\numberwithin{equation}{section}
\newtheorem{theorem}{Theorem}[section]
\newtheorem{lemma}{Lemma}[section]
\newtheorem{proposition}{Proposition}[section]
\newcommand*{\C}{\mathbb{C}}
\newcommand*{\R}{\mathbb{R}}
\newcommand*{\Z}{\mathbb{Z}}
\newcommand*{\N}{\mathbb{N}}
\newcommand{\comment}[1]{}
\title[Aspects of the screw function of $\zeta$]%
      {Aspects of the screw function corresponding to \\ the Riemann zeta-function} 
\author[M. Suzuki]{Masatoshi Suzuki}
\date{Version of \today}
\subjclass[]{
11M26 
42A82 
44A60 
}
\keywords{Riemann zeta-function, screw functions, Stieltjes moment problem}
\begin{abstract}
We introduce a screw function corresponding to the Riemann zeta-function 
and study its properties from various aspects. 
Typical results are several equivalent conditions 
for the Riemann hypothesis in terms of the screw function. 
One of them can be considered an analog of so-called Weil's positivity or Li's criterion.
In addition, we prove a few partial but unconditional results for such equivalents. 
\end{abstract}
\begin{document}

%
\section{Introduction} 
%

Let $\zeta(s)$ be the Riemann zeta-function 
and let $\xi(s)$ be the Riemann xi-function. 
The latter is an entire function defined by 
\[
\xi(s) = \frac{1}{2}s(s-1)\pi^{-s/2}\Gamma\left(\frac{s}{2}\right)\zeta(s)
\]
and satisfies two functional equations 
$\xi(s)=\xi(1-s)$ and $\xi(s)=\overline{\xi(\bar{s})}$, 
where $\Gamma(s)$ is the gamma-function and the bar denotes the complex conjugate.

Typical results of the present paper are several equivalent conditions 
for the Riemann hypothesis (RH, for short) which claims that 
all zeros of $\xi(s)$ lie on the critical line $\Re(s)=1/2$.
%
The core of the interrelation among 
all such equivalents is the function $\Psi(t)$ on $[0,\infty)$ defined by  
\begin{equation} \label{Eq_101}
\aligned 
\Psi(t)
& := 4(e^{t/2}+e^{-t/2}-2) - \sum_{n \leq e^t} \frac{\Lambda(n)}{\sqrt{n}}(t-\log n)  \\
&\quad 
+ \frac{t}{2}\left[ \frac{\Gamma'}{\Gamma}\left(\frac{1}{4}\right) - \log \pi \right] 
+ \frac{1}{4}\left( C - e^{-t/2}\Phi(e^{-2t},2,1/4) \right),  
\endaligned 
\end{equation}
where $\Lambda(n)$ is the von Mangoldt function defined by 
$\Lambda(n)=\log p$ if $n=p^k$ with $k \in \Z_{>0}$ 
and $\Lambda(n)=0$ otherwise,  
$C = \pi^2 + 8G$ with the Catalan constant 
$G = \sum_{n=0}^{\infty} (-1)^n(2n+1)^{-2}$, 
and $\Phi(z,s,a) = \sum_{n=0}^{\infty} (n+a)^{-s}z^n$ 
is the Hurwitz--Lerch zeta function. 
Formula \eqref{Eq_101} shows that $\Psi(t)$ is real-valued and continuous on $[0,\infty)$ 
and that $\Psi(0)=0$ by $\Phi(1,2,1/4) =\zeta(2,1/4)= C$. 
First, we state a few fundamental properties of $\Psi(t)$ that are unconditionally proven 
in Section \ref{Section_2} below. 

\begin{theorem} \label{Thm_1_1} 
The following holds for $\Psi(t)$ defined in \eqref{Eq_101}. 
\begin{enumerate}
\item The one-sided Fourier transform formula 
\begin{equation} \label{Eq_102}
\int_{0}^{\infty} \Psi(t) \, e^{izt} \, dt 
= -\frac{1}{z^2}\frac{\xi'}{\xi}\left(\frac{1}{2}-iz \right)
\end{equation}
holds if $\Im(z)>1/2$ , where $i=\sqrt{-1}$.  
\item The series representation
\begin{equation} \label{Eq_103}
\aligned 
\Psi(t)
& = \sum_{\gamma} \frac{1-\cos(\gamma t)}{\gamma^2} 
= \sum_{\gamma} \frac{1-e^{i\gamma t}}{\gamma^2} 
\endaligned 
\end{equation}
holds for every $t \geq 0$, 
where the sums in the middle and the right-hand side range over 
all zeros $\gamma$ of $\xi(1/2-iz)$ counting with multiplicity. 
\item The estimate $\Psi(t)  \ll \exp(t/2 -c\sqrt{t})$ holds for some constant $c>0$. 
\end{enumerate}
\end{theorem}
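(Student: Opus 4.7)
\medskip

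\noindent\textbf{Sketch of proof plan.} For (1), I start from the standard logarithmic decomposition
\[
\frac{\xi'}{\xi}(s)=\frac{1}{s-1}+\frac{1}{s}-\frac{\log\pi}{2}+\frac{1}{2}\frac{\Gamma'}{\Gamma}\!\left(\frac{s}{2}\right)+\frac{\zeta'}{\zeta}(s),
\]
specialize to $s=1/2-iz$, and use that $\Im(z)>1/2\Rightarrow\Re(s)>1$, so that $-\zeta'/\zeta(s)=\sum_n\Lambda(n)n^{-s}$ converges absolutely and all the improper integrals appearing in \eqref{Eq_102} are absolutely convergent. I then transform the four summands of $\Psi(t)$ in turn. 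The elementary integrals $\int_0^\infty e^{(a+iz)t}\,dt=-1/(iz+a)$ convert $4(e^{t/2}+e^{-t/2}-2)$ into $-4/(iz+1/2)-4/(iz-1/2)-8i/z$, which by partial fractions equals $[1/(s-1)+1/s]\cdot(-1/z^2)$. Swapping summation and integration and using $\int_0^\infty u\,e^{izu}\,du=-1/z^2$ (via $u=t-\log n$) turns the von Mangoldt sum into $-\zeta'/\zeta(1/2-iz)/z^2$. For the Hurwitz--Lerch piece I expand $\Phi(e^{-2t},2,1/4)=\sum_{n\ge 0}e^{-2nt}/(n+1/4)^2$, swap sum and integral, and apply the partial fractions
\[
\frac{1}{(n+\tfrac14)^2(n+\tfrac14-iz/2)}=\frac{4/z^2}{n+\tfrac14}-\frac{4/z^2}{n+\tfrac14-iz/2}+\frac{2i/z}{(n+\tfrac14)^2};
\]
summing over $n\ge 0$ gives $\Gamma'/\Gamma(1/4-iz/2)-\Gamma'/\Gamma(1/4)$ from the first two terms and $\sum_n(n+1/4)^{-2}=C$ from the third, which cancels $\int_0^\infty C\,e^{izt}\,dt$. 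Combining with the transform $-[\Gamma'/\Gamma(1/4)-\log\pi]/(2z^2)$ of the linear-in-$t$ term produces $[\log\pi-\Gamma'/\Gamma(1/4-iz/2)]/(2z^2)$, completing the match with $-\xi'/\xi(1/2-iz)/z^2$.

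For (2), I apply Fourier inversion to \eqref{Eq_102}: for $\sigma>1/2$ and $t>0$,
\[
\Psi(t)=\frac{1}{2\pi}\int_{\Im z=\sigma}\left(-\frac{1}{z^2}\frac{\xi'}{\xi}(1/2-iz)\right)e^{-izt}\,dz,
\]
and shift the contour down to $\Im z=-T$ with $T\to\infty$. Since $\xi$ is entire, the integrand is meromorphic with simple poles only at each zero $\gamma$ of $f(z):=\xi(1/2-iz)$ (residue $-ie^{-i\gamma t}/\gamma^2$) and at $z=0$---only simple because $\xi'/\xi(1/2)=0$ from $\xi(s)=\xi(1-s)$---where differentiating the Hadamard factorization of $f$ yields residue $i(\xi'/\xi)'(1/2)=i\sum_\gamma 1/\gamma^2$. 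In my view the \emph{main obstacle} of the plan is justifying that the horizontal segments and the lower line vanish in the limit: this requires standard density-of-zeros estimates together with Hadamard-product bounds to control $\xi'/\xi(1/2-iz)$ uniformly in horizontal strips avoiding the zeros (where $\xi'/\xi(s)\ll\log^2|s|$), after which the $1/z^2$ decay in $z$ and the exponential decay $|e^{-izt}|=e^{(\Im z)t}\to 0$ as $\Im z\to-\infty$ suffice. Collecting residues with the clockwise orientation factor $-i$ yields $\Psi(t)=\sum_\gamma 1/\gamma^2-\sum_\gamma e^{-i\gamma t}/\gamma^2$; invoking the evenness $f(-z)=f(z)$ (equivalent to $\xi(s)=\xi(1-s)$), which pairs zeros as $\gamma\leftrightarrow-\gamma$, rewrites this as $\Psi(t)=\sum_\gamma(1-e^{i\gamma t})/\gamma^2$, and taking real parts gives the cosine form.

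For (3), I exploit the series representation just obtained: $|\Psi(t)|\le\sum_\gamma|1-e^{i\gamma t}|/|\gamma|^2\le 2\sum_\gamma e^{|\Im\gamma|t}/|\gamma|^2$. The classical de la Vall\'ee Poussin zero-free region for $\zeta$ translates to $|\Im\gamma|\le 1/2-c/\log(|\gamma|+2)$, so $e^{|\Im\gamma|t}\le e^{t/2}\cdot e^{-ct/\log(|\gamma|+2)}$. Splitting the sum at $|\gamma|=e^{\sqrt t}$: for $|\gamma|\le e^{\sqrt t}$ the exponential factor is at most $e^{-c\sqrt t}$ while $\sum 1/|\gamma|^2$ is finite; for $|\gamma|>e^{\sqrt t}$ the classical density bound $N(T)\ll T\log T$ gives $\sum 1/|\gamma|^2\ll\sqrt t\,e^{-\sqrt t}$, so the factor $e^{t/2}$ still leaves $e^{t/2-c'\sqrt t}$. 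Combining these yields $\Psi(t)\ll e^{t/2-c\sqrt t}$, completing the plan.
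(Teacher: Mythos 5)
Your proof of (1) matches the paper's in substance: the same decomposition of $\xi'/\xi$ into the pole, archimedean, and Euler-product pieces, each integrated against $e^{izt}$ for $\Im(z)>1/2$; the only difference is cosmetic bookkeeping (your partial-fraction identity for the Hurwitz--Lerch sum versus the paper's direct evaluation of $\int_0^\infty\frac{1-e^{-2(n+1/4)t}}{2(n+1/4)^2}e^{izt}\,dt$ and appeal to the series for $\Gamma'/\Gamma$).

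For (2) and (3) you take genuinely different routes, both of which are correct. For (2), the paper avoids contour shifting entirely: it differentiates the Hadamard factorization of $\xi(1/2-iz)$ to get $\frac{\xi'}{\xi}(1/2-iz)=i\sum_\gamma\bigl(\frac{1}{z-\gamma}+\frac{1}{\gamma}\bigr)$ (with the linear Hadamard constant shown to vanish), observes the termwise identity $-\frac{i}{z^2}\bigl(\frac{1}{z-\gamma}+\frac{1}{\gamma}\bigr)=\int_0^\infty\frac{1-e^{-i\gamma t}}{\gamma^2}e^{izt}\,dt$, sums using absolute convergence, and invokes uniqueness of the one-sided Fourier transform. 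Your residue computation arrives at the same formula, and the residue bookkeeping you describe (simple poles at the $\gamma$'s with residue $-ie^{-i\gamma t}/\gamma^2$, only a simple pole at $z=0$ because $\xi'/\xi(1/2)=0$, residue $i\sum_\gamma\gamma^{-2}$, orientation factor) is right. But, as you correctly flag, the contour-shift version carries the extra burden of bounding $\xi'/\xi$ on horizontal lines avoiding zeros and on the bottom line $\Im z=-T$; this is standard but not free, and it is precisely what the paper's route sidesteps. For (3), the paper proves Proposition \ref{Prop_2_1}, a sharp Perron-type asymptotic $4e^{t/2}+O(e^{t/2-c\sqrt t})$ for the Chebyshev-like sum $\sum_{n\le e^t}\frac{\Lambda(n)}{\sqrt n}(t-\log n)$ by pushing the contour into the de la Vall\'ee Poussin zero-free region, then reads off the bound on $\Psi$ from \eqref{Eq_101}. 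You instead bound the series from (2) directly: $|\Psi(t)|\le 2\sum_\gamma e^{|\Im\gamma|t}/|\gamma|^2$, inserting the zero-free region in the form $|\Im\gamma|\le\frac12-c/\log(|\gamma|+3)$, splitting at $|\gamma|=e^{\sqrt t}$, and using $N(T)\ll T\log T$ for the tail. This is shorter than the paper's argument and yields the stated bound on $\Psi(t)$ just as well; the paper's more explicit route yields, as a by-product, the asymptotic for the prime sum itself. Both arguments are sound.
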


In Theorem \ref{Thm_1_1} and in what follows, 
we use the Vinogradov symbol $f(x) \ll g(x)$ and the Landau symbol $f(x)=O(g(x))$   
as symbols to mean that there exists a positive constant $M$ such that 
$|f(x)| \leq M g(x)$ holds for prescribed range of $x$. 
\medskip

By the first equality in \eqref{Eq_103}, the function $\Psi(t)$ is naturally extended 
to an even function on the real line. 
Therefore, 
we henceforth identify $\Psi(t)$ with that extended even function, that is, 
we understand by replacing $t$ with $-t$ in the right-hand side of \eqref{Eq_101} 
when $t$ is negative. 
Also, by Theorem \ref{Thm_1_1}, any of \eqref{Eq_101}, \eqref{Eq_103}, or 
the Fourier inversion of \eqref{Eq_102} can be chosen as the definition of $\Psi(t)$, 
but in this paper, we chose definition \eqref{Eq_101} including prime numbers.
\medskip

Formulas \eqref{Eq_102} and \eqref{Eq_103} suggest that 
the function $\Psi(t)$ is related to the class of screw functions 
introduced by M. G. Kre\u{\i}n. 
For $0<a \leq \infty$, he introduced the class $\mathcal{G}_a$ 
consisting of all continuous functions $g(t)$ on $(-2a,2a)$ such that 
$g(-t) = \overline{g(t)}$ (hermitian) 
and the kernel 
\begin{equation} \label{Eq_104}
G_g(t,u):=g(t-u)-g(t)-g(-u)+g(0).  
\end{equation}
is non-negative definite on $(-a,a)$, that is,  
\begin{equation} \label{Eq_105}
\sum_{i,j=1}^{n} G_g(t_i,t_j) \,  \xi_i \overline{\xi_j} \,\geq\, 0
\end{equation}
for all $n \in \N$, $\xi_i \in \C$, and $|t_i| < a$, $(i = 1, 2, . . . , n)$. 
(In literature, a kernel satisfying \eqref{Eq_105} is often referred 
to as a positive definite kernel or semi-positive definite kernel,
but in this paper, we use the term above.) 
The members of $\mathcal{G}_a$ are called {\it screw functions} 
because of their relationship with screw arcs in Hilbert spaces 
(\cite[\S12]{KrLa14}). 

Let $\mathcal{N}$ be the Nevanlinna class
that consists of analytic functions in the upper half-plane $\C_+=\{z\,|\, \Im(z)>0\}$ 
mapping $\C_+$ into $\C_+ \cup \R$. 
(Note that $\mathcal{N}$ is also called the class of Pick functions, 
or R functions, 
or Herglotz functions 
depending on the literature.)   
Kre\u{\i}n--Langer \cite[Satz 5.9]{KrLa77} (\cite[Prop. 5.1]{KrLa85}) 
showed that the equality 
\begin{equation*} 
\int_{0}^{\infty} g(t)\,e^{izt} \, dt = -\frac{i}{z^2}Q(z), \quad \Im(z) > h
\end{equation*}
for some $h \geq 0$ 
establishes a bijective correspondence 
between all functions $g \in \mathcal{G}_\infty$ with $g(0)=0$ 
and all functions $Q \in \mathcal{N}$ with the property 
\begin{equation} \label{Eq_106}
\lim_{y \to +\infty} \frac{Q(iy)}{y} =0.
\end{equation}
On the other hand, 
J. C. Lagarias~\cite[(1.5)]{La99} 
proved that the RH is true if and only if
\begin{equation*}
\Im\left[ i\, \frac{\xi'}{\xi}\left(\frac{1}{2}-iz\right) \right] >0 \quad \text{when} \quad \Im(z)>0. 
\end{equation*}
The latter means that the function 
\begin{equation} \label{Eq_107}
Q_\xi(z):=i\, \frac{\xi'}{\xi}\left(\frac{1}{2}-iz\right)
\end{equation}
belongs to $\mathcal{N}$. 
It is easy to confirm that 
$Q_\xi(z)$ satisfies \eqref{Eq_106} unconditionally 
by Dirichlet series expansion of $(\zeta'/\zeta)(s)$ 
and an asymptotic expansion of $(\Gamma'/\Gamma)(s)$ as $|s| \to \infty$ 
(see \eqref{Eq_410}). 
Therefore, if we assume that the RH is true, 
$Q_\xi(z) $ belongs to $\mathcal{N}$ and satisfies \eqref{Eq_106}, 
thus there exists a corresponding screw function $g \in \mathcal{G}_\infty$. 
Such $g$ must be equal to the function defined 
by
\begin{equation} \label{Eq_108}
g(t):= -\Psi(t)
\end{equation}
from the Fourier integral formula \eqref{Eq_102} 
and the uniqueness of the Fourier transform. 
Conversely, assuming that this $g(t)$ is a screw function, 
the RH holds from \eqref{Eq_102} 
and the above result of Kre\u{\i}n--Langer. 

Hence we obtain the following equivalent condition 
for the RH which is the starting point 
of other equivalent conditions for the RH in the present paper.

\begin{theorem} \label{Thm_1_2}
The RH is true if and only if $g(t)$ defined by \eqref{Eq_108} 
is a screw function on $\R=(-\infty,\infty)$. 
\end{theorem}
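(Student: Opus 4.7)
The plan is to read Theorem \ref{Thm_1_2} as the composition of two equivalences joined by a bijection: Lagarias's criterion (the RH iff $Q_\xi \in \mathcal{N}$) on one side, the Kre\u{\i}n--Langer correspondence between $\mathcal{G}_\infty$-functions with $g(0)=0$ and Nevanlinna functions $Q$ satisfying \eqref{Eq_106} on the other, linked by the Fourier transform formula \eqref{Eq_102}. What the proof must actually do is verify that $-\Psi$ and $Q_\xi$ are precisely the pair matched by this bijection.

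First I would dispose of the standing hypotheses. The identity $\Phi(1,2,1/4)=\zeta(2,1/4)=C$ gives $\Psi(0)=0$, and Theorem \ref{Thm_1_1} already provides that $\Psi$ is real, continuous, and of growth $\ll e^{t/2-c\sqrt{t}}$; the last ensures absolute convergence of $\int_0^\infty (-\Psi(t))\,e^{izt}\,dt$ on $\Im(z)>1/2$. The growth condition \eqref{Eq_106} for $Q_\xi$ I will verify unconditionally from the Dirichlet series for $\zeta'/\zeta$ and Stirling's expansion for $\Gamma'/\Gamma$, as signalled around \eqref{Eq_410}. With these in place, rewriting \eqref{Eq_102} as
\[
\int_0^\infty (-\Psi(t))\,e^{izt}\,dt
= \frac{1}{z^2}\frac{\xi'}{\xi}\!\left(\tfrac{1}{2}-iz\right)
= -\frac{i}{z^2}Q_\xi(z)
\]
shows that $-\Psi$ and $Q_\xi$ stand in exactly the relation Kre\u{\i}n--Langer require.

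With the matching identified, the two directions are immediate. If the RH holds, Lagarias yields $Q_\xi \in \mathcal{N}$, Kre\u{\i}n--Langer produces some $g_0 \in \mathcal{G}_\infty$ with $g_0(0)=0$ and Fourier transform $-iQ_\xi(z)/z^2$, and uniqueness of the Fourier transform on the common half-plane of absolute convergence forces $g_0 = -\Psi$, so $-\Psi$ is a screw function. Conversely, if $-\Psi \in \mathcal{G}_\infty$, then Kre\u{\i}n--Langer associates to it some $Q \in \mathcal{N}$ obeying \eqref{Eq_106}; the same Fourier identity forces $Q = Q_\xi$, and Lagarias returns the RH.

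I expect no real obstacle here: the theorem is essentially a bookkeeping identification of the objects on either side of a known correspondence. The only step that is more than a citation is checking that \eqref{Eq_102}, together with the absolute-convergence domain furnished by Theorem \ref{Thm_1_1}(3), places $(-\Psi,Q_\xi)$ in the regime where Kre\u{\i}n--Langer's bijection applies and the uniqueness invoked above is justified; this, and the sign bookkeeping between $\xi'/\xi$ and $Q_\xi$, are the only points where any care is needed.
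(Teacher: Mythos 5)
Your proof is correct and follows essentially the same route as the paper: the paper likewise deduces Theorem~\ref{Thm_1_2} by composing Lagarias's criterion with the Kre\u{\i}n--Langer bijection via the one-sided Fourier formula \eqref{Eq_102}, invoking the unconditional growth bound of Theorem~\ref{Thm_1_1}(3) and Fourier-transform uniqueness to identify $-\Psi$ as the screw function paired with $Q_\xi$. The sign bookkeeping $(\xi'/\xi)(1/2-iz)=-iQ_\xi(z)$ is handled correctly, so nothing is missing.
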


Henceforth throughout this paper, 
$g(t)$ is the function defined by \eqref{Eq_101} and \eqref{Eq_108}, 
and $G_g(t,u)$ is the kernel defined by \eqref{Eq_104} for this $g(t)$, 
unless stated otherwise. 
We call $g(t)$ the {\it screw function of the Riemann zeta-function} 
after Theorem \ref{Thm_1_2}, 
although it is an abuse of words in a strict sense. 
The series representation of $g(t)$ obtained from \eqref{Eq_103} 
is nothing but an integral representation of a screw function 
(\cite[Theorem 5.1 and (7.11)]{KrLa14}) and implies 
\begin{equation} \label{Eq_109}
G_g(t,u) 
 = \sum_{\gamma} \frac{(e^{i\gamma t}-1)(e^{-i\gamma u}-1)}{\gamma^2}
\end{equation}
(cf. the second line of the proof of \cite[Theorem 5.1]{KrLa14}). 
The kernel $G_g(t,u)$ is non-negative on $\R^2$ under the RH, 
but it can be shown unconditionally that 
$G_g(t,u)$ is non-negative if $|t| < a$ and $|u| < a$ for small $a>0$ 
(see Section \ref{Section_4}). 
\medskip

As a main application of Theorem \ref{Thm_1_2}, 
we obtain a variant of Weil's famous 
criterion for the RH by the positivity of the Weil distribution 
(see Section \ref{subsec_Weil}) in terms of screw functions 
as follows. 
For each $0<a \leq \infty$, 
we define the hermitian form $\langle \cdot,\cdot \rangle_{G_g,a}$ 
for functions supported in $[-a,a]$ by 
\begin{equation} \label{Eq_110}
\langle \phi_1,\phi_2 \rangle_{G_g,a}
:= \int_{-a}^{a}\int_{-a}^{a}G_g(t,u)\phi_1(u)\overline{\phi_2(t)} \, dudt
\end{equation}
when the right-hand side is absolutely convergent. 
We also define the space 
\begin{equation} \label{Eq_111}
\mathfrak{C}_0(a) =\left\{ \phi \in C_c^\infty(\R)\,\left|~{\rm supp}\,\phi \subset[-a,a],
~\int_{-a}^{a} \phi(t)\, dt=0 \right.\right\}, 
\end{equation}
where $C_c^{\infty}(\R)$ 
is the space of all smooth functions on the real line having compact support. 
Note that $\mathfrak{C}_0(a)$ is not the class of continuous functions on $(-a,a)$.

\begin{theorem}  \label{Thm_1_3}
The RH is true if and only if the hermitian form
$\langle \cdot,\cdot \rangle_{G_g,a}$ 
is non-negative definite on $\mathfrak{C}_0(a)$, 
that is, $\langle \phi,\phi \rangle_{G_g,a} \geq  0$ 
for all $\phi \in \mathfrak{C}_0(a)$ for every $0<a<\infty$. 
Moreover, assuming that the RH is true, 
$\langle \cdot,\cdot \rangle_{G_g,a}$ 
is positive definite on $L^2(-a,a)$, 
that is, 
$\langle \phi,\phi \rangle_{G_g,a}>0$ 
for all non-zero $\phi \in L^2(-a,a)$ for every $0<a<\infty$. 
\end{theorem}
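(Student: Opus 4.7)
The plan is to reduce both parts to the screw-function correspondence of Theorem~\ref{Thm_1_2} together with the expansion \eqref{Eq_109} of the kernel. The key preliminary observation is that on $\mathfrak{C}_0(a)$ the constraint $\int\phi=0$ annihilates the three ``boundary'' terms in $G_g(t,u)=g(t-u)-g(t)-g(-u)+g(0)$, leaving
\[
\langle\phi,\phi\rangle_{G_g,a}=\int_{-a}^{a}\!\int_{-a}^{a}g(t-u)\,\phi(u)\overline{\phi(t)}\,du\,dt,\qquad \phi\in\mathfrak{C}_0(a).
\]

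For the first equivalence with the RH, I would argue that non-negativity of this integral for all $a>0$ and all $\phi\in\mathfrak{C}_0(a)$ is the same as \emph{conditional positive definiteness} of $g$ on $\R$, meaning $\sum_{i,j}g(t_i-t_j)\xi_i\overline{\xi_j}\ge 0$ whenever $\sum_i\xi_i=0$. The integral-to-discrete direction is a Riemann-sum approximation; for the converse, given finite data $(t_i,\xi_i)$ with $\sum\xi_i=0$, the test function $\phi=\sum_i\xi_i\eta_i$ built from narrow unit-mass bumps $\eta_i$ centered at the $t_i$ lies in $\mathfrak{C}_0(a)$ exactly (because $\int\phi=\sum\xi_i=0$) and its integral tends to the discrete sum as the bumps shrink, using continuity of $g$ via Theorem~\ref{Thm_1_1}. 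A direct algebraic identity (adjoin the point $t_0=0$ with $\xi_0=-\sum\xi_i$) shows in turn that conditional positive definiteness of $g$ is equivalent to the unrestricted non-negative definiteness of the kernel $G_g$ on $\R\times\R$ in the sense of \eqref{Eq_105}, i.e.\ $g\in\mathcal{G}_\infty$, and Theorem~\ref{Thm_1_2} closes the loop to RH.

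For the moreover assertion, under RH every zero $\gamma$ is real, so substituting \eqref{Eq_109} into the defining integral and exchanging integration with the absolutely convergent sum (justified by $\sum_\gamma 1/\gamma^2<\infty$ together with $|\hat\phi(\gamma)|\le\|\phi\|_1$ on $\R$) yields, for every $\phi\in L^{2}(-a,a)$,
\[
\langle\phi,\phi\rangle_{G_g,a}=\sum_{\gamma}\frac{|\hat\phi(\gamma)-c|^{2}}{\gamma^{2}},\qquad c:=\int_{-a}^{a}\phi(t)\,dt,\ \hat\phi(z):=\int_{-a}^{a}e^{-izt}\phi(t)\,dt.
\]
This is manifestly non-negative, and vanishing forces $\hat\phi(\gamma)=c$ at every ordinate $\gamma$ of a non-trivial zero.

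The main obstacle is concluding $\phi=0$ from the last condition. Here I would invoke the Paley--Wiener theorem: $\hat\phi-c$ is entire of exponential type at most $a$ and square-integrable on $\R$, so by Jensen's formula its zero-counting function satisfies $n(T)=O(aT)$. The Riemann--von Mangoldt formula $N(T)\sim(T/2\pi)\log(T/2\pi)$ for the zeros of $\xi$ dominates any such linear bound once $T$ is large enough, so $\hat\phi-c\equiv 0$; since a non-zero constant is not in $L^{2}(\R)$, this forces $c=0$ and $\phi=0$.
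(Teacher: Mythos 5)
Your proposal is essentially correct and takes a genuinely different route to the sufficiency direction. The paper proves ``non-negativity on $\mathfrak{C}_0(a)\Rightarrow$ RH'' via Proposition~\ref{Prop_3_1}, which is the exact algebraic identity $\langle D\psi_1,D\psi_2\rangle_{G_g,a}=W(\psi_1\ast\widetilde{\psi_2})$ together with the bijections $D$ and $I_0^{(a)}$ between $C(a)$ and $\mathfrak{C}_0(a)$; this reduces the hypothesis to Weil's positivity criterion~\eqref{Eq_304}, a number-theoretic input. You instead pass from the integral form to the discrete conditional positive definiteness of $g$ by a bump-function approximation (using the continuity guaranteed by Theorem~\ref{Thm_1_1}), convert that into unrestricted non-negative definiteness of $G_g$ by the standard trick of adjoining $t_0=0$ with $\xi_0=-\sum\xi_i$ (valid since $g(0)=0$), and then appeal only to Theorem~\ref{Thm_1_2}. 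Your path is more self-contained --- it needs one ingredient (the Kre\u{\i}n--Langer screw-function correspondence) rather than also the Weil criterion --- but it trades the paper's exact identity for an approximation/density argument. For the positive-definiteness statement under RH you recover the paper's computation~\eqref{Eq_301}--\eqref{Eq_302} (up to a harmless sign convention in $\widehat\phi$), and where the paper cites Lemma~\ref{Lem_2_1} you inline the same Jensen-versus-Riemann--von-Mangoldt count. Two small expository blemishes: your labels ``integral-to-discrete'' and ``converse'' are swapped relative to the arguments you then give, and the clause ``entire of exponential type at most $a$ and square-integrable on $\R$'' applied to $\widehat\phi-c$ is inaccurate when $c\neq 0$ --- but neither matters, since the $O(T)$ zero-count you need follows from exponential type alone, and the $L^2$ property of $\widehat\phi$ (not of $\widehat\phi-c$) is what finally forces $c=0$ once $\widehat\phi\equiv c$.
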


See Section \ref{Section_3} for details and more on the Weil distribution 
and Weil's positivity criterion. 
In addition to Theorem \ref{Thm_1_3}, 
we obtain the following analog of 
the equivalent of the RH by H. Yoshida \cite{Yo92} 
(see also Section \ref{subsec_Yoshida})
described by the nondegeneracy of hermitian forms. 

\begin{theorem}  \label{Thm_1_4}
The RH is true if and only if the hermitian form
$\langle \cdot,\cdot \rangle_{G_g,a}$ 
is non-degenerate on $L^2(-a,a)$ for every $0<a<\infty$. 
The latter condition is equivalent that the integral operator 
$\mathsf{G}_g[a]: L^2(-a,a) \to L^2(-a,a)$ defined by 
\begin{equation} \label{eq_110}
\mathsf{G}_g[a]:~\phi(t) ~\mapsto~ \mathbf{1}_{[-a,a]}(t) \int_{-a}^{a} G_g(t,u) \phi(u) \, du,
\end{equation}
does not have zero as an eigenvalue for every $0<a<\infty$, 
where $\mathbf{1}_A(t)$ is the characteristic function of a subset $A \subset \R$. 
\end{theorem}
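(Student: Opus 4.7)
The statement splits into two: an operator-theoretic reformulation of non-degeneracy, and the equivalence of that condition with the Riemann hypothesis. For the reformulation, observe that the hermitian symmetry $g(-t) = \overline{g(t)}$ gives $G_g(u,t) = \overline{G_g(t,u)}$, so $\mathsf{G}_g[a]$ is a self-adjoint operator on $L^2(-a,a)$ (indeed Hilbert--Schmidt, since Theorem~\ref{Thm_1_1}(3) provides the bounds needed to place $G_g$ in $L^2([-a,a]^2)$). The identity $\langle \phi_1,\phi_2 \rangle_{G_g,a} = (\mathsf{G}_g[a]\phi_1, \phi_2)_{L^2(-a,a)}$ then equates degeneracy of the form with the existence of a nonzero $\phi \in \ker \mathsf{G}_g[a]$, that is, with $0$ being an eigenvalue. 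For the ``only if'' direction (RH implies non-degeneracy), Theorem~\ref{Thm_1_3} already gives the stronger conclusion that $\langle \cdot, \cdot \rangle_{G_g,a}$ is positive definite on $L^2(-a,a)$, which implies non-degeneracy trivially.

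For the ``if'' direction the plan is to argue by contrapositive: assuming RH fails, I will exhibit $a>0$ and a nonzero $\phi \in L^2(-a,a)$ with $\mathsf{G}_g[a]\phi = 0$. The approach will be a spectral-flow argument for the family $\{\mathsf{G}_g[a]\}_{a>0}$ of compact self-adjoint operators, conveniently regarded as acting on the fixed space $L^2(\R)$ after trivial extension of the kernel to $\R^2$. Two facts feed in. First, the kernel $G_g$ is unconditionally non-negative definite on $[-a,a]^2$ for all sufficiently small $a>0$ (as stated after \eqref{Eq_109} and to be proved in Section~\ref{Section_4}), so $\mathsf{G}_g[a]$ is positive semi-definite for such $a$, giving $n_-(a) = 0$ where $n_-(a)$ denotes the number of strictly negative eigenvalues. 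Second, the failure of RH together with the ``only if'' half of Theorem~\ref{Thm_1_3} supplies $a_0 > 0$ and $\phi_0 \in \mathfrak{C}_0(a_0) \subset L^2(-a_0,a_0)$ with $\langle \phi_0, \phi_0 \rangle_{G_g,a_0} < 0$, forcing $n_-(a_0) \geq 1$. Since $a \mapsto \mathsf{G}_g[a]$ is continuous in Hilbert--Schmidt norm, the eigenvalues vary continuously by the min--max principle, and the integer-valued function $n_-(a)$ can jump only when an eigenvalue crosses $0$ from above. Hence there is a critical value $a^* \in (0,a_0]$ at which $0$ is an eigenvalue of $\mathsf{G}_g[a^*]$, yielding the required null vector.

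The hard part will be making the spectral-flow argument rigorous, since $0$ always sits in the spectrum of a compact infinite-rank operator as an accumulation point of eigenvalues; one must distinguish a genuine eigenvalue crossing $0$ from mere accumulation from above. I expect the standard min--max characterization of the $k$th positive and $k$th negative eigenvalues, each continuous in $a$ under Hilbert--Schmidt perturbation, to resolve this. A secondary technical point concerns the convergence of the spectral expansion $\mathsf{G}_g[a]\phi(t) = \sum_\gamma \gamma^{-2}(e^{i\gamma t} - 1)(\hat\phi(\gamma) - \hat\phi(0))$, with $\hat\phi(z) = \int_{-a}^a \phi(u) e^{-izu}\,du$ entire of exponential type $\leq a$: the series \eqref{Eq_109} converges only conditionally when complex zeros appear (the case where RH fails), so one must group the quartet $\{\pm\gamma_0, \pm\bar\gamma_0\}$ of zeros associated with each off-line zero of $\xi$ carefully before applying Fubini.
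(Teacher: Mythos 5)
Your reformulation of non-degeneracy as the kernel of $\mathsf{G}_g[a]$, and your ``only if'' direction via Theorem~\ref{Thm_1_3}, both match the paper. The gap is in the ``if'' direction, and it is a genuine one: the spectral-flow argument you sketch does not close, and the min--max principle you invoke to rescue it does not in fact do so.

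The obstruction is exactly the one you half-identify and then wave away. For a compact self-adjoint operator of infinite rank, $0$ is always in the spectrum as the accumulation point of eigenvalues, so the condition ``$0$ is an eigenvalue'' is not detected by continuity of $\lambda_{\min}(a)$ or of any fixed min--max level. Concretely, take $a_0 = \max A$ where $A = \{a : \mathsf{G}_g[a] \geq 0\}$ (closed, bounded under the failure of RH, as in the paper). For $a_n \downarrow a_0$ choose unit eigenvectors $\phi_n$ with $\mathsf{G}_g[a_n]\phi_n = \lambda_n\phi_n$, $\lambda_n < 0$. Passing to a weak limit $\phi_n \rightharpoonup \phi$ and using compactness of $\mathsf{G}_g[a_0]$ gives $\langle \mathsf{G}_g[a_0]\phi,\phi\rangle = 0$ with $\mathsf{G}_g[a_0] \geq 0$; but nothing prevents $\phi = 0$. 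The $\phi_n$ may spread out in Fourier modes and converge weakly to zero, so no null vector of $\mathsf{G}_g[a_0]$ is produced. Min--max continuity of the $k$-th eigenvalue from the top or from the bottom is real, but it tracks eigenvalues that already exist on both sides; it does not rule out negative eigenvalues emerging ``from the accumulation point at $0$'' without any individual eigenvalue transversally crossing zero, nor does it force the limiting eigenvector to be nontrivial.

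The paper (Section~\ref{Section_5}, following Yoshida's strategy) closes exactly this gap by a finite-dimensional reduction. Theorem~\ref{Thm_4_3} gives a coercivity bound $\langle \phi,\phi\rangle_{G_g,a} \geq \mu\,\|\Phi_1(\phi,\cdot)\|_{L^2}^2$ on a closed subspace $\mathcal{V}_N(a)$ of codimension $2N+3$, \emph{uniformly} in $a$ near $a_0$. This kills the weak-escape phenomenon on $\mathcal{V}_N(a)$. The form is then decomposed as an orthogonal direct sum over $\mathcal{V}_N(a) \oplus \mathcal{A}_N(a)$ with $\dim\mathcal{A}_N(a) = 2N+3$, and the sign question reduces to a $(2N+3)\times(2N+3)$ Gram matrix \eqref{Eq_504}. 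Showing that the entries $\langle v_m[a], v_n[a]\rangle_{G_g,a}$ of the projection part are continuous in $a$ is the genuine analytic content, occupying Sections~\ref{Section_3_2}--\ref{Section_3_3}: it requires the explicit Gram--Schmidt formulas \eqref{Eq_505}--\eqref{Eq_507}, the Fourier-coefficient estimates \eqref{Eq_509}--\eqref{Eq_511} (the last of which needs the Weil explicit formula applied to the hand-built test functions $\phi_{1,k},\phi_{2,k}$), and operator-norm bounds \eqref{Eq_512} for the truncated change-of-basis matrices uniformly in $a$. Once the finite matrix is continuous in $a$, ordinary finite-dimensional spectral continuity yields degeneracy at $a = a_0$, which is what your naive flow argument was trying to get for free. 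So the missing idea in your proposal is precisely this uniform coercivity on a cofinite subspace and the resulting reduction to a continuous finite matrix.
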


The advantage of Theorems \ref{Thm_1_3} and \ref{Thm_1_4} 
is that hermitian forms $\langle \cdot,\cdot \rangle_{G_g,a}$ are represented 
by an integral operator with a continuous kernel acting on usual $L^2$-spaces. 
This makes the strategy of Yoshida \cite{Yo92} and E. Bombieri \cite{Bo01} 
for the RH via the Weil distribution analytically more straightforward and simple.
\medskip

It is well known that the continuity of the kernel 
is not sufficient to conclude that the corresponding 
integral operator is of trace class.  
If we assume that the RH is true, $\mathsf{G}_g[a]$ is a positive definite 
Hilbert--Schmidt operator for every $0<a < \infty$. 
Therefore, it is a trace class operator by Mercer's theorem. 
Furthermore, the traceability of $\mathsf{G}_g[a]$ 
does not depend on the definiteness of $\mathsf{G}_g[a]$ as follows. 
\bigskip

\begin{theorem} \label{Thm_1_5}
For each $0<a<\infty$, $\mathsf{G}_g[a]$ is a trace class operator 
unconditionally. 
\end{theorem}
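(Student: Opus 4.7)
The plan is to realize $\mathsf{G}_g[a]$ as a trace-norm absolutely convergent sum of rank-one operators via the zero-sum representation \eqref{Eq_109}. For each zero $\gamma$ of $\xi(1/2-iz)$, set
\[
u_\gamma(t) := \frac{e^{i\gamma t}-1}{\gamma}, \qquad v_\gamma(t) := \frac{e^{-i\gamma t}-1}{\gamma},
\]
so that the $\gamma$-th summand in \eqref{Eq_109} is exactly the kernel of the rank-one operator $u_\gamma \otimes v_\gamma$ on $L^2(-a,a)$, whose trace norm equals $\|u_\gamma\|_{L^2(-a,a)} \, \|v_\gamma\|_{L^2(-a,a)}$. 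Hence it suffices to show that $\sum_\gamma \|u_\gamma\|\,\|v_\gamma\| < \infty$, as then the partial sums converge in trace norm to a trace class operator, and the uniform convergence below identifies that limit with $\mathsf{G}_g[a]$.

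The first step is the strip bound on the imaginary parts of the zeros. Since $\gamma$ is a zero of $\xi(1/2-iz)$, the point $s = 1/2-i\gamma$ is a zero of $\xi$; because all zeros of $\xi$ lie in the critical strip $0 \le \Re(s) \le 1$, we get $|\Im(\gamma)| \le 1/2$. Writing $\gamma = \beta + i\delta$ with $|\delta| \le 1/2$, a direct calculation yields
\[
\|u_\gamma\|_{L^2(-a,a)}^2
= \frac{1}{|\gamma|^2}\int_{-a}^{a} \bigl( e^{-2\delta t} - 2 e^{-\delta t}\cos(\beta t) + 1 \bigr)\,dt
\le \frac{C_a}{|\gamma|^2},
\]
with $C_a := 2a(e^{a/2}+1)^2$ depending only on $a$, and the same bound for $\|v_\gamma\|_{L^2(-a,a)}^2$. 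Combining this with the classical summability $\sum_\gamma 1/|\gamma|^2 < \infty$ (a consequence of the Riemann--von Mangoldt zero-counting asymptotic $N(T) \sim (T/2\pi)\log(T/(2\pi e))$, noting that the zeros counted here are symmetric and confined to $|\Im\gamma| \le 1/2$), we conclude
\[
\sum_\gamma \|u_\gamma\|_{L^2(-a,a)}\,\|v_\gamma\|_{L^2(-a,a)} \,\le\, C_a \sum_\gamma \frac{1}{|\gamma|^2} \,<\, \infty.
\]

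The main (and rather mild) obstacle is to verify that the trace-norm limit of $\sum_\gamma u_\gamma \otimes v_\gamma$ coincides with the integral operator $\mathsf{G}_g[a]$ defined by the kernel $G_g(t,u)$. This reduces to showing that the series \eqref{Eq_109} converges to $G_g(t,u)$ uniformly on $[-a,a]^2$, which is immediate from the same bound $|u_\gamma(t)\,v_\gamma(u)| \le C_a'/|\gamma|^2$ on $[-a,a]^2$. Uniform convergence implies that the limit operator has $G_g(t,u)$ as its integral kernel, so the two operators agree. The argument makes no use of the RH: the strip bound $|\Im\gamma| \le 1/2$ is all that is needed to control the exponential factors $e^{-\delta t}$ arising from potentially off-critical zeros.
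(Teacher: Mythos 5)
Your proof is correct, and it takes a genuinely different route from the paper's. You work entirely on the zero side: starting from the representation \eqref{Eq_109}, you write $\mathsf{G}_g[a]$ as a series of rank-one operators $u_\gamma\otimes v_\gamma$, observe that the trace norm of each term is $\|u_\gamma\|_{L^2(-a,a)}\|v_\gamma\|_{L^2(-a,a)}\ll_a |\gamma|^{-2}$ (uniformly, using the unconditional strip bound $|\Im\gamma|\le 1/2$ to control the exponentials on $[-a,a]$), and then invoke the classical unconditional summability $\sum_\gamma|\gamma|^{-2}<\infty$ to get absolute convergence in trace norm; the identification of the trace-norm limit with $\mathsf{G}_g[a]$ via uniform kernel convergence is standard and you state it correctly. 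The paper's proof instead works on the prime/archimedean side of \eqref{Eq_101}: it splits $G_g=G_0+G_1+G_\infty$ according to the three pieces $g_0,g_1,g_\infty$ in \eqref{Eq_408}, applies the H\"older-type trace-class criterion (Gohberg--Goldberg--Krupnik, exponent $\alpha>1/2$) to $G_0$ and $G_1$, and then isolates the $2|t|\log(1/|t|)$ singularity inside $g_\infty$, handling that piece with a separate derivative-in-the-mean criterion of Gohberg--Kre\u{\i}n. Both arguments are unconditional. Yours is shorter and conceptually cleaner once \eqref{Eq_109} is available, reducing everything to $\sum 1/|\gamma|^2<\infty$; the paper's is more explicit about which analytic ingredient of the kernel produces which regularity (in particular exhibiting the mild logarithmic cusp of $G_g(t,u)$ along the diagonal coming from the $\Gamma$-factor), and it never needs the zero-side expansion. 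Worth noting: your argument implicitly re-proves the Hilbert--Schmidt estimate as a byproduct, whereas the paper's relies on the kernel being continuous and only needs to upgrade Hilbert--Schmidt to trace class.
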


Since $\mathsf{G}_g[a]$ is a trace class operator, 
\[
{\rm Tr}\,\mathsf{G}_g[a] 
= \sum_{n=1}^{\infty} \lambda_{a,n} 
= \int_{-a}^{a} G_g(t,t) \, dt
\]
holds by \cite[Chapter IV, Theorem 8.1]{GGK01}, 
where $\lambda_{a,1}, \lambda_{a,2},\dots$ 
are non-zero eigenvalues of $\mathsf{G}_g[a]$ counting with multiplicity. 
\bigskip

We state further applications of Theorem \ref{Thm_1_2} 
which are somewhat secondary to Theorems \ref{Thm_1_3} and \ref{Thm_1_4}, 
but they are interesting in their own right, 
or the relations between different subjects suggested by those results are interesting. 
\medskip

The function $\Psi(t)$ is bounded on $[0, \infty)$ by \eqref{Eq_103} 
under the RH, and vice versa. 

\begin{theorem} \label{Thm_1_6}
The RH is true if and only if $\Psi(t) = O(1)$ on $[0, \infty)$. 
\end{theorem}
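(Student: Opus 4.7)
The plan is to handle the two directions using the two representations of $\Psi$ furnished by Theorem \ref{Thm_1_1}: the series \eqref{Eq_103} controls $\Psi$ when the zeros are real, while the integral formula \eqref{Eq_102} tests bounds on $\Psi$ against the pole structure of $\xi'/\xi$.

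\medskip

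\textbf{Forward direction.} Assuming the RH, each zero $\rho=1/2+i\tau$ of $\xi$ corresponds to a real $\gamma=-\tau$ in \eqref{Eq_103}. Since $|1-\cos(\gamma t)|\leq 2$ for all real $\gamma$ and $t$, I would bound
\[
|\Psi(t)| \leq 2\sum_{\gamma} \frac{1}{\gamma^2},
\]
and the convergence of the right-hand side follows from the Riemann--von Mangoldt formula $N(T)\sim (T/2\pi)\log(T/2\pi e)$, which gives $\sum_{\gamma} |\gamma|^{-2}<\infty$. The estimate $\Psi(t)=O(1)$ is then immediate.

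\medskip

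\textbf{Converse direction.} Assuming $|\Psi(t)|\leq M$ on $[0,\infty)$, the integral
$F(z):=\int_0^\infty \Psi(t) e^{izt}\,dt$
converges absolutely for every $z$ in the open upper half-plane $\C_+$, since $|\Psi(t) e^{izt}|\leq Me^{-\Im(z)t}$ is integrable there. Differentiating under the integral then shows that $F$ is holomorphic on $\C_+$. By formula \eqref{Eq_102},
\[
F(z)=-\frac{1}{z^2}\frac{\xi'}{\xi}\left(\frac{1}{2}-iz\right) \quad \text{for }\Im z>1/2,
\]
and the identity theorem extends this equality to all of $\C_+$. Hence the meromorphic right-hand side is in fact holomorphic on $\C_+$. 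Its potential poles are of two types: a potential pole at $z=0$ arising from the factor $1/z^2$, which lies on the boundary of $\C_+$ and is irrelevant; and simple poles at $z=i(\rho-1/2)=-\tau+i(\beta-1/2)$ corresponding to zeros $\rho=\beta+i\tau$ of $\xi$, which lie in $\C_+$ precisely when $\beta>1/2$. The holomorphy conclusion therefore forces $\xi$ to have no zero with $\Re\rho>1/2$; combined with the functional equation $\xi(s)=\xi(1-s)$, which rules out zeros with $\Re\rho<1/2$ as well, all nontrivial zeros of $\xi$ must lie on the line $\Re s=1/2$.

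\medskip

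The main technical point will be the analytic-continuation step ensuring that the Laplace-type transform of $\Psi$ and the $\xi'/\xi$-expression agree throughout $\C_+$; once that is established, the argument reduces to reading off the pole locations of $(\xi'/\xi)(1/2-iz)$. Particular care is needed only regarding the behavior at the boundary point $z=0$, where the factor $1/z^2$ and the value of $\xi(1/2)$ interact, but this boundary effect does not obstruct the conclusion in the open upper half-plane.
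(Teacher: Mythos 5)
Your proof is correct and follows essentially the same approach as the paper: the forward direction uses the series \eqref{Eq_103} with the bound $|1-\cos(\gamma t)|\leq 2$ and $\sum_\gamma|\gamma|^{-2}<\infty$, and the converse uses boundedness of $\Psi$ to show $\int_0^\infty\Psi(t)e^{izt}\,dt$ is holomorphic on $\C_+$, hence by \eqref{Eq_102} and the identity theorem $(\xi'/\xi)(1/2-iz)$ has no poles there. Your version is merely spelled out in slightly more detail (citing Riemann--von Mangoldt rather than order-one, and making the functional-equation step and the benign role of $z=0$ explicit).
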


If $\Psi(t)$ belongs to $L^1(0,\infty)$ or $L^2(0,\infty)$, 
formula \eqref{Eq_102} holds for $\Im(z)>0$ and defines 
an analytic function in $\C_+$ 
whose extension to the real line $\Im(z)=0$ 
is bounded or a function of $L^2(\R)$, respectively. 
Therefore, in both cases, it contradicts $z=0$ 
being a simple pole of the right-hand side. 
Hence, $\Psi(t)$ belongs to neither $L^1(0,\infty)$ nor $L^2(0,\infty)$, 
regardless of whether the RH is true or false.
\medskip

If the RH is true, $G_g(t,t)=2(g(0)-g(t))=2\Psi(t)$ is non-negative 
by \eqref{Eq_105} for $n=1$ and $\xi=1$, and vice versa. 

\begin{theorem} \label{Thm_1_7}
The RH is true if and only if 
$\Psi(t)$ is pointwise non-negative, that is, 
$\Psi(t) \geq 0$ for every $t \in \R$. 
Further, if RH is true, $\Psi(t)>0$ when $t \not= 0$. 
\end{theorem}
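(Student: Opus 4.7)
The plan is to split into two directions, with the converse coming from Landau's theorem for Laplace transforms. For the forward implication, I would use the series representation in Theorem~\ref{Thm_1_1}(2): under RH the ordinates $\gamma$ of zeros of $\xi(1/2-iz)$ are real, so each summand $(1-\cos(\gamma t))/\gamma^2$ is non-negative, and hence $\Psi(t) \geq 0$ on $\R$. (Equivalently, Theorem~\ref{Thm_1_2} makes $g=-\Psi$ a screw function, and taking $n=1$ with coefficient $1$ in \eqref{Eq_105} yields $G_g(t,t)=2\Psi(t) \geq 0$.) For the strict positivity $\Psi(t)>0$ when $t \neq 0$, I would argue by contradiction: if $\Psi(t_0)=0$ for some $t_0 \neq 0$, every non-negative summand must vanish, forcing $\gamma t_0 \in 2\pi\Z$ for all ordinates $\gamma$ and thus confining them to an arithmetic progression of spacing $2\pi/|t_0|>0$. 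This contradicts the Riemann--von Mangoldt asymptotic $N(T) \sim (T/2\pi)\log T$, which forces consecutive gaps between ordinates to shrink below any positive threshold.

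For the converse (the main step), I would study the Laplace transform
\[
\tilde\Psi(s) := \int_0^\infty \Psi(t)\,e^{-st}\,dt,
\]
which by Theorem~\ref{Thm_1_1}(3) converges absolutely on $\Re(s)>1/2$. Substituting $z=is$ in \eqref{Eq_102} identifies it on this half-plane with
\[
\tilde\Psi(s) \;=\; \frac{1}{s^2}\,\frac{\xi'}{\xi}\!\left(\frac{1}{2}+s\right),
\]
which is meromorphic on $\C$. Its singularities are simple poles at $s=\rho-1/2$ for each zero $\rho$ of $\xi$, together with a simple pole at $s=0$ of positive residue $\xi''(1/2)/\xi(1/2)$ (produced by the $s^{-2}$ against the simple zero of $(\xi'/\xi)(1/2+s)$ at $s=0$, itself forced by $\xi'(1/2)=0$ from the functional equation). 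Since $\xi$ has no real zeros, the only real singularity of $\tilde\Psi$ on $[0,\infty)$ is $s=0$.

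With the hypothesis $\Psi \geq 0$, Landau's theorem asserts that the abscissa of convergence $\sigma_c$ of $\tilde\Psi$ is itself a singular point lying on $\R$. Non-integrability of $\Psi$ at infinity (noted in the discussion preceding Theorem~\ref{Thm_1_7}, forced by the pole at $0$) gives $\sigma_c \geq 0$, while the absence of any other real singularity in $[0,\infty)$ then forces $\sigma_c = 0$. Consequently the defining integral converges for every $\Re(s)>0$, so $\tilde\Psi$ is analytic throughout the open right half-plane. If RH were false, the functional equation $\xi(s)=\xi(1-s)$ would guarantee a zero $\rho$ with $\Re(\rho)>1/2$, producing a pole of $\tilde\Psi$ at $\rho-1/2$ with positive real part---incompatible with analyticity on $\Re(s)>0$. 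Hence every nontrivial zero lies on $\Re(s)=1/2$.

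The hard part will be the converse, and specifically the precise description of real singularities of $\tilde\Psi$: this rests on the unconditional absence of real zeros of $\xi$ and on $\xi''(1/2)\neq 0$ (standard from the Hadamard expansion giving $\xi''(1/2)/\xi(1/2) = -\sum_\rho (1/2-\rho)^{-2}$), so that $s=0$ is a genuine pole. The strict-positivity claim under RH is comparatively elementary but uses the arithmetic input that consecutive zero ordinates of $\zeta$ cluster faster than any fixed positive gap.
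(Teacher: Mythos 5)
Your proof is correct and follows essentially the same approach as the paper: nonnegativity of the terms $(1-\cos\gamma t)/\gamma^2$ under RH for the forward direction, and Landau's singularity theorem for Laplace transforms of nonnegative functions (\cite[Theorem 5b, Chap. II]{Wi41}) applied to $\widetilde\Psi(s)=s^{-2}(\xi'/\xi)(\tfrac12+s)$ for the converse. The only cosmetic difference is that for strict positivity you appeal directly to the Riemann--von Mangoldt density to rule out the ordinates lying in a fixed arithmetic progression, whereas the paper packages the same zero-density input as Lemma~\ref{Lem_2_1}.
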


We then discretize the pointwise positivity condition in Theorem \ref{Thm_1_7}  
using the $n$th moment
\begin{equation} \label{Eq_113}
\mu_n := \int_{0}^{\infty} 4^{-1}e^{-t/2}\,\Psi(t) \cdot t^n \, dt 
\end{equation}
for $n \in \Z_{\geq 0}$, where the integral 
is absolutely convergent by Theorem \ref{Thm_1_1} (3).

\begin{theorem} \label{Thm_1_8}
Let 
\[
\Delta_n := 
\begin{pmatrix}
\mu_0 & \mu_1 & \cdots & \mu_{n} \\
\mu_1 & \mu_2 & \cdots & \mu_{n+1} \\
\vdots & \vdots & \ddots & \vdots \\
\mu_{n} & \mu_{n+1} & \cdots & \mu_{2n} \\
\end{pmatrix}
\quad \text{and} \quad 
\Delta_n^{(1)} := 
\begin{pmatrix}
\mu_1 & \mu_2 & \cdots & \mu_{n+1} \\
\mu_2 & \mu_3 & \cdots & \mu_{n+2} \\
\vdots & \vdots & \ddots & \vdots \\
\mu_{n+1} & \mu_{n+2} & \cdots & \mu_{2n+1} \\
\end{pmatrix}
\]
be Hankel matrices consisting of moments $\mu_k$ 
defined by \eqref{Eq_113}. 
Then the RH is true if and only if $\det \Delta_n \geq 0$ and $\det \Delta_n^{(1)} \geq 0$ 
for all $n \in \Z_{>0}$. 
\end{theorem}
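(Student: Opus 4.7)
The plan is to invoke Theorem \ref{Thm_1_7} to reduce Theorem \ref{Thm_1_8} to the equivalence
\[
\Psi(t) \geq 0 \ \text{on}\ [0,\infty)
\iff
\det \Delta_n \geq 0\ \text{and}\ \det \Delta_n^{(1)} \geq 0\ \text{for all}\ n \geq 0.
\]
This is precisely the classical Stieltjes moment theorem applied to the (signed) measure $d\sigma(t) := 4^{-1} e^{-t/2}\Psi(t)\,dt$ on $[0,\infty)$, whose $n$-th moments are the $\mu_n$ of \eqref{Eq_113}. The forward direction is immediate: if $\Psi \geq 0$, the measure $\sigma$ is positive with finite moments (by the decay estimate of Theorem \ref{Thm_1_1}(3)), and the Hankel determinant conditions follow from the existence half of Stieltjes' theorem.

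For the reverse direction, assume both Hankel conditions hold. Stieltjes produces \emph{some} positive Borel measure $\nu$ on $[0,\infty)$ with $\int_0^\infty t^n\,d\nu(t) = \mu_n$, but to conclude $\Psi \geq 0$ one must identify $\nu$ with $\sigma$. I plan to do so via the Hahn--Jordan decomposition $\sigma = \sigma^+ - \sigma^-$: the positive measures $\sigma^+$ and $\sigma^- + \nu$ on $[0,\infty)$ then have identical moments (both equal to $\mu_n + \int t^n\,d\sigma^-(t)$), so it suffices to show that the Stieltjes moment problem for $\sigma^+$ is determinate.

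Determinacy will follow from Carleman's criterion. The bound $\Psi(t) \ll e^{t/2 - c\sqrt{t}}$ in Theorem \ref{Thm_1_1}(3) gives $d|\sigma|(t) \ll e^{-c\sqrt{t}}\,dt$, and the substitution $t = u^2/c^2$ yields
\[
\int_0^\infty t^n\,d|\sigma|(t) \ll \frac{(2n+1)!}{c^{2n}}.
\]
Since $\sigma^+ \leq |\sigma|$ and $(2n+1)!^{1/(2n)} \sim 2n/e$ by Stirling, we get $(\sigma^+_n)^{-1/(2n)} \gtrsim c/n$, so $\sum_{n \geq 1}(\sigma^+_n)^{-1/(2n)}$ diverges and Carleman's theorem applies. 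Hence $\sigma^+ = \sigma^- + \nu$, whence $\sigma = \nu \geq 0$; by continuity of $\Psi$ and positivity of $e^{-t/2}$, this yields $\Psi(t) \geq 0$ pointwise on $[0,\infty)$, closing the proof through Theorem \ref{Thm_1_7}.

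The hardest part is precisely this determinacy step. For a generic signed measure with sub-exponential decay $e^{-c\sqrt{t}}$ one must fear an indeterminate moment problem (the log-normal distribution being the classical cautionary example), and indeed no direct moment-growth argument on $\sigma$ itself suffices because $\sigma$ is signed. What rescues us is that the pointwise estimate of Theorem \ref{Thm_1_1}(3) controls the total variation $|\sigma|$ and thus $\sigma^+$, and the resulting growth rate $\mu_n \ll (2n+1)!/c^{2n}$ lands $\sigma^+$ just inside Carleman's uniqueness class; the rest is bookkeeping.
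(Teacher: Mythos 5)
Your argument has the same skeleton as the paper's: both directions reduce, via Theorem~\ref{Thm_1_7}, to the pointwise positivity $\Psi \geq 0$, and both deploy the classical Stieltjes solvability criterion together with a determinacy argument driven by the decay estimate of Proposition~\ref{Prop_2_1} (equivalently Theorem~\ref{Thm_1_1}(3)). Where you differ, modestly, is in how determinacy is justified and used. The paper cites \cite[Theorem~2]{Lin17} for determinacy and then concludes rather briskly that the Stieltjes-produced positive measure must coincide with $4^{-1}e^{-t/2}\Psi(t)\,dt$; you instead carry out the Hahn--Jordan decomposition $\sigma=\sigma^{+}-\sigma^{-}$, observe that the positive measures $\sigma^{+}$ and $\sigma^{-}+\nu$ have identical moments, and establish Carleman's condition for $\sigma^{+}$ from $\sigma^{+}\le|\sigma|\ll e^{-c\sqrt{t}}\,dt$, which yields a moment bound of order $(2n+1)!/c^{2n}$ and hence divergence of the Carleman series. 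This is a more self-contained route to the identification $\nu=\sigma$, and it makes explicit a point the paper leaves implicit: determinacy theorems address positive measures, whereas the defining measure $4^{-1}e^{-t/2}\Psi(t)\,dt$ is a priori signed, so a reduction such as yours is genuinely needed to close the argument. The computation you sketch is correct.
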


In \cite{Li97}, X.-J. Li proved the so-called 
Li's criterion that claims that the RH is true 
if and only if all Li coefficients defined by 
\begin{equation} \label{Eq_114}
\lambda_{n+1}=\frac{1}{n!}\frac{d^n}{dw^n}\left[
\frac{1}{(1-w)^2} \, \frac{\xi'}{\xi}\left(\frac{1}{1-w}\right)
\right]_{w=0}, 
\end{equation}
are positive. 
Since the sequence  $\{n^{-1}\lambda_n\}_{n=1}^{\infty}$ was considered 
by J. B. Keiper \cite{Ke92} before Li, 
the Li coefficients are also referred to as the Keiper--Li coefficients 
in some literature. 
Bombieri--Lagarias \cite{BoLa99} found that 
the positivity of all Li coefficients is a discretization 
of the positivity of the Weil distribution. 
The relation of Theorems \ref{Thm_1_3}  and \ref{Thm_1_8} 
can be considered as an analog of the relation of the Weil distribution and Li coefficients 
(see also Section \ref{Section_3_4}). 
We are then interested in a direct relation between 
moments $\mu_n$ and Li coefficients $\lambda_n$. 
Changing of variables as $z=(i/2)(1-2X)$ in \eqref{Eq_102} 
and then expanding $\exp(izt)$ 
to the power series of $X$, we obtain
\begin{equation} \label{Eq_115}
\mu_n = 
\frac{d^n}{dX^n}\left[
\frac{1}{(1-2X)^2}\frac{\xi'}{\xi}(1-X)
\right]_{X=0}. 
\end{equation}
The similarity between \eqref{Eq_114} and \eqref{Eq_115} is obvious, 
and it is shown in Section \ref{Section_8} that 
there is an explicit relation between them.
\smallskip

Recall \eqref{Eq_107} and define $q_\xi(z):=Q_\xi(\sqrt{z})/\sqrt{z}$. 
Then, $q_\xi(z)$ 
belongs to the subclass of $\mathcal{N}$ corresponding to Kre\u{\i}n's strings 
under the RH (see Section \ref{Section_9}).  
The corresponding string is the one named Zeta string by S. Kotani \cite{Ko21}. 

On the other hand, the series representation \eqref{Eq_103} 
suggests that $g(t)=-\Psi(t)$ is a mean-periodic function 
by appropriate choice of function space. 
In fact, it is unconditionally shown that $g(t)$ is mean-periodic 
using the function spaces used in Fesenko--Ricotta--Suzuki \cite{FRS12} (see Section \ref{Section_10}).
\smallskip

The functions in class $\mathcal{G}_a$ have various fruitful connections 
with many different mathematical objects, for example, 
due to a relation with positive-definite functions described in \cite{KrLa14}. 
Therefore, more interesting discoveries and connections are expected 
for the screw function of the Riemann zeta-function 
from the origin mentioned above. 
Furthermore, although this paper focused only on the Riemann zeta-function for simplicity, 
it would be natural to extend the results of this paper 
to other general $L$-functions like automorphic $L$-functions 
or $L$-functions in the Selberg class.  
However, we leave such studies for 
other papers \cite{NaSu22, Su22a, Su22b, Su23a, Su23b} and future research.
\smallskip

This paper is organized as follows. 
In Section \ref{Section_2}, we prove Theorem \ref{Thm_1_1} and a lemma needed in later sections. 
In Section \ref{Section_3}, we prove Theorem \ref{Thm_1_3} 
after reviewing the Weil distribution and preparing for the notation. 
In Section \ref{Section_4}, we establish the pointwise non-negativity of $\Psi(t)$, 
and the non-negativity of $G_g(t,u)$ for small $t$, $u$, 
as a preliminary step towards proving Theorem \ref{Thm_1_4}. 
Moreover, we state and prove a lower bound 
for $\langle \phi, \phi \rangle_{G_g,a}$ 
under restrictions to $\phi$. 
In Section \ref{Section_5}, we prove Theorem \ref{Thm_1_4} 
using results in Section \ref{Section_4} 
and make a comparison with Yoshida's results.
In Section \ref{Section_6}, 
we prove Theorem \ref{Thm_1_5} 
and study eigenvalues of the kernel $G_g(t,u)$. 
In Section \ref{Section_7}, we prove Theorems \ref{Thm_1_6} and \ref{Thm_1_7}, and \ref{Thm_1_8}. 
In Section \ref{Section_8}, 
we describe explicit relations between the moments $\mu_n$ and the Li coefficients.  
In Section \ref{Section_9}, 
we describe a Kre\u{\i}n string corresponding to $Q_\xi(z)$ in \eqref{Eq_107} 
under the RH. 
In Section \ref{Section_10}, 
we discuss the mean-periodicity of the screw function $g(t)$. 
\comment{
In Section \ref{section_12}, 
we note that the zeros of $\xi(1/2-iz)$ 
are interpreted as eigenvalues of a self-adjoint operator 
acting on a Hilbert space obtained by the completion 
with respect to the Hermitian form $\langle \cdot,\cdot\rangle_{G_g,\infty}$ 
under the RH. 
}
In Section \ref{Section_11}, we introduce variants $\Psi_\omega(t)$ of $\Psi(t)$ 
and state an analog of Theorem \ref{Thm_1_7}. 
Further, we study relations among different $\Psi_\omega(t)$'s. 
\medskip

\noindent
{\bf Acknowledgments}~
The author appreciates the valuable suggestions and comments of the referees
that resulted in improvements in the presentation and organization of the paper.  
This work was supported by JSPS KAKENHI Grant Number JP17K05163 
and JP23K03050. 
This work was also supported by the Research Institute for Mathematical Sciences, 
an International Joint Usage/Research Center located in Kyoto University.

%
\section{Proof of Theorem \ref{Thm_1_1} and a Lemma} \label{Section_2}
%

\subsection{Proof of Theorems \ref{Thm_1_1} (1)} 

We use the change of variables $s=1/2-iz$ for convenience of writing. 
We first note that the equality 
\begin{equation*} 
\aligned 
\frac{\xi'}{\xi}(s) 
& = \frac{1}{s-1} +\frac{1}{s} - \frac{1}{2}\log \pi 
+ \frac{1}{2}\frac{\Gamma'}{\Gamma}\left(\frac{s}{2}\right) 
- \sum_{p} \log p \sum_{n=1}^{\infty} p^{-ns}
\endaligned 
\end{equation*}
holds for $\Re(s)>1$, which is equivalent to $\Im(z)>1/2$ in terms of $z$. 
We have 
\begin{equation} \label{Eq_201} 
\int_{0}^{\infty} 4(e^{t/2}+e^{-t/2}-2)\, e^{izt} \, dt 
= -\frac{1}{z^2}\,\left(\frac{1}{s-1}+\frac{1}{s}\right) \quad \text{if}~\Im(z)>1/2, 
\end{equation}
\begin{equation} \label{Eq_202} 
\int_{0}^{\infty} t\,e^{izt} \, dt 
= -\frac{1}{z^2} \quad \text{if}~\Im(z)>0, \quad \text{and}
\end{equation}
\begin{equation*} 
\int_{0}^{\infty} \frac{(t-\log n)}{\sqrt{n}} \, \mathbf{1}_{[\log n,\infty)}(t)\, e^{izt} \, dt 
= -\frac{1}{z^2} \, n^{-s} \quad \text{if}~\Im(z)>0
\end{equation*}
by direct and simple calculation. The third equality leads to 
\begin{equation} \label{Eq_203} 
\aligned 
\int_{0}^{\infty} & 
\left(-\sum_{n \leq e^t} \frac{\Lambda(n)}{\sqrt{n}}(t-\log n)\right) e^{izt} \, dt \\
& = -
\int_{0}^{\infty} \sum_{n=2}^{\infty}  
\frac{\Lambda(n)}{\sqrt{n}}(t-\log n)\mathbf{1}_{[\log n,\infty)}(t)\, e^{izt} \, dt \\
& = 
-\frac{1}{z^2}\left(-\sum_{n=2}^{\infty} \Lambda(n)\, n^{-s} \right)\quad  \text{if}~
\Im(z)>1/2. 
\endaligned 
\end{equation} 
Hence, the proof is completed if it is proved that 
\begin{equation} \label{Eq_204}
\int_{0}^{\infty} 
\frac{1}{4}
\left( C - e^{-t/2}\Phi(e^{-2t},2,1/4) \right) e^{izt} \, dt 
=-
 \frac{1}{2z^2} \left(
\frac{\Gamma'}{\Gamma}\left(\frac{s}{2}\right)
-\frac{\Gamma'}{\Gamma}\left(\frac{1}{4}\right)
\right)
\end{equation}
holds for $\Im(z)>0$. To prove this, we recall the well-known series expansion 
\begin{equation} \label{Eq_205} 
\frac{\Gamma'}{\Gamma}(w) = -\gamma_0 - \sum_{n=0}^{\infty}
\left( \frac{1}{w+n} - \frac{1}{n+1} \right), 
\end{equation}
where $\gamma_0$ is the Euler--Mascheroni constant. 
On the other hand, we have 
\begin{equation*} 
\int_{0}^{\infty} \frac{1-e^{-2(n+\frac{1}{4}) t}}{2(n+\frac{1}{4})^2}\, e^{izt} \, dt 
= \frac{1}{z^2} \left( \frac{1}{s/2+n} - \frac{1}{n+1/4} \right) 
\end{equation*}
for non-negative integers $n$ and $\Im(z)>0$ 
by direct and simple calculation.  
In addition, noting that $C=\pi^2+8G$ is a special value of 
the Hurwitz zeta function, 
precisely $C=\zeta(2,1/4)=\Phi(1,2,1/4)=\sum_{n=0}^{\infty}(n+1/4)^{-2}$, 
we get
\begin{equation*} 
\aligned 
\int_{0}^{\infty} & 
\frac{1}{4}\Bigl[
C - 
e^{-t/2} 
\Phi(e^{-2t},2,1/4)
\Bigr] e^{izt}\, dt \\
& = \frac{1}{2}
\int_{0}^{\infty} 
\left( 
\sum_{n=0}^{\infty}
\frac{1-e^{-2(n+\frac{1}{4}) t}}{2(n+1/4)^2}
\right) e^{izt}\, dt  
 = \frac{1}{2z^2}  \sum_{n=0}^{\infty} \left( \frac{1}{s/2+n} - \frac{1}{n+1/4} \right) \\
& = \frac{1}{2z^2}  \sum_{n=0}^{\infty} \left( \frac{1}{s/2+n} - \frac{1}{n+1} \right)
+ 
\frac{1}{2z^2}  \sum_{n=0}^{\infty} \left(  \frac{1}{n+1} - \frac{1}{n+1/4}   \right) \\
& = 
-
 \frac{1}{2z^2} \left(\frac{\Gamma'}{\Gamma}\left(\frac{s}{2}\right) + \gamma_0 \right) 
+ \frac{1}{2z^2} \left(\frac{\Gamma'}{\Gamma}\left(\frac{1}{4}\right)+ \gamma_0 \right). 
\endaligned 
\end{equation*}
We used \eqref{Eq_205} in the last equation.
Hence, we obtain equality \eqref{Eq_204}. \hfill $\Box$

\subsection{Proof of Theorems \ref{Thm_1_1} (2)} 

Since $\xi(s)$ is an order one entire function, Hadamard's factorization theorem gives
\[
\xi\left(\frac{1}{2}-iz\right) = e^{a+bz} \prod_\gamma \left[\left(1-\frac{z}{\gamma} \right)
e^{\frac{z}{\gamma}} \right]. 
\]
Taking the logarithmic derivative of both sides, 
\begin{equation} \label{Eq_206}
\frac{\xi'}{\xi}\left(\frac{1}{2}-iz\right)
= ib + i\sum_\gamma \left( \frac{1}{z-\gamma} + \frac{1}{\gamma} \right), 
\end{equation}
where the sum on the right-hand side converges absolutely and uniformly on every
compact subset of $\C$ outside the zeros $\gamma$. 
Substituting $z=0$ into \eqref{Eq_206}, we have $ib=(\xi'/\xi)(1/2)$. 
On the other hand, taking the logarithmic derivative of $\xi(s)=\xi(1-s)$, 
$(\xi'/\xi)(s) = - (\xi'/\xi)(1-s)$. 
Substituting $s=1/2$ in this equality, we get $(\xi'/\xi)(1/2)=0$, 
thus $ib=(\xi'/\xi)(1/2)=0$. For each term on the right-hand side of \eqref{Eq_206}, 
we have 
\begin{equation*} 
-\frac{i}{z^2}  \left( \frac{1}{z-\gamma} + \frac{1}{\gamma} \right)
=
\int_{0}^{\infty} \frac{1-e^{-it\gamma}}{\gamma^2}\, e^{izt} \, dt, 
\quad \Im(z)>\Im(\gamma)
\end{equation*}
by direct calculation of the right-hand side, 
where $|\Im(\gamma)| \leq 1/2$ (\cite[Theorem 2.12]{Tit86}). 

Therefore, if $\tilde{\Psi}(t)$ 
is the function defined by the right-hand side of \eqref{Eq_103}, 
then \eqref{Eq_102} holds when $\Im(z)>1/2$ 
by interchanging summation and integration, 
which is justified 
by the absolute convergence of the sum 
on the right-hand side of \eqref{Eq_103}, 
and noting the symmetry $\gamma \to -\gamma$ 
coming from the functional equation $\xi(s)=\xi(1-s)$. 
The two functions $\Psi(t)$ and $\tilde{\Psi}(t)$ 
have the same Fourier transform, so they are identical. 
The second equality in \eqref{Eq_103} also follows from 
the symmetry $\gamma \to -\gamma$. 
\hfill $\Box$

\subsection{Proof of Theorems \ref{Thm_1_1} (3)} 

The assertion aimed at follows from the following proposition and \eqref{Eq_101}. 
The result is weaker than the one 
that follows from the best known 
zero-free region of $\zeta(s)$,  
but it is sufficient to guarantee the convergence of the integral in \eqref{Eq_113}. 

\begin{proposition} \label{Prop_2_1} There exists $c>0$ such that 
\begin{equation} \label{Eq_207}
\sum_{n \leq e^t} \frac{\Lambda(n)}{\sqrt{n}}(t-\log n) 
= 4e^{t/2} + O\left(e^{t/2} e^{-c\sqrt{t}} \,\right)
\end{equation}
holds for $t>0$. 
\end{proposition}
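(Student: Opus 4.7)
The plan is to derive Proposition \ref{Prop_2_1} from the classical prime number theorem with a de la Vall\'ee Poussin-type error term,
\[
\psi(u) = u + O\!\left(u\exp(-c\sqrt{\log u})\right), \qquad \psi(u) := \sum_{n \leq u} \Lambda(n),
\]
which itself follows from the standard zero-free region of $\zeta(s)$ by a contour shift. This input converts Proposition \ref{Prop_2_1} into a purely elementary calculation.

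First I would use $t - \log n = \int_n^{e^t} du/u$ for $n \leq e^t$ and interchange sum and integral to obtain
\[
\sum_{n \leq e^t} \frac{\Lambda(n)}{\sqrt{n}}(t - \log n)
= \int_{1}^{e^t} \frac{f(u)}{u}\, du,
\qquad
f(u) := \sum_{n \leq u} \frac{\Lambda(n)}{\sqrt{n}},
\]
so that the proposition reduces to the sharp asymptotic $f(u) = 2\sqrt{u} + O\!\left(\sqrt{u}\exp(-c'\sqrt{\log u})\right)$. This I would prove by Abel summation against $\psi(u)$,
\[
f(u) = \frac{\psi(u)}{\sqrt{u}} + \frac{1}{2}\int_{1}^{u} \frac{\psi(v)}{v^{3/2}}\, dv,
\]
inserting the PNT with the error term above; the elementary computation $\int_{1}^{u} v^{-1/2}\, dv = 2\sqrt{u}-2$ produces the main term, while the error contribution reduces to one integral of the shape discussed below. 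Dividing by $u$ and integrating once more from $1$ to $e^t$ then produces the main term $4 e^{t/2}$ together with an error of the asserted form.

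The only analytic point is controlling integrals of the shape
\[
I(x) := \int_{1}^{x} u^{-1/2}\exp(-c\sqrt{\log u})\, du,
\]
which appears in both the inner Abel-summation step and the outer integration in $u$. Under the substitution $u = e^v$ this becomes $\int_{0}^{\log x} e^{v/2}\exp(-c\sqrt{v})\, dv$, whose integrand is eventually monotone increasing and grows essentially like $e^{v/2}$; a direct estimate gives $I(x) \ll \sqrt{x}\exp(-c''\sqrt{\log x})$ for any $0 < c'' < c$ and $x$ large. Applying this bound twice produces \eqref{Eq_207} with a strictly positive absolute constant, possibly smaller than the one supplied by the PNT input---this is precisely the weakening alluded to in the remark after the statement. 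Since all other terms on the right-hand side of \eqref{Eq_101} are polynomially bounded, combining this estimate with \eqref{Eq_101} then yields Theorem \ref{Thm_1_1}(3).
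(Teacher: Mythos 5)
Your proposal is correct, but the route is genuinely different from the paper's. The paper proves Proposition \ref{Prop_2_1} directly via a truncated Perron integral: it writes the sum as $\varphi(t) = -\frac{1}{2\pi}\int_{-X+ic}^{X+ic} z^{-2}\bigl[-(\zeta'/\zeta)(1/2-iz)\bigr]e^{-izt}\,dz$ plus truncation errors (handled term-by-term, with separate treatment of $n$ near $e^t$ versus far from $e^t$), then shifts the contour down to the boundary of the standard zero-free region, collects the main term $4e^{t/2}$ from the pole at $s=1$, and finally optimizes by choosing $\log X = t^{1/2}$. You instead treat the de la Vall\'ee Poussin form of the PNT for $\psi(u)$ as a black-box input and reduce everything to elementary real-variable manipulations: the identity $\sum_{n\le e^t}\Lambda(n)n^{-1/2}(t-\log n)=\int_1^{e^t}f(u)/u\,du$, Abel summation to write $f(u)=\psi(u)/\sqrt{u}+\tfrac12\int_1^u \psi(v)v^{-3/2}\,dv$, and the estimate $\int_1^x u^{-1/2}e^{-c\sqrt{\log u}}\,du \ll \sqrt{x}\,e^{-c''\sqrt{\log x}}$ applied twice. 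Your argument is shorter and cleaner because it outsources the contour work to the textbook PNT; the paper's argument is self-contained, redoing the Perron analysis from scratch (at the cost of the fairly delicate estimates \eqref{Eq_208}--\eqref{eq_00516_9} for $n$ near $e^t$). Both ultimately rest on the same zero-free region, so the quality of the constant $c$ is comparable. One small imprecision: in estimating $I(x)=\int_0^{\log x}e^{w/2-c\sqrt{w}}\,dw$ by splitting at $(\log x)/2$, the constant you obtain is bounded by $c/\sqrt{2}$ rather than any $c''<c$; this does not affect the conclusion, which only needs some positive constant. Likewise your closing remark that the remaining terms in \eqref{Eq_101} are ``polynomially bounded'' is slightly loose as stated (the term $4(e^{t/2}+e^{-t/2}-2)$ is not), but the intended cancellation of $4e^{t/2}$ against the main term of $\varphi(t)$ is clear and the deduction of Theorem \ref{Thm_1_1}(3) goes through.
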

\begin{proof} 
We write the left-hand side of \eqref{Eq_207} as $\varphi(t)$. 
For $X>0$ and $c>0$, we have 
\begin{equation*} 
\aligned 
- \frac{1}{2\pi} & \int_{-X+ic}^{X+ic} \frac{e^{-izt}}{z^2} \, dz 
= 
\begin{cases}
\displaystyle{ t + O\left( e^{ct}\,\min\left\{ \frac{1}{|t|X^2},\,\frac{1}{\sqrt{X^2+c^2}} \right\} \right)} 
 & (t>0), \\[10pt]
\displaystyle{ 0 + O\left( \frac{2X}{X^2+c^2} \right)} 
 & (t=0), \\[10pt]
\displaystyle{ 0 + O\left( e^{ct}\,\min\left\{ \frac{1}{|t|X^2},\,\frac{1}{\sqrt{X^2+c^2}} \right\} \right)} 
 & (t<0) \\
\end{cases}
\endaligned 
\end{equation*}
by a standard way of analytic number theory as in \cite[\S3.12]{Tit86}. 
Applying this to each term of the Dirichlet series expansion 
$-(\zeta'/\zeta)(s)=\sum_{n=1}^{\infty}\Lambda(n)n^{-s}$, 
we have 
\begin{equation} \label{Eq_208}
\aligned 
- \frac{1}{2\pi} & \int_{-X+ic}^{X+ic} 
\frac{1}{z^2}
\left[ -\frac{\zeta'}{\zeta}\left(\frac{1}{2}-iz\right) \right]  e^{-izt} \, dz
 = \varphi(t) + O\left( \frac{\Lambda(e^t)}{e^{t/2}}\frac{2X}{X^2+c^2} \right) \\
& \qquad  \quad 
+ O\left(
\sum_{n \not= e^t} \frac{\Lambda(n)}{\sqrt{n}} e^{c(t-\log n)}
\min\left\{ \frac{1}{|t-\log n|X^2},\,\frac{1}{\sqrt{X^2+c^2}} \right\}
\right)
\endaligned 
\end{equation}
for $c>1/2$. In the remaining part of the proof, we take $c$ as $1/2+1/t$.

In the sum on the right-hand side of \eqref{Eq_208}, 
the contribution from $n$ in the range $n \leq e^{t}/2$ or $n \geq 3e^{t}/2$ is
\begin{equation} \label{Eq_209}
\ll \frac{1}{X^2}\sum_{n} \frac{\Lambda(n)}{\sqrt{n}} e^{c(t-\log n)}
= \frac{e^{ct}}{X^2} \left[ -\frac{\zeta'}{\zeta}\left(\frac{1}{2}+c\right) \right] 
\ll \frac{t\cdot e^{t/2}}{X^2}, 
\end{equation}
since $|t-\log n| \gg 1$, $e^{ct}=e\cdot e^{t/2}$, and $(\zeta'/\zeta)(1/2+c)=O(t)$ 
(by $\zeta(s)=(s-1)^{-1}+O(1)$ near $s=1$).  
For $n$ in the range $e^t/2 < n < e^t -2$, if we put $[e^t]-n=r$, 
\begin{equation*} 
t - \log n = \log \frac{e^t}{n} = \log \frac{e^t}{[e^t]-r} 
\geq  - \log \left( 1 - \frac{r}{[e^t]} \right) \geq \frac{r}{[e^t]}. 
\end{equation*}
Therefore, the contribution from $n$ in this range is
\begin{equation} \label{Eq_210}
\aligned 
\ll & \frac{e^t}{X^2}
\sum_{2 \leq r < e^{t}/2} \frac{\Lambda([e^t]-r)}{\sqrt{[e^t]-r}} \cdot \frac{1}{r} 
\ll \frac{t \cdot e^t}{X^2}
\sum_{2 \leq r < e^{t}/2} \frac{1}{\sqrt{[e^t]-r}} \cdot \frac{1}{r} \\ 
& \ll \frac{t \cdot e^{t/2}}{X^2}
\sum_{2 \leq r < e^{t}/2} \frac{1}{r} 
\ll \frac{t^2 \cdot e^{t/2}}{X^2}. 
\endaligned 
\end{equation}
The same applies to contributions from $n$ in the range $e^t +2 < n < 3e^t/2$. 
Finally, the contribution from $n$ in the range $e^t-2 < n < e^t +2$ is 
\begin{equation} \label{eq_00516_9}
\ll 
\frac{\Lambda(e^t)}{\sqrt{e^t}} e^{c \, \log \frac{1}{1+O(e^{-t})} }
\frac{1}{X} \ll \frac{t \cdot e^{-t/2}}{X}, 
\end{equation}
because 
\begin{equation*} 
\min\left\{ \frac{1}{|t-\log n|X^2},\,\frac{1}{\sqrt{X^2+c^2}} \right\} 
\leq \frac{1}{\sqrt{X^2+c^2}} \leq \frac{1}{X}. 
\end{equation*}

From the estimates \eqref{Eq_209}, \eqref{Eq_210}, and \eqref{eq_00516_9} 
for the sum in \eqref{Eq_208}, 
we obtain 
\begin{equation*} 
\aligned 
\varphi(t) 
& = 
- \frac{1}{2\pi} \int_{-X+ic}^{X+ic} 
\frac{1}{z^2}
\left[ -\frac{\zeta'}{\zeta}\left(\frac{1}{2}-iz\right) \right]  e^{-izt} \, dz + O\left( \frac{t^2 \cdot e^{t/2}}{X^2} \right)
+ O\left(
\frac{t \cdot e^{-t/2}}{X}
\right)
\endaligned 
\end{equation*}
for $c=1/2+1/t$. Referring the zero-free region 
$\sigma>1-c(\log|t|+3)^{-1}$ ($c>0$) of $\zeta(\sigma+it)$ \cite[Theorem 3.8]{Tit86}, 
we put $\psi(x)=c'(\log|t|+3)^{-1}$ ($0<c'<c$) 
and let $C_X$ be the curve from 
$-X+i(1/2-\psi(-X))$ to $X +i(1/2-\psi(X))$ along $y=1/2-\psi(x)$, 
where $z=x+iy$, 
Then, moving the path of integration downward, 
\begin{equation*} 
\aligned 
\varphi(t) 
& = 4e^{t/2} + O\left( \frac{t^2 \cdot e^{t/2}}{X^2} \right)
+ O\left(
\frac{t \cdot e^{-t/2}}{X}
\right)
 \\
&  - \frac{1}{2\pi} 
\left(
\int_{-X+ic}^{-X+i(\frac{1}{2}-\psi(-X))}
+ 
\int_{C_X} 
+
\int_{X+i(\frac{1}{2}-\psi(-X))}^{X+ic}
\right)
\frac{1}{z^2}
\left[ -\frac{\zeta'}{\zeta}\left(\frac{1}{2}-iz\right) \right]  e^{-izt} \, dz. 
\endaligned 
\end{equation*}
The first and third integrals on the right-hand side are estimated as 
$\ll X^{-2} e^{ct} \log X \ll X^{-2} e^{t/2} \log X$ 
by the estimate $(\zeta'/\zeta)(\sigma+it) \ll \log (|t|+3)$ (\cite[(3.11.7)]{Tit86}). 
The integral on $C_X$ is estimated as 
\begin{equation*} 
\aligned 
\ll & \int_{-X}^{X} \frac{\log(|x|+3)}{(|x|+3)^2} \,e^{t(1/2-\psi(x))} \,dx
\ll 
e^{t/2} \exp\left( - \frac{c't}{\log(X+3)} \right)
\int_{-X}^{X} \frac{\log(|x|+3)}{(|x|+3)^2} \,dx \\
\ll & e^{t/2} \exp\left( - \frac{c't}{\log(X+3)} \right) \log(X+3).
\endaligned 
\end{equation*}
Therefore, we obtain 
\begin{equation*} 
\aligned 
\varphi(t) 
& = 4e^{t/2} 
+ O\left(\frac{e^{t/2} \log X}{X^2} \right)
+ O\left(e^{t/2} \exp\left( - \frac{c't}{\log(X+3)} \right) \log(X+3) \right)\\
& \qquad + O\left( \frac{t^2 \cdot e^{t/2}}{X^2} \right)
+ O\left(
\frac{t \cdot e^{-t/2}}{X}
\right). 
\endaligned 
\end{equation*}
Finally, choosing $X$ as $\log X = t^{1/2}$, we obtain \eqref{Eq_207} 
for a properly taken $c>0$. 
\end{proof}

\subsection{Lemma} 

The following lemma is often used in subsequent sections.

\begin{lemma} \label{Lem_2_1}
Let $F(z)$ be an entire function of the exponential type 
and let $A$ be a complex number. 
Suppose that $F(\gamma)=A$ for all zeros $\gamma$ of $\xi(1/2-iz)$. 
Then $F(z)=A$ as a function.  
\end{lemma}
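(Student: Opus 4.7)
The plan is to obtain a contradiction by comparing how many zeros $F(z)-A$ can have with how many zeros $\xi(1/2-iz)$ has. Set $H(z):=F(z)-A$; then $H$ is again entire of exponential type, and by hypothesis the multiset of zeros $\gamma$ of $\xi(1/2-iz)$ (counted with multiplicity, consistent with the convention in \eqref{Eq_103}) is contained in the zero multiset of $H$. If I can show that this multiset is too dense to fit inside the zero set of a nonzero entire function of exponential type, the lemma follows.

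For the upper bound, I would invoke Jensen's formula. If $H$ has exponential type $\tau$, then $\log|H(z)|\le\tau|z|+O(1)$ on circles of sufficiently large radius, so the counting function $n_H(R)$ of zeros of $H$ in $|z|\le R$ (with multiplicity) satisfies
\[
n_H(R/e)\le \int_0^R \frac{n_H(r)}{r}\,dr
= \frac{1}{2\pi}\int_0^{2\pi}\log|H(Re^{i\theta})|\,d\theta - \log|H(0)| = \tau R+O(1),
\]
i.e., $n_H(R)=O(R)$. If $H(0)=0$ one first factors out the zero at the origin, which does not affect the asymptotic.

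For the lower bound I would use the Riemann--von Mangoldt formula. The zeros $\gamma$ of $\xi(1/2-iz)$ correspond bijectively (with multiplicity) to the nontrivial zeros $\rho$ of $\zeta$ via $\gamma=i(\rho-1/2)$, so $|\Im\gamma|\le 1/2$ and $\Re\gamma=-\Im\rho$. Riemann--von Mangoldt gives $\#\{\rho:|\Im\rho|\le T\}\sim (T/\pi)\log T$, and because the $\gamma$'s lie in a horizontal strip of bounded height, the same asymptotic holds for $\#\{\gamma:|\gamma|\le R\}$. If $H\not\equiv 0$ this would force $n_H(R)\gg R\log R$, contradicting the $O(R)$ bound above; hence $H\equiv 0$, i.e., $F\equiv A$. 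The only subtle point in carrying out this plan is to justify the reading of ``$F(\gamma)=A$ for all zeros $\gamma$'' as encompassing multiplicity, so that both counts are genuinely comparable; this is the natural reading in view of \eqref{Eq_103}, but even the weaker reading suffices, since the number of \emph{distinct} nontrivial zeros of $\zeta$ up to height $T$ is easily seen to be $\gg T$, still beating the linear upper bound from Jensen.
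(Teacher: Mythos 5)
Your overall strategy (Jensen upper bound for the zero-counting function of an exponential-type function vs.\ a Riemann--von Mangoldt-type lower bound for zeros of $\xi(1/2-iz)$) is exactly the paper's, and your upper bound via Jensen's formula is fine. But the handling of the ``distinct vs.\ with multiplicity'' issue, which you correctly flag as the only subtle point, contains a genuine error at the final step, and the workaround you offer does not close the gap.

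First, the ``natural reading'' argument does not help. The hypothesis is the pointwise statement $F(\gamma)=A$; a universal quantifier over a multiset is the same as one over the underlying set, so the hypothesis gives you no information about the order of vanishing of $F-A$ at a multiple zero of $\xi(1/2-iz)$. The convention in \eqref{Eq_103} governs how sums over $\gamma$ are weighted; it cannot strengthen a pointwise hypothesis. So you are forced to compare the $O(R)$ Jensen bound against the number of \emph{distinct} zeros. Second, and this is the fatal step, $\gg T$ distinct zeros does not ``beat'' an $O(R)$ upper bound: both are linear, and $c_1 T \le n(T) \le c_2 T$ is perfectly consistent. To get a contradiction you need the number of distinct zeros to grow strictly faster than linearly. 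That is exactly what the paper supplies by invoking Littlewood's theorem (Titchmarsh, \S 9.12) that the number of distinct zeros of $\xi(1/2-iz)$ in $|z|\le r$ is \emph{not} $O(r)$; the paper also remarks that the even stronger bound $\gg T\log T$ for simple zeros (Levinson, Conrey \cite{Con89}) works and is more convenient. Neither of these is ``easily seen''; each is a substantial theorem, and your proof is incomplete without citing one of them (or an equivalent) in place of the claimed $\gg T$.
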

\begin{proof} It suffices to deal only with case of $A=0$. 
The number of zeros of $F(z)$ in the disc $|z| \leq r$ counted with multiplicity 
is $O(r)$ by the assumption. 
On the other hand, 
the number of distinct zeros of $\xi(1/2-z)$ in the disc $|z| \leq r$ 
is not $O(r)$ by \cite[Section 9.12]{Tit86}, which was first proved by Littlewood. 
This is a contradiction if $F\not=0$. 
\end{proof} 

The known result that the number of simple zeros of $\zeta(s)$ 
on the critical line up to height $T$ is bounded below by $T\log T$ 
is more convenient for the proof of Lemma \ref{Lem_2_1} 
(see Conrey \cite{Con89} and his comments on the result of Levinson (1974) in the introduction).

%
\section{Proof of Theorem \ref{Thm_1_3} } \label{Section_3}
%

We denote the Fourier transform of $\phi(t)$ as 
\begin{equation*}
\widehat{\phi}(z) 
=
\int_{-\infty}^{\infty} \phi(t) \, e^{izt} \, dt.
\end{equation*}
in this and the latter sections even when $z$ is not a real number.

\subsection{Proof of necessity} 
Assuming that the RH is true, $G_g(t,u)$ is non-negative definite on the real line 
by Theorem \ref{Thm_1_2}. 
Therefore, $ \langle \phi, \phi \rangle_{G_g,a}=\langle \mathsf{G}_g[a]\phi, \phi \rangle_{L^2} \geq 0$ 
for every $\phi \in L^2(-a,a)$, 
since all eigenvalues of $\mathsf{G}_g[a]$ are non-negative  (\cite[\S8]{St76}). 
Moreover, the positive definiteness is directly proved as follows.

For integrable functions $\phi_1(t)$ and $\phi_2(t)$ 
with ${\rm supp}\,\phi_i \subset [-a,a]$ ($i=1,2$), 
\begin{equation} \label{Eq_301}
\aligned 
\langle \phi_1, \phi_2 \rangle_{G_g,a}
& 
= \sum_{\gamma} 
\frac{\widehat{\phi_1}(-\gamma)-\widehat{\phi_1}(0)}{\gamma} \cdot 
\frac{\widehat{\overline{\phi_2}}(\gamma)-\widehat{\overline{\phi_2}}(0)}{\gamma}
\endaligned 
\end{equation}
by \eqref{Eq_109}.  
If we take $\phi=\phi_1=\phi_2$ and 
suppose that the RH is true. 
Then all $\gamma$ are real, and thus 
$\widehat{\overline{\phi}}(\gamma)=\overline{\widehat{\phi}(-\gamma)}$. 
Therefore the right-hand side of \eqref{Eq_301} is equal to 
\begin{equation} \label{Eq_302}
\sum_{\gamma } ~
\left| \frac{\widehat{\phi}(\gamma)-\widehat{\phi}(0)}{\gamma} \right|^2
 \geq 0.
\end{equation}
The sum is positive for every non-zero $\phi(t)$ in $L^2(-a,a)$ 
by Lemma \ref{Lem_2_1}, since $\widehat{\phi}(z)$ 
is a non-constant entire function of the exponential type.  
Hence the hermitian form 
$\langle \cdot, \cdot \rangle_{G_g,a}$ is positive definite 
for every $0<a<\infty$ under the RH. \hfill $\Box$
\medskip

To prove the sufficiency of the non-negative definiteness of 
the hermitian forms $\langle \cdot, \cdot \rangle_{G_g,a}$, 
we use Weil's positivity criterion. 

\subsection{Weil's positivity} \label{subsec_Weil} 

The linear functional $W:C_c^\infty(\R)\to\C$ 
defined by 
\begin{equation} \label{Eq_303}
\psi~\mapsto~W(\phi):=\sum_{\gamma} \widehat{\psi}(\gamma) 
\end{equation}
is called the Weil distribution. A. Weil~\cite{We52} (see also \cite{Yo92}) 
showed that the RH is true if and only if 
the distribution $W$ is non-negative definite, that is,  
\begin{equation} \label{Eq_304}
W(\psi \ast \widetilde{\psi}) \geq  0 \quad \text{for every}~\psi \in C_c^\infty(\R),
\end{equation}
where 
\begin{equation} \label{Eq_305}
(\psi_1\ast\psi_2)(x):=\int_{-\infty}^{\infty} \psi_1(y)\psi_2(x-y) \, dy, \qquad 
\widetilde{\psi}(x) := \overline{\psi(-x)}. 
\end{equation}
Note that Weil does not mention that it is sufficient to taking 
compactly supported functions as test functions 
at least in \cite{We52, We72} and his collected papers, 
but in Yoshida \cite{Yo92}, the criterion has been formulated in the form above, 
although it is not sure whether it is the first literature.

\subsection{Reduction of sufficiency to Weil's positivity} 

For $0<a<\infty$, we define
\begin{equation} \label{Eq_306}
C(a) =\left\{ \phi \in C_c^\infty(\R)\,\left|~{\rm supp}\,\psi \subset[-a,a] \right.\right\}. 
\end{equation}
Let we set $D=d/dt$ and  
\begin{equation*}
(I_{b}^{(a)}\phi)(t):=\int_{-a}^{t} \phi(u)\,du + b \quad (b \in \C). 
\end{equation*}
Then the maps 
\begin{equation} \label{Eq_307}
C(a)~\overset{D}{\longrightarrow}~\mathfrak{C}_0(a), \qquad 
\mathfrak{C}_0(a)~\overset{I_{0}^{(a)}}{\longrightarrow}~C(a) 
\end{equation}
are bijective and are inverse to each other, 
where  $\mathfrak{C}_0(a)$ is the space defined in \eqref{Eq_111}. 
Therefore, from the following proposition, 
we find that 
the nonnegativity of $\langle \cdot, \cdot \rangle_{G_g,a}$ 
on $\mathfrak{C}_0(a)$ for every $0<a<\infty$ 
implies that the RH is true.

\begin{proposition} \label{Prop_3_1}
Let $0<a<\infty$ and let $C(a)$ be the space defined in \eqref{Eq_306}. Then, 
\begin{equation} \label{Eq_308} 
\langle D\psi_1, D\psi_2 \rangle_{G_g,a} =W(\psi_1\ast \widetilde{\psi_2})
\end{equation}
for every $\psi_1, \psi_2 \in C(a)$.  
\end{proposition}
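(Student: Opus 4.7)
The plan is to unwind both sides of \eqref{Eq_308} into sums over the nontrivial zeros $\gamma$ of $\xi(1/2-iz)$ and then match them term by term. The left-hand side is already of this shape: formula \eqref{Eq_301}, which the paper has derived from \eqref{Eq_109} for any integrable functions supported in $[-a,a]$, applies verbatim to $\phi_i = D\psi_i$. The right-hand side is of the same shape by the very definition \eqref{Eq_303} of the Weil distribution.

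First I would differentiate through the Fourier transform: because $\psi_i \in C_c^\infty(\R)$, integration by parts yields $\widehat{D\psi_i}(z) = -iz\,\widehat{\psi_i}(z)$ with no boundary terms, so in particular $\widehat{D\psi_i}(0)=0$. The factor $(\widehat{D\psi_1}(-\gamma)-\widehat{D\psi_1}(0))/\gamma$ then collapses to $i\,\widehat{\psi_1}(-\gamma)$, and, using that complex conjugation commutes with $D$, the second factor collapses to $-i\,\widehat{\overline{\psi_2}}(\gamma)$. The two factors of $\pm i$ cancel, leaving
\[
\langle D\psi_1, D\psi_2\rangle_{G_g,a} \;=\; \sum_\gamma \widehat{\psi_1}(-\gamma)\,\widehat{\overline{\psi_2}}(\gamma).
\]

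Next, on the Weil side, the convolution theorem gives $\widehat{\psi_1\ast\widetilde{\psi_2}}(\gamma) = \widehat{\psi_1}(\gamma)\,\widehat{\widetilde{\psi_2}}(\gamma)$, while the substitution $x\mapsto -x$ in the defining integral of $\widehat{\widetilde{\psi_2}}$, combined with \eqref{Eq_305}, shows $\widehat{\widetilde{\psi_2}}(\gamma) = \widehat{\overline{\psi_2}}(-\gamma)$. Hence
\[
W(\psi_1\ast\widetilde{\psi_2}) \;=\; \sum_\gamma \widehat{\psi_1}(\gamma)\,\widehat{\overline{\psi_2}}(-\gamma).
\]
The two sums are exchanged by $\gamma\mapsto -\gamma$, which is a bijection on the zero set of $\xi(1/2-iz)$ since the functional equation $\xi(s)=\xi(1-s)$ is equivalent to $\xi(1/2-iz)=\xi(1/2+iz)$. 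This identifies $\langle D\psi_1,D\psi_2\rangle_{G_g,a}$ with $W(\psi_1\ast\widetilde{\psi_2})$.

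The only genuine subtlety is convergence, needed to legitimize the $\gamma\mapsto -\gamma$ relabelling and the termwise manipulations. Since $\psi_i \in C_c^\infty(\R)$ with support in $[-a,a]$, the Paley--Wiener theorem shows that $\widehat{\psi_i}$ is entire of exponential type $a$ and decays faster than any polynomial on any horizontal strip of bounded width, in particular on the strip $|\Im z|\leq 1/2$ which contains all zeros $\gamma$ by \cite[Theorem 2.12]{Tit86}. Combined with the $N(T) = O(T\log T)$ zero-counting estimate, this forces both sums to converge absolutely, and so justifies all the formal manipulations. No other serious difficulty is anticipated.
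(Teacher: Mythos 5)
Your proof is correct and takes essentially the same route as the paper's: both use the integration-by-parts identity $\widehat{D\psi}(z)=-iz\,\widehat{\psi}(z)$ (so $\widehat{D\psi}(0)=0$) to collapse the factors in \eqref{Eq_301} to $\widehat{\psi_1}(-\gamma)\,\widehat{\overline{\psi_2}}(\gamma)$, and then identify the resulting sum with $W(\psi_1\ast\widetilde{\psi_2})$ via \eqref{Eq_303} and \eqref{Eq_305}. You simply make explicit the convolution-theorem step, the identity $\widehat{\widetilde{\psi_2}}(\gamma)=\widehat{\overline{\psi_2}}(-\gamma)$, and the $\gamma\mapsto-\gamma$ relabeling that the paper leaves implicit, and you add a Paley--Wiener convergence justification that the paper omits.
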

\begin{proof} If $\phi=D\psi$ and $\widehat{\phi}(0)=0$, we have 
$z^{-1}(\widehat{\phi}(z)-\widehat{\phi}(0)) = -i\widehat{\psi}(z)$ 
by integration by parts. 
Therefore, 
$\langle D\psi_1, D\psi_2 \rangle_{G_g,a} = \sum_\gamma \widehat{\psi_1}(-\gamma)
\widehat{\overline{\psi_2}}(\gamma)$ by \eqref{Eq_301}. 
The right-hand side is equal to $W(\psi_1\ast \widetilde{\psi_2})$ 
by \eqref{Eq_303} and \eqref{Eq_305}. 
\end{proof}

\subsection{Relation between $W$ and $\Psi$} \label{Section_3_4}

The pointwise positivity of Theorem \ref{Thm_1_7} 
turns out to be a special case of the Weil positivity. 
For $t>0$, we consider the triangular function
\begin{equation} \label{Eq_309}
\Delta_t(x)
=
\begin{cases}
\displaystyle{\frac{1}{2}(t-|x|)}, & |x| \leq t; \\[5pt]
0, &  |x|>t.
\end{cases}
\end{equation}
Then $W(\Delta_t)=\Psi(t)$, because 
\begin{equation} \label{Eq_310} 
\int_{-\infty}^{\infty} \Delta_t(x) \, e^{izx}  dx = \frac{1-\cos(zt)}{z^2} 
\end{equation}
for any $z \in \C$. The triangular function $\Delta_t$ 
is expressed as $\Delta_t = R_t \ast \widetilde{R_t}$ 
by the rectangular function
\begin{equation*}
R_t(x)
=
\begin{cases}
\displaystyle{\frac{1}{\sqrt{2}}}, & |x| < t/2; \\[8pt]
\displaystyle{\frac{1}{2\sqrt{2}}}, & |x| = t/2; \\[8pt]
0, &  |x|>t/2.
\end{cases}
\end{equation*}
Therefore, 
\begin{equation} \label{Eq_311}
\Psi(t) = W(\Delta_t) = W(R_t \ast \widetilde{R_t}). 
\end{equation}
This one is similar to the formula of Li coefficients 
by the Weil distribution $W$ in \cite{BoLa99}.

\subsection{Relation between $W$ and $g$} \label{Section_2_5}

For a test function $\phi \in C_c^\infty(\R)$, 
\begin{equation*}
\int_{-\infty}^{\infty} (-g(t))\phi''(t) \, dt 
= \sum_{\gamma} \frac{1}{\gamma^2} ( \widehat{\phi''}(\gamma) - \widehat{\phi''}(0) ) 
= \sum_{\gamma} \widehat{\phi}(\gamma) = W(\phi),  
\end{equation*}
where the prime means differentiation. Therefore we find that 
\begin{equation*}
-g''(t)=\Psi''(t) = \sum_\gamma e^{i\gamma t} = W(t)
\end{equation*}
as a distribution, where we understand 
the distribution as $W(\phi) = \int W(t) \phi(t) \, dt$. 
From this relation, the Weil distribution $W(t)$ can be regarded as 
an ``accelerant'' of the screw function  $g(t)$ of $\xi(s)$ 
in the sense of \cite[Introduction]{KrLa85}. 

%
\section{Preparations for the proof of Theorem \ref{Thm_1_4}} \label{Section_4}
%

\subsection{Positivity of $\Psi(t)$} 

It is observed that $\Psi(t)$ is positive in a small range $[0,t_0)$ of $t$ 
by numerical calculation by a computer (Figure \ref{fig_01}). 
This observation is unconditionally proven 
and used to prove Theorem \ref{Thm_1_4}.

\begin{figure}
 \begin{minipage}{12cm}
 \centering
\includegraphics[width=12cm]{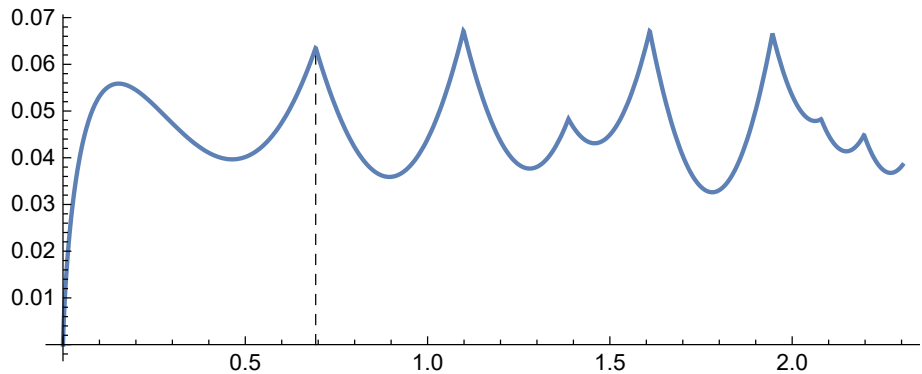}
\caption{Plot of $\Psi(t)$ in the range $0 \leq t \leq \log 10$. 
The vertical dashed line is located at $t=\log 2$.}
\label{fig_01}
  \end{minipage} 
\end{figure}

\begin{theorem} \label{Thm_4_1}
There exists $t_0 > \log 2$ such that $\Psi(t)>0$ for $0< t < t_0$.  
\end{theorem}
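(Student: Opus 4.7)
The plan is to establish $\Psi(t) > 0$ on $(0, \log 2]$ directly from the reduced formula (with the prime sum absent), and then invoke continuity to extend positivity to some $t_0 > \log 2$. A crucial preliminary observation is that the von Mangoldt sum in \eqref{Eq_101} vanishes identically on $(0, \log 2]$: no $n$ with $\Lambda(n)\ne 0$ satisfies $n\le e^t$ for $t < \log 2$, and at $t = \log 2$ the sole relevant term (for $n=2$) carries the factor $t - \log 2 = 0$. So on this interval $\Psi(t)$ reduces to the fully explicit expression
\[
\Psi(t) = 8(\cosh(t/2)-1) + \frac{t}{2}\left[\frac{\Gamma'}{\Gamma}(1/4) - \log\pi\right] + \frac{1}{4}\sum_{n=0}^\infty \frac{1 - e^{-(2n+1/2)t}}{(n+1/4)^2},
\]
a sum of elementary functions together with a termwise nonnegative Lerch-type series.

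For $t$ near $0$, I would derive the expansion $\Phi(z,1,1/4) = -\log(1-z)/z - \gamma_0 - \frac{\Gamma'}{\Gamma}(1/4) + o(1)$ as $z \to 1^-$ by comparing $\sum z^n/(n+1/4)$ with $\sum z^n/(n+1) = -\log(1-z)/z$ and invoking Abel's theorem on the difference, the limit being identified via \eqref{Eq_205}. Differentiating the reduced formula gives
\[
\Psi'(t) = 2(e^{t/2}-e^{-t/2}) + \frac{1}{2}\left[\frac{\Gamma'}{\Gamma}(1/4) - \log\pi\right] + \frac{1}{2}e^{-t/2}\Phi(e^{-2t},1,1/4),
\]
and substituting the expansion into this and integrating yields $\Psi(t) = \frac{t}{2}\log\frac{1}{2\pi t} + \frac{(1-\gamma_0)t}{2} + O(t^2)$ as $t \to 0^+$, which is strictly positive for small $t > 0$. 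With explicit error bounds this produces a concrete $\delta \in (0,\log 2)$ such that $\Psi(t) > 0$ on $(0,\delta]$.

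The main work is extending positivity to all of $(0,\log 2]$. Further differentiation on this interval gives
\[
\Psi''(t) = (e^{t/2}+e^{-t/2}) - \frac{e^{-t/2}}{1-e^{-2t}},
\]
whose sign coincides with that of $e^{t/2} - e^{-3t/2} - e^{-5t/2}$; setting this to zero produces $e^{3t} = e^t + 1$, uniquely solved (for $t > 0$) by $t^* = \log u_0$ where $u_0$ is the plastic constant ($u_0^3 = u_0 + 1$), so $t^* \approx 0.281 < \log 2$, with $\Psi'' < 0$ on $(0,t^*)$ and $\Psi'' > 0$ on $(t^*,\log 2]$. Hence $\Psi'$ is strictly decreasing then strictly increasing on $(0,\log 2]$, so $\Psi$ has at most one interior local minimum $t_\star \in (0,\log 2)$. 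To bound $\Psi(t_\star) > 0$, I would estimate the deficit $\int_{\{\Psi' < 0\}} |\Psi'(s)|\,ds$ explicitly from the series form of $\Psi'$ and show it is strictly smaller than the height $\Psi$ has attained at the preceding local maximum, whose lower bound is furnished by the asymptotic of the previous paragraph.

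Finally, once $\Psi > 0$ on $(0,\log 2]$ is in hand (in particular $\Psi(\log 2) > 0$), continuity of $\Psi$ at $\log 2$ — the new $n=2$ term in the prime sum switches on at value $0$, contributing no jump — yields some $t_0 > \log 2$ with $\Psi > 0$ on $[\log 2, t_0)$, completing the proof. The hard part will be the interior-minimum estimate in the preceding paragraph: numerically $\Psi(t_\star) \approx 0.04$, so the analytic bounds on the downward excursion of $\Psi'$ must be quantitatively sharp enough to separate this minimum from zero without any numerical input.
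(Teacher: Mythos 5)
Your structural analysis of $\Psi$ on $(0,\log 2]$ is correct and verifiable: the prime sum vanishes there (including the $n=2$ boundary term), the formulas for $\Psi'$ and $\Psi''$ follow from termwise differentiation of the Lerch series, the sign of $\Psi''$ is governed by $e^{3t}-e^t-1$, whose unique positive root $t^*=\log u_0$ (plastic constant $u_0^3=u_0+1$, $t^*\approx 0.281$) lies in $(0,\log 2)$, and your small-$t$ expansion $\Psi(t)=\tfrac{t}{2}\log\tfrac{1}{2\pi t}+\tfrac{(1-\gamma_0)t}{2}+O(t^2)$ is consistent with the series given in the paper's Section~\ref{Section_6}. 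The convexity picture — $\Psi'$ strictly decreasing then increasing, hence at most one interior local minimum $t_\star$ — is a genuinely nicer way to organize the argument than the paper's.

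However, there is a real gap: the crucial step, showing $\Psi(t_\star)>0$, is only sketched as a plan (``I would estimate the deficit\ldots and show it is strictly smaller\ldots''), and you yourself flag it as the hard part. This is precisely where the paper's proof resorts to computer calculation: it numerically locates the zeros $t_1\approx 0.1526$ and $t_2\approx 0.4640$ of $\Psi'$ and evaluates $\Psi(t_2)\approx 0.0397>0$. Quantitatively, the margin is slim — $\Psi$ drops only from roughly $0.045$ at the local maximum to roughly $0.040$ at the local minimum — so the analytic bound on $\int_{\{\Psi'<0\}}|\Psi'|$ would have to beat the lower bound on the local maximum by a few thousandths, and the small-$t$ asymptotic you invoke (with uncontrolled $O(t^2)$ error) does not come close to delivering a usable lower bound on $\Psi(t_1)$ with $t_1\approx 0.15$ without the same kind of explicit numerical error control you are trying to avoid. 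In effect you would be re-deriving the numerics by hand. It is worth noting that the paper explicitly offers a non-numerical fallback: Theorem~\ref{Thm_4_2} gives $t_0>0$ (sufficient for Theorem~\ref{Thm_1_4}) by a genuinely analytic argument, and the precise threshold $t_0>\log 2$ is what forces the numerics. Your proposal as written establishes neither the weaker analytic statement nor the sharper numerical one.
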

\begin{proof} 
This follows from Theorem \ref{Thm_4_2} below for the weaker result $t_0>0$, 
which is sufficient for Theorem \ref{Thm_1_4}, 
but here we prove it in another direct way with the help of numerical calculation by computer.
Differentiating \eqref{Eq_101} and 
noting  $(\Gamma'/\Gamma)(1/4)=-\gamma_0-(\pi/2)-3\log 2$, 
we have 
$\Psi'(t) =  2(e^{t/2}-e^{-t/2})  +c - {\rm arctan}(e^{t/2}) + {\rm arctanh}(e^{-t/2})$
with $c= (\pi/4) - (\gamma_0+3\log 2)/2$, 
since 
there is no contribution from the primes for $\Psi(t)$ on $[0,\log 2)$. 
Then we find that all zeros of $\Psi'(t)$ on  $[0,\log 2)$ are 
$t_1=0.152631\dots$ and $t_2=0.464002\dots$ by numerical calculation.  
Since ${\rm arctanh}(e^{-t/2}) \to + \infty$ as $t \to 0+$, 
the value of $\Psi'(t)$ is positive on $[0,t_1)$ and $(t_2,\log 2)$ and is negative on $(t_1,t_2)$. 
Further, $\Psi(0)=0$ by definition and $\Psi(t_2)=0.0396618 \dots>0$. 
Therefore, $\Psi(t)$ is positive on $[0,\log 2]$ by the continuity. 
\end{proof}

\subsection{Yoshida's results} \label{subsec_Yoshida}

Yohida~\cite{Yo92} studied a hermitian form on $C_c^\infty(\R)$ defined by 
\[
\langle \phi,\psi \rangle_W := W(\phi \ast \widetilde{\psi}).
\] 
The space $C(a)$ in \eqref{Eq_306} was first introduced by him 
to localize the positivity \eqref{Eq_304}. 
For $0<a<\infty$ and $N \in \Z_{\geq 0}$, 
he also introduced the spaces 
\begin{equation*} 
K(a) =\left\{ 
\psi(t)= 
\begin{cases}
h(t), &~(|t| \leq a)~\text{for some $h \in C^\infty(\R)$ with the period $2a$}, \\
0, &~(|t|>a) 
\end{cases}\right\}
\end{equation*}
and 
\begin{equation} \label{Eq_401}
K_N(a) = \left\{ \psi \in K(a) \left|~\int_{-a}^{a} \psi(x)\exp\left(\frac{\pi i n x}{a}\right) \, dx=0~
\text{for all $n \in \Z$, $|n| \leq N$}
\right.\right\}. 
\end{equation}

Yoshida proved without any assumption that 
the hermitian form $\langle \cdot,\cdot \rangle_W$ is positive definite on $K(a)$ 
if $a>0$ is sufficiently small (\cite[Lemma 2]{Yo92}). 
Connes--Consani \cite{CC21a} provides
 an operator theoretic conceptual reason for this result. 
Yoshida also proved that, for given $a_0>0$ and $\mu>0$, there exists $N \geq 0$ such that 
$\langle \phi,\phi \rangle_W \geq \mu \Vert \phi \Vert_{L^2}$ 
for every $\phi \in K_N(a)$ and $0<a \leq a_0$ (\cite[Lemma 3]{Yo92})

\subsection{Positivity of $G_g(t,u)$} 

If the RH is true, $g(t)$ of \eqref{Eq_101} and \eqref{Eq_108} 
belongs to the class $\mathcal{G}_\infty$ by Theorem \ref{Thm_1_2}. 
Therefore, the kernel $G_g(t,u)$ is non-negative definite on $\R$, 
but we can directly confirm that it is non-negative definite on $\R$ as 
\begin{equation*}
\aligned 
\sum_{i,j} G_g(t_i,t_j) \xi_i \overline{\xi_j}
& = 
\sum_{\gamma} \frac{1}{\gamma^2}
\left| \sum_{i} (1-e^{i\gamma t_i})\xi_i\right|^2 \geq 0 
\endaligned 
\end{equation*}
by using \eqref{Eq_109}, 
since all zeros $\gamma$ of $\xi(1/2-iz)$ are real. 
Hence the restriction $g|_{[-2a,2a]}$ belongs to the class $\mathcal{G}_a$ 
for every $a>0$.

As an analog of Yoshida's result above, 
we unconditionally prove that 
the restriction $g|_{[-2a,2a]}$ belongs to $\mathcal{G}_a$  
when $a>0$ is sufficiently small. 
If $g|_{[-2a,2a]}$ belongs to $\mathcal{G}_a$, 
the kernel $G_g(t,u)$ is non-negative definite on $(-a,a)$. 
It implies the nonnegativity of $\Psi(t)$ on $(-a,a)$, because 
\begin{equation*}
0 \leq G_g(t,t) = g(0) - g(t) - g(-t) = 0 - 2g(t) = 2\Psi(t)
\end{equation*}
for $t \in (-a,a)$. In particular, Theorem \ref{Thm_4_1} 
in a weak form $t_0=a>0$ follows from this. 

\begin{theorem} \label{Thm_4_2}
There exists $a_0>0$ such that 
$G_g(t,u)$ is positive definite on $L^2(-a,a)$  if  $0< a < a_0$. 
In particular, the restriction $g|_{[-2a,2a]}$ belongs to the class $\mathcal{G}_a$ 
for every  $0< a < a_0$. 
\end{theorem}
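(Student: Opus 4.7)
My plan exploits two structural features of the set-up. First, for $a < (\log 2)/2$, the prime sum $\sum_{n \leq e^t}\Lambda(n)/\sqrt{n}\,(t-\log n)$ in \eqref{Eq_101} vanishes identically on $(-2a, 2a)$, so on the square $(-a,a)^2$ the kernel $G_g(t,u) = -\Psi(t-u)+\Psi(t)+\Psi(u)$ depends only on the smooth ``Archimedean'' part $\Psi_{\mathrm{A}}$ of $\Psi$ (the right-hand side of \eqref{Eq_101} with the prime sum removed). Second, from the expansion used in the proof of Theorem~\ref{Thm_4_1}, $\Psi(t) \sim -\frac{1}{2}|t|\log|t|$ as $t \to 0$, so the dominant near-diagonal behavior of $G_g(t,u)$ is governed by a logarithmic singularity rather than by the linear or quadratic building blocks.

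Next, I would decompose $\Psi_{\mathrm{A}}$ via $4(e^{t/2}+e^{-t/2}-2) = 8(\cosh(t/2)-1)$ and the Hurwitz--Lerch expansion $e^{-t/2}\Phi(e^{-2t},2,1/4) = \sum_{n \geq 0}(n+1/4)^{-2}\,e^{-(2n+1/2)t}$ for $t > 0$, so that $G_{g_{\mathrm{A}}}$ splits into: (i) a ``cosh'' piece, whose contribution to the quadratic form is $O(a^4\|\phi\|_{L^2}^2)$ by Taylor expansion; (ii) a linear piece $(|\kappa|/2)(|t-u|-|t|-|u|)$ with $\kappa := (\Gamma'/\Gamma)(1/4)-\log\pi < 0$, which is negative semi-definite and contributes order $|\kappa|a^3\|\phi\|_{L^2}^2$; and (iii) a positive-weighted sum of Poisson-type kernels $e^{-\mu|t-u|}-e^{-\mu|t|}-e^{-\mu|u|}+1$ (with $\mu = 2n+1/2$), each positive definite by Schoenberg's theorem (since $1-e^{-\mu|t|}$ is conditionally negative definite). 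The central estimate is that for small $a$, the quadratic form associated with (iii) is of order $a^3\log(1/a)\|\phi\|_{L^2}^2$ (the logarithmic enhancement reflecting the divergence $\sum_n 1/(n+1/4) = \infty$), which dominates the $O(a^3)$ negative contribution from (ii) as $a \to 0$.

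The main obstacle is making this estimate uniform in $\phi \in L^2(-a,a)$. For the constant test function $\phi \equiv 1$ the explicit calculation goes through cleanly and matches the near-origin behavior $\Psi(t) \sim -\frac{1}{2}|t|\log|t|$. For general $\phi$, one would split $\phi$ into its constant and mean-zero parts and handle each separately. The mean-zero part can be attacked via the identity \eqref{Eq_308} combined with Yoshida's Lemma~2 (see Section~\ref{subsec_Yoshida}) giving positivity of $\langle\cdot,\cdot\rangle_W$ on $K(a)$ for small $a$, after suitably transferring to $\mathfrak{C}_0(a)$. The cross terms between the constant and mean-zero parts can then be controlled by Cauchy--Schwarz using the positive lower bounds just established, completing the quantitative comparison.
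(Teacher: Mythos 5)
Your physical-space decomposition of the Archimedean part of the kernel into a cosh piece, a linear piece $\kappa\,K(t,u)$ with $\kappa=(\Gamma'/\Gamma)(1/4)-\log\pi<0$, and a positively weighted sum of Poisson kernels $1-e^{-\mu|t|}-e^{-\mu|u|}+e^{-\mu|t-u|}$ is the physical-space shadow of the paper's Fourier-side split $\langle\phi,\phi\rangle_{G_g,a}=I_0+I_1+I_\infty$, and your heuristic that the Poisson sum carries an extra $\log(1/a)$ factor overwhelming $\kappa$ as $a\to 0$ is precisely the mechanism the paper exploits via $\Re\bigl[\tfrac12(\Gamma'/\Gamma)(s/2)-\tfrac12\log\pi\bigr]\sim\tfrac12\log|z|$. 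Your observation that the prime sum vanishes identically on $(-a,a)^2$ once $a<(\log 2)/2$ is a genuine simplification the paper does not use (it retains $I_1$ and bounds it by Cauchy--Schwarz against a shifted kernel $K(t,u;\pm c)$). However, two steps in your sketch are real gaps, and the paper's route is designed exactly to avoid them.

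First, a pointwise kernel inequality $G_{g_{\mathrm{HL}}}(t,u)\gtrsim\log(1/a)\,K(t,u)$ does not, by itself, transfer to an inequality of quadratic forms: one needs $G_{g_{\mathrm{HL}}}-\lambda K$ to be a non-negative definite \emph{kernel}, which is strictly stronger than pointwise non-negativity because $\phi(u)\overline{\phi(t)}$ is not sign-definite as a function of $(t,u)$. The paper's identity \eqref{Eq_409}, which writes $\int\!\!\int\phi(u)\overline{\phi(t)}K(t,u)\,du\,dt=\frac{1}{2\pi}\int_\R|\Phi_1(\phi;z)|^2\,dz$, converts the kernel comparison into a comparison of multipliers weighting $|\Phi_1(\phi;z)|^2$, where pointwise bounds legitimately transfer to the quadratic form. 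Relatedly, your scaling claims ($O(a^4)$, $|\kappa|a^3$, $a^3\log(1/a)$) look like trace or constant-test-function estimates, not operator-norm bounds: the operator norm of $K$ on $L^2(-a,a)$ is $\asymp a^2$, and none of those three scalings controls the form uniformly over $\phi\in L^2(-a,a)$. The key is the $\log(1/a)$ factor, not the power of $a$, and you need to realize it as an operator inequality.

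Second, and more seriously, the proposed treatment of general $\phi$ --- split into constant plus mean-zero, handle the mean-zero part via \eqref{Eq_308} and Yoshida's Lemma 2, then control the cross term by Cauchy--Schwarz --- is circular. The inequality $|\langle 1,\phi_0\rangle_{G_g,a}|^2\le\langle 1,1\rangle_{G_g,a}\,\langle\phi_0,\phi_0\rangle_{G_g,a}$ \emph{is} positive semidefiniteness of the form on $\mathrm{span}\{1,\phi_0\}$, which is an instance of the theorem you are proving; it is not available before the fact, and Cauchy--Schwarz in $L^2$ does not bound the $G_g$ cross term. Yoshida's Lemma 2 together with \eqref{Eq_308} does give non-negativity of $\langle\cdot,\cdot\rangle_{G_g,a}$ on $\mathfrak{C}_0(a)$ (and hence, by density and continuity of the kernel, on the mean-zero subspace of $L^2(-a,a)$) for small $a$, but it supplies no quantitative lower bound against $\|\phi_0\|_{L^2}^2$ that could be played off against an uncontrolled cross term. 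The paper sidesteps the splitting entirely: every piece $I_0$, $I_1$, $I_\infty$ is bounded above or below against the single reference quantity $\int_\R|\Phi_1(\phi;z)|^2\,dz$, which is strictly positive for every nonzero $\phi\in L^2(-a,a)$ --- constants included --- so no decomposition of $\phi$ and no cross-term estimate are needed.
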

\begin{proof} 
We first introduce a few auxiliary functions. 
For $\phi \in L^2(-a,a)$, we define the transformation 
\begin{equation} \label{Eq_402} 
\Phi_1(\phi,z) := \frac{\widehat{\phi}(z)-\widehat{\phi}(0)}{z} = \int_{-a}^{a} \phi(t) \frac{e^{izt}-1}{z} \, dt. 
\end{equation}
For reals $t$, $u$, and a non-zero real $y$, we define 
\[
\aligned 
K(t,u) : & = \frac{1}{2\pi} \int_{-\infty}^{\infty} 
\frac{e^{izt}-1}{z}\frac{e^{-izu}-1}{z}
 \, dz, \\
K(t,u;y) : & = 
\frac{1}{2\pi}  \int_{-\infty+iy}^{\infty+iy} 
\frac{e^{izt}-1}{z}\frac{e^{-izu}-1}{z}
 \, dz. 
\endaligned
\]
Then, we find that 
\begin{equation} \label{Eq_403}  
\aligned 
K(t,u) 
& = \frac{1}{2} (|t|+|u|-|t-u|) 
= 
\begin{cases}
~u, & \text{if}~t>0,~u>0,~t-u>0, \\
~t, & \text{if}~t>0,~u>0,~t-u<0, \\
~0, & \text{if}~t<0,~u>0,~t-u<0, \\
~-u, & \text{if}~t<0,~u<0,~t-u<0, \\
~-t, & \text{if}~t<0,~u<0,~t-u>0, \\
~0, & \text{if}~t>0,~u<0,~t-u>0, \\
\end{cases}
\endaligned
\end{equation}
and 
\begin{equation} \label{Eq_404}  
\aligned 
K(t,u;y) 
& = 
\frac{1}{2y} \Bigl( -e^{-y(t+|t|)} -e^{-y(u+|u|)} + e^{-y(t+u+|t-u|)} + 1 \Bigr) \\
 \\
& = 
\begin{cases}
~\displaystyle{\frac{1-e^{-2uy}}{2y}}, & \text{if}~t>0,~u>0,~t-u>0, \\[8pt]
~\displaystyle{\frac{1-e^{-2ty}}{2y}}, & \text{if}~t>0,~u>0,~t-u<0, \\[8pt]
~0, & \text{if}~t<0,~u>0,~t-u<0, \\
~\displaystyle{\frac{e^{-2uy}-1}{2y}}, & \text{if}~t<0,~u<0,~t-u<0, \\[8pt]
~\displaystyle{\frac{e^{-2ty}-1}{2y}}, & \text{if}~t<0,~u<0,~t-u>0, \\[8pt]
~0, & \text{if}~t>0,~u<0,~t-u>0, \\
\end{cases}
\endaligned
\end{equation}
by an elementary calculus. 

We then prove the equality 
\begin{equation*} 
\langle \phi, \phi \rangle_{G_g,a} = I_0 + I_1 + I_\infty
\end{equation*}
with 
\begin{equation} \label{Eq_405} 
I_0
 =
 \frac{1}{2\pi} \int_{-\infty+ic}^{\infty+ic} 
\left(
\frac{1}{s-1} + \frac{1}{s}
 \right)
\Bigl(
 \Phi_1(\phi;z)\overline{\Phi_1(\phi;\bar{z})}
+
 \Phi_1(\phi;-z)\overline{\Phi_1(\phi;-\bar{z})}
\Bigr) 
\, dz, 
\end{equation}
\begin{equation} \label{Eq_406} 
I_1
 =
 \frac{1}{2\pi} \int_{-\infty+ic}^{\infty+ic} 
\frac{\zeta'}{\zeta}(s) 
\Bigl(
 \Phi_1(\phi;z)\overline{\Phi_1(\phi;\bar{z})}
+
 \Phi_1(\phi;-z)\overline{\Phi_1(\phi;-\bar{z})}
\Bigr) 
\, dz, 
\end{equation}
\begin{equation} \label{Eq_407}  
I_\infty
 =
\frac{1}{2\pi} \int_{-\infty}^{\infty} 
\Re\left[
\frac{1}{2}\frac{\Gamma'}{\Gamma}\left(\frac{s}{2}\right)- \frac{1}{2}\log \pi 
 \right] 
\Bigl(
|\Phi_1(\phi;z)|^2
+
|\Phi_1(\phi;-z)|^2
\Bigr)
 \, dz,
\end{equation}
where $s=1/2-iz$ as elsewhere. We put
\begin{equation} \label{Eq_408}  
\aligned 
g_0(t) & := - 4(e^{t/2}+e^{-t/2}-2), \quad 
g_1(t) := \sum_{n \leq e^{|t|}} \frac{\Lambda(n)}{\sqrt{n}}(|t|-\log n),  \\
g_\infty(t) & :=  
- \frac{|t|}{2}\left[ \frac{\Gamma'}{\Gamma}\left(\frac{1}{4}\right) - \log \pi \right] -
\frac{1}{4}\left( C - e^{-|t|/2}\Phi(e^{-2|t|},2,1/4) \right) 
\endaligned 
\end{equation}
so that $I_\ast=\langle \phi,\phi \rangle_{G_{g_\ast},a}$ for $\ast \in \{0,1,\infty\}$ 
with the same notation as \eqref{Eq_110}
and $\langle \phi,\phi \rangle_{G_{g},a}=I_0+I_1+I_\infty$. 
We will prove \eqref{Eq_405}, \eqref{Eq_406}, and \eqref{Eq_407} 
for $I_0$, $I_1$, and $I_\infty$, respectively.  

For $t>0$ and $c>1/2$, 
taking the inversion of \eqref{Eq_201} 
and 
noting $g_0(0)=0$,    
\begin{equation*}
g_0(t)
= \frac{1}{2\pi} \int_{-\infty+ic}^{\infty+ic} 
\left(\frac{1}{s-1} + \frac{1}{s} \right)\, \frac{e^{-izt}-1}{z^2} \, dz. 
\end{equation*}
Because we find that the integral on the right-hand side is zero for $t<0$ 
by moving $c \to +\infty$, 
we can write
\begin{equation*}
\aligned 
g_0(t)
&= \frac{1}{2\pi} \int_{-\infty+ic}^{\infty+ic} 
\left(\frac{1}{s-1} + \frac{1}{s} \right)\,
\left( \frac{e^{izt}-1}{z^2} + \frac{e^{-izt}-1}{z^2} \right) \, dz
\endaligned 
\end{equation*}
for any $t \in \R$. Therefore, 
\begin{equation*}
\aligned 
G_{g_0}(t,u)
& = g_0(t-u) - g_0(t) - g_0(-u) +g_0(0) \\
& = \frac{1}{2\pi} \int_{-\infty+ic}^{\infty+ic} 
\left(
\frac{1}{s-1} + \frac{1}{s}
 \right)
\left(
\frac{e^{izu}-1}{z}\frac{e^{-izt}-1}{z} 
+ \frac{e^{-izu}-1}{z}\frac{e^{izt}-1}{z}
\right)
 \, dz. 
\endaligned 
\end{equation*}
From this formula and definitions \eqref{Eq_110} and \eqref{Eq_402}, 
we obtain \eqref{Eq_405}. 
In the same way, \eqref{Eq_406} and \eqref{Eq_407} are obtained 
by using \eqref{Eq_202}, \eqref{Eq_203}, and \eqref{Eq_204}, 
but for \eqref{Eq_407}, we move the horizontal line of the integration 
to the real line in the final step by noting the absence of poles $(\Gamma'/\Gamma)(s/2)$ 
in $\Re(s)>0$. 

Now we prove the positivity of $\langle \phi, \phi \rangle_{G_g,a}$ for small $a>0$. 
We first note the equality 
\begin{equation} \label{Eq_409}
\aligned 
\frac{1}{2\pi} \int_{-\infty}^{\infty} 
& |\Phi_1(\phi;z)|^2
 \, dz \\
& = 
\frac{1}{2\pi} 
\int_{-\infty}^{\infty} 
\left( \int_{-a}^{a} \phi(u) \, \frac{e^{iuz}-1}{z} \, du \right) 
\left( \int_{-a}^{a} \overline{\phi(t)} \, \frac{e^{-itz}-1}{z} \, dt \right) \, dz \\
&= 
\int_{-a}^{a} \int_{-a}^{a} \phi(u)\overline{\phi(t)}\,K(t,u)
\, dudt.
\endaligned 
\end{equation}

We fix $c>1$ in \eqref{Eq_405} and \eqref{Eq_406} and set 
\[
C_1 = \max_{\Re(s)=1+c}\left\{
\left| \frac{1}{s-1} + \frac{1}{s} \right|, ~
\left| \frac{\zeta'}{\zeta}(s) \right|
\right\}. 
\]
Then, for $F(s)=((s-1)^{-1}+s^{-1})$ or $(\zeta'/\zeta)(s)$, 
\[
\aligned 
\, & \left| \frac{1}{2\pi} \int_{-\infty+ic}^{\infty+ic} 
F(s)\, \Phi_1(\phi;\pm z)\overline{\Phi_1(\phi;\pm\bar{z})}
 \, dz 
\right| \\
& \quad \leq C_1 \cdot 
\left[ \frac{1}{2\pi} \int_{-\infty+ic}^{\infty+ic} 
|\Phi_1(\phi; \pm z)|^2  
 \, dz
\right]^{1/2}
\left[ \frac{1}{2\pi} \int_{-\infty+ic}^{\infty+ic} 
|\Phi_1(\phi;\pm \bar{z})|^2  
 \, dz
\right]^{1/2}
\endaligned 
\]
by the Schwartz inequality. 
For the quantities on the right-hand side, we have 
\[
\aligned 
\frac{1}{2\pi} \int_{-\infty + ic}^{\infty + ic} 
|\Phi_1(\phi; \pm z)|^2
 \, dz
&= 
\int_{-a}^{a} \int_{-a}^{a} \phi(u)\overline{\phi(t)}\,K(t,u;\pm c) \, dudt
\endaligned 
\]
as in \eqref{Eq_409}. 
If we fix a real number $a_1>0$, there exists $C_2=C_2(a_1)>0$ such that 
\[
K(t,u; \pm c) \leq C_2 K(t,u) 
\]
for any $|t|\leq a$ and $|u| \leq a$ if $0<a \leq a_1$ 
by \eqref{Eq_403} and \eqref{Eq_404}. 
Therefore, 
\[
\aligned 
\frac{1}{2\pi}  \int_{-\infty + ic}^{\infty + ic} 
|\Phi_1(\phi; \pm z)|^2
 \, dz 
& \leq C_2 
\int_{-a}^{a} \int_{-a}^{a} \phi(u)\overline{\phi(t)}\,K(t,u)
\, dudt  \\ 
& = C_2 \cdot \frac{1}{2\pi}  \int_{-\infty}^{\infty} 
|\Phi_1(\phi; \pm z)|^2
 \, dz. 
\endaligned 
\]
This leads to 
\[
I_0+I_1
 \leq 2 \, C_1C_2 \cdot 
\frac{1}{2\pi}  \int_{-\infty}^{\infty} 
|\Phi_1(\phi;z)|^2
 \, dz 
+ 2 \, C_1C_2 \cdot 
\frac{1}{2\pi}  \int_{-\infty}^{\infty} 
|\Phi_1(\phi;-z)|^2
 \, dz .
\]

Following the above, we take $C>0$ such that $C > 3C_1C_2$. 
Since 
\begin{equation} \label{Eq_410}
\Re\left[
\frac{\Gamma'}{\Gamma}(\sigma+it) 
\right] = \log |\sigma+it| + O(|\sigma+it|^{-1})
\end{equation}
for a fixed $\sigma$, we can fine $t_0>0$ 
so that 
\[
\Re\left[
\frac{\Gamma'}{\Gamma}\left(\frac{1}{4}-\frac{iz}{2} \right) 
- \frac{1}{2} \log \pi
\right] \geq C
\]
if $z \in \R$ and $|z| \geq t_0$.
We put 
\[
C_0 = \max_{|z| \leq t_0}\Re\left[
\frac{\Gamma'}{\Gamma}\left(\frac{1}{4}-\frac{iz}{2} \right) 
- \frac{1}{2} \log \pi
\right]. 
\]
Then we have 
\[
\aligned 
\frac{1}{2\pi} &\int_{-\infty}^{\infty} 
\Re\left[
\frac{1}{2}\frac{\Gamma'}{\Gamma}\left(\frac{s}{2}\right)- \frac{1}{2}\log \pi 
 \right]
|\Phi_1(\phi;z)|^2
 \, dz \\
& \geq C \cdot 
\frac{1}{2\pi} \int_{|z| \geq t_0}
|\Phi_1(\phi;z)|^2
 \, dz
-
C_0 \cdot 
\frac{1}{2\pi} \int_{|z| \leq t_0}
|\Phi_1(\phi;z)|^2
 \, dz \\
& \geq C \cdot 
\frac{1}{2\pi} \int_{-\infty}^{\infty} 
|\Phi_1(\phi;z)|^2
 \, dz
-
(C+C_0) \cdot 
\frac{1}{2\pi} \int_{|z| \leq t_0}
|\Phi_1(\phi;z)|^2
 \, dz.
\endaligned 
\]

From the above calculations for $I_0$, $I_1$, and $I_\infty$, 
we obtain 
\begin{equation} \label{Eq_411}  
\aligned 
\langle & \phi, \phi \rangle_{G_g,a} \\
& \geq 
(C-2C_1C_2) \frac{1}{2\pi} \int_{-\infty}^{\infty} 
|\Phi_1(\phi;z)|^2
 \, dz-
(C+C_0) \cdot 
\frac{1}{2\pi} \int_{|z| \leq t_0}
|\Phi_1(\phi;z)|^2
 \, dz \\
&  +
(C-2C_1C_2) \frac{1}{2\pi} \int_{-\infty}^{\infty} 
|\Phi_1(\phi;-z)|^2
 \, dz-
(C+C_0) \cdot 
\frac{1}{2\pi} \int_{|z| \leq t_0}
|\Phi_1(\phi;-z)|^2
 \, dz.
\endaligned 
\end{equation}

For the second and fourth integrals on the right-hand side, 
\[
\aligned 
\frac{1}{2\pi} & \int_{|z| \leq t_0} 
|\Phi_1(\phi;z)|^2
 \, dz 
= 
\int_{-a}^{a} \int_{-a}^{a} \phi(u)\overline{\phi(t)}
\left( \frac{1}{2\pi} 
\int_{|z| \leq t_0} \frac{e^{iuz}-1}{z}\frac{e^{-itz}-1}{z} \, dz \right) 
\, dudt.
\endaligned 
\]
If $0<a<a_1$ is sufficiently small, we have 
\[
\frac{1}{2}
\left( 
\frac{e^{iuz}-1}{z}\frac{e^{-itz}-1}{z} 
+
\frac{e^{-iuz}-1}{z}\frac{e^{itz}-1}{z} 
\right)
= ut\, ( 1 + O(|z|) ) \ll a K(t,u)
\]
for $z \in \R$ with $|z| \leq t_0$, 
since both $|uz|$ and $|tz|$ are small in the range $|z| \leq t_0$. 
Therefore, there exists $0<a_2<a_1$ and $C_3>0$ such that  
\[
\frac{1}{2\pi} 
\int_{|z| \leq t_0} \frac{e^{iuz}-1}{z}\frac{e^{-itz}-1}{z} \, dz  \leq a \cdot C_3 \cdot K(t,u)
\]
for every $0<a \leq a_2$. Hence, 
\[
\frac{1}{2\pi} \int_{|z| \leq t_0} |\Phi_1(\phi;z)|^2 \, dz
\leq a \cdot C_3 \cdot 
\frac{1}{2\pi} \int_{-\infty}^{\infty} |\Phi_1(\phi;z)|^2 \, dz. 
\]
Applying this to \eqref{Eq_411}, we obtain 
\[
\aligned 
\langle \phi, \phi \rangle_{G_g,a}
& \geq 
((C-2C_1C_2)-a(C+C_0)C_3) \frac{1}{2\pi} \int_{-\infty}^{\infty} 
|\Phi_1(\phi;z)|^2 \, dz\\
& \quad +
((C-2C_1C_2)-a(C+C_0)C_3) \frac{1}{2\pi} \int_{-\infty}^{\infty} 
|\Phi_1(\phi;-z)|^2 \, dz
\endaligned 
\]
for $0<a<a_2$. 
The coefficients of the integrals on the right-hand side 
are positive if $0<a<a_2$ is sufficiently small. 
\end{proof}

To state another analog of Yoshida's result, 
we define 
\begin{equation*} 
\mathfrak{K}_{N,0}(a) :=\left\{ \phi \in L^2(-a,a)\,\left|~\widehat{\phi}(0)=0,~
~I_0^{(a)}(\phi) \in K_N(a) \right.\right\},
\end{equation*}
where $K_N(a)$ is the space in \eqref{Eq_401}. 
The following is also used to prove Theorem \ref{Thm_1_4}. 

\begin{theorem} \label{Thm_4_3} 
Let $a_0>0$ and $\mu>0$ be given numbers. 
Then there exists $N \geq 0$ such that 
\begin{equation} \label{Eq_412} 
\langle \phi, \phi \rangle_{G_g,a} \geq \mu \int_{-\infty}^{\infty} |\Phi_1(\phi,z)|^2 \, dz
\end{equation}
for every $\phi \in \mathfrak{K}_{N,0}(a)$ and $0<a \leq a_0$, 
where $\Phi_1(\phi,z)$ is the transform of $\phi$ in \eqref{Eq_402}.  
\end{theorem}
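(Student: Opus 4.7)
The plan is to adapt the proof of Theorem \ref{Thm_4_2} by retaining $a$ in the full range $(0,a_0]$ and replacing the ``small $a$'' hypothesis by the Paley--Wiener-type vanishing encoded in the space $\mathfrak{K}_{N,0}(a)$. First I would note that the decomposition $\langle\phi,\phi\rangle_{G_g,a} = I_0 + I_1 + I_\infty$ established in the proof of Theorem \ref{Thm_4_2} is valid for any $\phi\in L^2(-a,a)$ and any $0<a\leq a_0$. On the shifted contour $\Im z = c$, Schwarz together with the pointwise comparison $K(t,u;\pm c)\leq C_2(a_0)K(t,u)$ for $|t|,|u|\leq a_0$ gives
$$|I_0| + |I_1| \;\leq\; \frac{C}{\pi}\int_{-\infty}^{\infty}|\Phi_1(\phi,z)|^2\,dz$$
with $C$ depending only on $a_0$ (after collapsing the $\pm z$ contributions by symmetry). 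For $I_\infty$, the Stirling asymptotic \eqref{Eq_410} provides, for any prescribed $M>0$, a threshold $T=T(M)>0$ such that $\Re[(1/2)(\Gamma'/\Gamma)(s/2)-(1/2)\log\pi]\geq M$ on $|z|\geq T$, while this quantity is bounded below by some $-C_0$ on $|z|\leq T$. Splitting the defining integral then yields
$$I_\infty \;\geq\; \frac{M}{\pi}\int_{-\infty}^{\infty}|\Phi_1(\phi,z)|^2\,dz \;-\; \frac{M+C_0}{\pi}\int_{|z|\leq T}|\Phi_1(\phi,z)|^2\,dz.$$

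The essential new ingredient is to control the localized integral on $|z|\leq T$ by means of the $K_N$-condition. Set $\psi:=I_0^{(a)}\phi$. The hypothesis $\widehat{\phi}(0)=0$ forces $\psi(\pm a)=0$, and integration by parts gives $\widehat{\psi}(z)=i\,\Phi_1(\phi,z)$; consequently $\widehat{\psi}$ is an entire function of exponential type $\leq a\leq a_0$ lying in $L^2(\R)$, i.e., a Paley--Wiener function. The membership $\psi\in K_N(a)$ translates precisely into $\widehat{\psi}(\pi n/a)=0$ for all integers $|n|\leq N$. The Shannon interpolation formula then produces
$$\widehat{\psi}(z) \;=\; \sum_{|n|>N}\widehat{\psi}\!\left(\frac{\pi n}{a}\right)\frac{\sin(az-\pi n)}{az-\pi n},$$
and combining Cauchy--Schwarz with the sampling Plancherel identity $\|\widehat{\psi}\|_{L^2(\R)}^2 = (\pi/a)\sum_n|\widehat{\psi}(\pi n/a)|^2$ produces, for $N>2a_0T/\pi$, a bound of the form
$$\int_{|z|\leq T}|\Phi_1(\phi,z)|^2\,dz \;\leq\; \frac{C_3\,a_0\,T}{N}\int_{-\infty}^{\infty}|\Phi_1(\phi,z)|^2\,dz.$$

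Assembling the three estimates, one obtains
$$\langle\phi,\phi\rangle_{G_g,a} \;\geq\; \left(\frac{M-C}{\pi}-\frac{C_3\,a_0\,T\,(M+C_0)}{\pi N}\right)\int_{-\infty}^{\infty}|\Phi_1(\phi,z)|^2\,dz.$$
Given $\mu>0$, one first chooses $M$ so that $(M-C)/\pi\geq 2\mu$, then fixes $T=T(M)$, and finally selects $N$ large enough that the correction term is at most $\mu$; this yields \eqref{Eq_412} for all $0<a\leq a_0$ and $\phi\in\mathfrak{K}_{N,0}(a)$. The main technical hurdle is the Shannon-type estimate on $|z|\leq T$: the zeros of $\widehat{\psi}$ at $\pi n/a$ for $|n|\leq N$ must force smallness of $\widehat{\psi}$ on a \emph{fixed} interval, with a rate uniform in $a\in(0,a_0]$. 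Because the sampling spacing $\pi/a$ tightens as $a\to 0$, a careful analysis of the truncated sinc series is required to ensure that the tail $|n|>N$ yields the desired $O(1/N)$ decay with implicit constants depending only on $a_0$ and $T$.
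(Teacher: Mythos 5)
Your argument is correct and mirrors the paper's structure closely: you reuse the decomposition $\langle\phi,\phi\rangle_{G_g,a} = I_0 + I_1 + I_\infty$ and the bounds on $I_0+I_1$ and $I_\infty$ established in the proof of Theorem~\ref{Thm_4_2} (taking the constant there large enough relative to $\mu$), and then reduce everything to showing that $\int_{|z|\le T}|\Phi_1(\phi,z)|^2\,dz$ is $O(1/N)$ times the full integral. The one place where you diverge from the paper is exactly this last estimate: the paper expands $\psi=I_0^{(a)}\phi$ in its Fourier series on $[-a,a]$, integrates by parts once in the Fourier integral to introduce the decaying factor $1/(\pi i n)$, and then applies Cauchy--Schwarz, whereas you pass to the Paley--Wiener function $\widehat\psi$ and invoke the Shannon interpolation formula together with the sampling Plancherel identity. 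These yield the same $O(1/N)$ estimate, with your version avoiding the extraneous factor $(1+a|z|)$ that the paper's integration by parts produces. So the approaches are the same in substance with a mild technical variant in the sampling step.

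One small conceptual point you should correct: you claim that the sampling spacing $\pi/a$ ``tightens'' as $a\to 0$ and that this makes uniformity in $a\in(0,a_0]$ the main technical hurdle. In fact the spacing $\pi/a$ \emph{widens} as $a\to 0$, and your bound (which, if one tracks constants, is $\frac{C_3\,aT}{N}\int|\Phi_1|^2$, with $a$ rather than $a_0$ in the numerator) actually improves as $a\to0$. The condition $N>2a_0T/\pi$ and the implicit constant depend only on $a_0$, so uniformity in $a$ is automatic and the ``hurdle'' you flag at the end is not a genuine one.
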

\begin{proof}
By taking $a_1$ larger than $a_0$ 
and taking $C$ as $C > 3C_1C_2+\mu$ in the proof of Theorem \ref{Thm_4_2}, 
we obtain the lower bound \eqref{Eq_411} for $0<a \leq a_0$ 
with $C-2C_1C_2 > \mu + C_1C_2$. 
Therefore, the proof is complete if the estimate 
\begin{equation} \label{Eq_413}  
\int_{|z| \leq t_0}
|\Phi_1(\phi;z)|^2
 \, dz 
\ll N^{-1/2}
\int_{-\infty}^{\infty}
|\Phi_1(\phi;z)|^2
 \, dz
\end{equation}
is proven for $\phi \in \mathfrak{K}_{N,0}(a)$ and $0<a \leq a_0$, 
where the implied constant depends only on $a_0$. 
This is shown in the same way as \cite[Lemma 3]{Yo92} as follows. 

Let $\phi \in \mathfrak{K}_{N,0}(a)$ and put $\psi = I_0^{(a)}(\phi)~(\in K_N(a))$. 
Then $\psi(-a)=0$, $\psi(a)=\widehat{\phi}(0)=0$,
$\Phi_1(\phi,z)
= -i \widehat{\psi}(z)$, and 
\[
\aligned 
\int_{-\infty}^{\infty} |\Phi_1(\phi,z)|^2 \,dz 
= \int_{-\infty}^{\infty} |\widehat{\psi}(z)|^2 \,dz 
= \Vert \psi \Vert_{L^2}^2
\endaligned 
\]
by the Parseval identity. 
Let $\psi(t)=(2a)^{-1/2}\sum_{|n|>N}c_n e^{\pi i n t/a}$ 
be the Fourier expansion of $\psi$ in $L^2(-a,a)$. 
Then $\Vert \psi \Vert_{L^2}^2 = \sum_{|n|>N} |c_n|^2$, $c_n \ll n^{-k}$ 
for any $k \in \Z_{\geq 0}$,  and 
\[
\aligned 
\widehat{\psi}(z) 
& = \frac{1}{\sqrt{2a}} 
\left[ 
\sum_{|n|>N}c_n \frac{a}{\pi i n} e^{\pi i n t/a} \, e^{izt}
\right]_{t=-a}^{a} 
 - \frac{1}{\sqrt{2a}} 
\sum_{|n|>N}c_n \frac{a}{\pi i n} e^{\pi i n t/a} \, iz e^{izt} 
\, dt
\endaligned 
\]
by applying integration by parts to the Fourier integral. Hence we obtain
\[
\widehat{\psi}(t) 
\leq \sqrt{2a} (1+a|z|) \sum_{|n|>N} \left|\frac{c_n}{\pi n}\right|
\leq \sqrt{2a} (1+a|z|) \left( \sum_{|n|>N} \frac{1}{(\pi n)^2} \right)^{1/2} \Vert \psi \Vert_{L^2}
\]
for $z \in \R$. For the second inequality, we used the Schwartz inequality.  
This inequality implies \eqref{Eq_413}, since $\Phi_1(\phi,z)=-i\widehat{\psi}(z)$.    
\end{proof}

Theorem \ref{Thm_4_3} can also be proved using \eqref{Eq_308} 
and \cite[Lemma 3]{Yo92}, but here we performed a direct proof as above.

%
\section{Proof of Theorem \ref{Thm_1_4} } \label{Section_5}
%

First, we show the second half of Theorem \ref{Thm_1_4}.
Since $g(t)$ is continuous and hermitian on the real line, 
the integral operator $\mathsf{G}_g[a]$ on $L^2(-a,a)$ 
defined in \eqref{eq_110} is a self-adjoint Hilbert--Schmidt operator for every $0<a<\infty$. 
Therefore, the spectrum of $\mathsf{G}_g[a]$ consists of eigenvalues and zero.  
Whether or not zero is an eigenvalue is not determined by the general theory. 
The hermitian form $\langle \cdot, \cdot \rangle_{G_g,a}$ 
is non-negative definite 
if and only if all eigenvalues of $\mathsf{G}_g[a]$ are non-negative,  
and the nondegeneracy for $\langle \cdot, \cdot \rangle_{G_g,a}$ 
is equivalent to that $\mathsf{G}_g[a]$ does not have zero as an eigenvalue, 
since 
$\langle \phi_1, \phi_2 \rangle_{G_g,a}=\langle \mathsf{G}_g[a] \phi_1, \phi_2\rangle_{L^2}$ 
and there exists an orthonormal basis for $L^2(-a,a)$ 
consisting of eigenfunctions of $\mathsf{G}_g[a]$. 

\subsection{Proof of necessity}  \label{Section_5_1}
If the RH is true, $G_g(t,u)$ is non-negative definite 
on $(-a,a)$ for every $0< a < \infty$ by Theorem \ref{Thm_1_2}. 
Therefore, all eigenvalues $\mathsf{G}_g[a]$ are non-negative (\cite[\S8]{St76}). 
If there exists an eigenfunction for eigenvalue zero, 
it must be zero as a function by \eqref{Eq_301}, \eqref{Eq_302}, and Lemma \ref{Lem_2_1}. 
This is a contradiction. 
Hence, $\langle \cdot, \cdot \rangle_{G_g,a}$ is nondegenerate. 
\hfill $\Box$

\subsection{Proof of sufficiency} \label{Section_3_2}
We prove the sufficiency with the same strategy as in the proof of \cite[Theorem 2]{Yo92}. 
That is, we prove that $\langle \cdot, \cdot \rangle_{G_g,a}$  
degenerates on $L^2(-a,a)$ at $a=a_0$ for some $a_0>0$ if the RH is false. 

Let  $A$ be the set of all positive real numbers $a$ 
such that $\langle \cdot, \cdot \rangle_{G_g,a}$ is non-negative definite on $L^2(-a,a)$
and set $B=(0,\infty)\setminus A$. 
If $b \in B$, there exists $\phi \in L^2(-b,b)$ such that $\langle \phi, \phi \rangle_b<0$. 
Then $\langle \phi, \phi \rangle_{b'}<0$ if $b'$ is sufficiently close to $b$ 
by the continuity of the kernel $G_g(t,u)$. 
Hence $B$ is open and $A$ is a closed subset of $(0,\infty)$. 
Assume that the RH is false. 
Then $A$ is bounded by Theorem \ref{Thm_1_3}. 
Therefore, we prove that $\langle \cdot, \cdot \rangle_{G_g,a}$ 
degenerates on $L^2(-a,a)$  at $a=a_0$ 
for the maximum element $a_0$ of $A$. 
\bigskip

Let $a_1$ be a real number greater than $a_0$ 
and let $\mu>0$. 
For $n \in \Z$, we put
\[
\chi_n[a](t) := \frac{1}{\sqrt{2a}} \exp\left( \frac{\pi i n t}{a} \right).
\]
Then $\{\chi_{n}[a]\}_{n \in \Z}$ forms 
an orthonormal basis of $L^2(-a,a)$. 
We set  
\[
\chi_{n,m}[a](t):=
\frac{\pi i n}{a}
\chi_{n}[a](t) -  (-1)^{n+m}\frac{\pi i m}{a}\chi_{m}[a](t) 
\]
for $m,n\not=0$. 
Then $\widehat{\chi_{n,m}[a]}(0)=0$ and integrals 
$I_0^{(a)}(\chi_{n,n+1}[a])$ and $I_0^{(a)}(\chi_{-n,-(n+1)}[a])$ 
belong to $K_N(a)$ for every $n>N \in \N$, 
because 
\[
I_0^{(a)}(\chi_{n,m}[a])
= \chi_{n}[a] - (-1)^{m+n} \chi_{m}[a].
\]

We denote by $\mathcal{V}_N(a)$ the closed subspace of $L^2(-a,a)$ 
spanned by $\chi_{n,n+1}[a]$ and $\chi_{-n,-(n+1)}[a]$ 
for all $n>N$. Based on Theorem \ref{Thm_4_3}, 
we can take $N$ such that 
\begin{equation} \label{Eq_501}
\langle v,v \rangle_{G_g,a} \geq \mu \Vert I_0^{(a)}(v) \Vert_{L^2(-a,a)}
\end{equation}
for every $v \in \mathcal{V}_N(a)$ and $0<a \leq a_1$,  
because inequality \eqref{Eq_412}  extends to $v$ in the $L^2$-closure 
$\mathcal{V}_N(a)$ of a subspace of $\mathfrak{K}_{N,0}(a)$ 
and $\Phi_1(v,\cdot)=\widehat{I_0^{(a)}(v)}$. 
In particular $\langle v, v \rangle_{G_g,a}>0$ for every non-zero $v \in \mathcal{V}_N(a)$. 
\bigskip

Let $\mathcal{V}_N(a)^\perp$ be the orthogonal complement 
of $\mathcal{V}_N(a)$ in $L^2(-a,a)$ 
with respect to $\langle\cdot,\cdot\rangle_{L^2(-a,a)}$. 
We put
\begin{equation} \label{Eq_502}
\aligned 
u_n[a] & := \chi_n[a] \quad \text{if $|n| \leq N$}, \\
u_{\pm(N+1)}[a] & := \sum_{k=1}^{\infty} \frac{(-1)^k}{N+k} \cdot \chi_{\pm(N+k)} \\
& =  \frac{(-1)^{N+1}}{\sqrt{2a}}\left[
\log\left(1+e^{\pm \frac{\pi i t}{a}}\right)
+ \sum_{k=1}^{N}\frac{(-1)^k}{k} e^{\pm\frac{\pi i k t}{a}}
\right], \\
u_{\pm n}[a] & := \chi_{\pm n, \pm(n+1)}[a] \quad \text{if $n \geq N+1$}. 
\endaligned 
\end{equation}
Then 
$u_{\pm(N+1)}[a]$ are orthogonal to $u_n[a]$ ($|n|\not=N+1$), 
$u_{N+1}[a]$ and $u_{-(N+1)}[a]$ are orthogonal, 
and 
$\widehat{u_{\pm(N+1)}[a]}(0)=0$. 
Further the space $\mathcal{V}_N(a)^\perp$ is a $(2N+3)$ dimensional space 
spanned by $\{u_{n}[a]\}_{|n|\leq N+1}$. 
As we will show later, 
for each $u_n[a]$ ($|n| \leq N+1$), 
there exists $v_n[a]\in \mathcal{V}_N(a)$ such that 
\begin{equation} \label{Eq_503}
\langle v, u_n[a] \rangle_{G_g,a} = \langle v, v_n[a] \rangle_{G_g,a}
\quad \text{for every $v \in \mathcal{V}_N(a)$}. 
\end{equation}
Each $u_n[a]-v_n[a]$ is orthogonal 
to $\mathcal{V}_N(a)$ for $\langle \cdot, \cdot \rangle_{G_g,a}$.  
Let 
\[
\mathcal{A}_N(a)
= \{ \phi \in L^2(-a,a)~|~\langle \phi, v \rangle_{G_g,a}=0 
~\text{for all $v \in \mathcal{V}_N(a)$}\}
\]
be the annihilator of  $\mathcal{V}_N(a)$ for $\langle \cdot, \cdot \rangle_{G_g,a}$. 
Then $\langle \cdot, \cdot \rangle_{G_g,a}$ 
is non-negative definite on $L^2(-a,a)$ 
if and only if it is non-negative definite on $\mathcal{A}_N(a)$, 
since any element of $L^2(-a,a)$ can be written as 
a linear combination of  $u_n[a]-v_n[a] \in \mathcal{A}_N(a)$ 
($|n| \leq N+1$) and $v \in \mathcal{V}_N(a)$. 

By definition of $a_0$, 
$\langle \cdot, \cdot \rangle_{G_g,a}$ is non-negative definite 
on $\mathcal{A}_N(a)$ for $0<a \leq a_0$ 
and is not non-negative definite for $a_0 < a \leq a_1$. 
The hermitian form $\langle \cdot, \cdot \rangle_{G_g,a}$ on $\mathcal{A}_N(a)$ 
is represented by matrix coefficients
\begin{equation} \label{Eq_504}
\langle u_m[a]-v_m[a], u_n[a]-v_n[a]\rangle_{G_g,a}, \quad |m|,|n| \leq N+1.
\end{equation}
By \eqref{Eq_503}, we have 
\[
\langle u_m[a]-v_m[a], u_n[a]-v_n[a]\rangle_{G_g,a}
= \langle u_m[a], u_n[a] \rangle_{G_g,a} 
- \langle v_m[a], v_n[a]\rangle_{G_g,a}. 
\]
The continuity of 
$\langle u_m[a], u_n[a] \rangle_{G_g,a}$ 
is trivial. Therefore, 
the proof of Theorem \ref{Thm_1_4} is reduced to 
the continuity of $\langle v_m[a], v_n[a] \rangle_{G_g,a}$ 
in a neighborhood $U_{a_0}$ of $a=a_0$ 
as in the proof of \cite[Theorem 2]{Yo92}, 
since such fact implies 
that $\langle \cdot, \cdot \rangle_{G_g,a}$ 
degenerates on $L^2(-a,a)$ at $a=a_0$. 
\bigskip

We show the existence of an orthogonal projection 
$v \in \mathcal{V}_N(a)$ of $u \in \mathcal{V}_N(a)^\perp$ 
with respect to $\langle \cdot,\cdot \rangle_{G_g,a}$. 
Put $d={\rm dim}\,\mathcal{V}_N(a)^\perp=2N+3$. 
Take a basis $\{\phi_k\}_{k=1}^{d}$ of $\mathcal{V}_N(a)^\perp$ 
and denote $\psi_{k}$ the orthogonal projection of $\phi_k$ 
in the closed subspace $\overline{\mathsf{G}_g[a](\mathcal{V}_N(a))}$ of $L^2(-a,a)$ 
with respect to $\langle \cdot,\cdot \rangle_{L^2(-a,a)}$. 
Then each $\phi_k-\psi_{k}$ belongs to $\mathcal{A}_N(a)$, 
since $\langle \phi_k-\psi_{k}, v\rangle_{G_g,a}  
= \langle \phi_k-\psi_{k}, \mathsf{G}_g[a]v \rangle_{L^2(-a,a)}=0$. 
We decompose it as $\phi_k-\psi_{k} =u_{k}-v_{k}$ 
by $u_{k} \in \mathcal{V}_N(a)^\perp$ and $v_{k} \in \mathcal{V}_N(a)$ 
according to the direct sum $L^2(-a,a)=\mathcal{V}_N(a)^\perp \oplus \mathcal{V}_N(a)$. 
Then $\{u_{k}\}_{k=1}^{d}$ is also a basis of $\mathcal{V}_N(a)^\perp$, 
because, if $\{u_{k}\}_{k=1}^{d}$ is linearly dependent, 
a non-zero linear combination of $v_{k}$'s must belongs to $\mathcal{A}_N(a) \cap \mathcal{V}_N(a)$ 
which is $\{0\}$ by the positivity of $\langle \cdot, \cdot \rangle_{G_g,a}$ on $\mathcal{V}_N(a)$. 
As a result, we obtain the direct sum 
$L^2(-a,a) = \mathcal{A}_N(a) \oplus \mathcal{V}_N(a)$ 
which is also orthogonal for $\langle \cdot, \cdot \rangle_{G_g,a}$, 
that is, for each $u \in \mathcal{V}_N(a)^\perp$ 
there exists $v \in \mathcal{V}_N(a)$ such that $u - v \in \mathcal{A}_N(a)$. 
\bigskip

We back to the proof of the continuity of $\langle v_m[a], v_n[a] \rangle_{G_g,a}$. 
Hereafter in this proof, we abbreviate 
$\chi_n[a]$ as $\chi_n$, 
$u_n[a]$ as $u_n$,  $v_n[a]$ as $v_n$,  
$\langle \cdot,\cdot \rangle_{G_g,a}$ as 
$\langle \cdot,\cdot \rangle$, 
$I_0^{(a)}(\phi)$ as $I(\phi)$, and 
$\langle \cdot,\cdot \rangle_{L^2(-a,a)}$ as 
$\langle \cdot,\cdot \rangle_{L^2}$. 
Let $\{\tau_k^+\}$ and $\{\tau_k^-\}$ be orthonormal systems obtained from 
$\{I(u_{n})\}_{n \geq N+1}
=\{\chi_{n}+\chi_{n+1}\}_{n \geq N+1}$
and 
$\{I(u_{-n})\}_{n \geq N+1}
=\{\chi_{-n}+\chi_{-(n+1)}\}_{n \geq N+1}$ 
by the Gram--Schmidt orthogonalization process 
with respect to $\langle \cdot,\cdot \rangle_{L^2}$, 
respectively. We have
\begin{equation} \label{Eq_505}
\tau_j^\pm= \frac{(-1)^{j-1}}{\sqrt{j(j+1)}} 
\left( \sum_{k=1}^{j} (-1)^{k-1} \chi_{\pm(N+k)} -j(-1)^{(j+1)-1}\chi_{\pm(N+j+1)} 
\right)
\end{equation}
by the determinant formula 
\begin{equation} \label{Eq_506}
\tau_j^\pm 
= \frac{1}{\sqrt{G_{j-1}G_j}}
\begin{vmatrix}
\langle \omega_1, \omega_1 \rangle_{L^2} & \langle \omega_1, \omega_2 \rangle_{L^2} & \cdots & \langle \omega_1, \omega_{j-1} \rangle_{L^2} &  \omega_1 \\
\langle \omega_2, \omega_1 \rangle_{L^2} & \langle \omega_2, \omega_2 \rangle_{L^2} & \cdots & \langle \omega_2, \omega_{j-1} \rangle_{L^2} & \omega_2 \\
\vdots & \vdots & \ddots & \vdots & \vdots \\
\langle \omega_j, \omega_1 \rangle_{L^2} & \langle \omega_j, \omega_2\rangle_{L^2} & \cdots & \langle \omega_j, \omega_{j-1} \rangle_{L^2}  & \omega_j \
\end{vmatrix}, 
\end{equation}
where $\omega_k=\chi_{\pm(N+k)}+\chi_{\pm(N+k+1)}$, 
$G_0=1$, and 
\begin{equation} \label{Eq_507}
G_j
= 
\begin{vmatrix}
\langle \omega_1, \omega_1 \rangle_{L^2} & \langle \omega_1, \omega_2 \rangle_{L^2} & \cdots & \langle \omega_1, \omega_j \rangle_{L^2} \\
\langle \omega_2, \omega_1 \rangle_{L^2} & \langle \omega_2, \omega_2 \rangle_{L^2} & \cdots & \langle \omega_2, \omega_j \rangle_{L^2} \\
\vdots & \vdots & \ddots & \vdots \\
\langle \omega_j. \omega_1 \rangle_{L^2} & \langle \omega_j, \omega_2 \rangle_{L^2} & \cdots & \langle \omega_j, \omega_j \rangle_{L^2} \\
\end{vmatrix}\,(=j+1).
\end{equation}
We write $\tau_{2n-1}=\tau_n^+$ and $\tau_{2n}=\tau_n^-$ 
and define the basis $\{\eta_j\}$ of $\mathcal{V}_N(a)$ by
\[
\aligned 
\eta_j = D(\tau_j), \quad j=1,2,\cdots.
\endaligned 
\]  
Then, there exists a sequence of positive numbers $\mu(M)$, $M \in \N$ 
independent of $a \in U_{a_0}$ 
such that $\lim_{M \to \infty} \mu(M) = \infty$ and that
\begin{equation} \label{Eq_508}
\langle v,v \rangle \geq \mu(M) \Vert I(v) \Vert_{L^2}
\end{equation}
if $v \in \mathcal{V}_N(a)$ satisfies $\langle I(v), I(\eta_i) \rangle_{L^2}
=\langle I(v), \tau_i \rangle_{L^2}=0$ 
for every $i \leq M$ when $a \in U_{a_0}$ 
by the orthogonality of $\{\tau_k\}$ for $\langle\cdot,\cdot \rangle_{L^2}$
and Theorem \ref{Thm_4_3}. 

We have 
$\langle \eta_j , \eta_l \rangle=\langle D(\tau_j) , D(\tau_l) \rangle
=\langle \tau_j , \tau_l \rangle_W$,
\[ 
\langle u_n , \eta_j \rangle
=
\langle \chi_n , \eta_j \rangle
=\frac{a}{\pi i n}\langle \chi_n-(-1)^n \chi_0 , \tau_j \rangle_W \quad (0<|n| \leq N)
\]
\[ 
\langle u_{\pm(N+1)} , \eta_j \rangle
=
\frac{a}{\pi i}
\sum_{k=1}^{\infty} \frac{(-1)^k}{(N+k)^2} 
\langle (\chi_{\pm(N+k)}-(-1)^{N+k}\chi_0), \tau_j \rangle_W
\]
by $\chi_n = D(a(\pi i n)^{-1} (\chi_n-(-1)^n \chi_0))$,  
\[
u_{\pm(N+1)}
=
D\left[ \frac{a}{\pi i}\sum_{k=1}^{\infty}  \frac{(-1)^k}{(N+k)^2} 
(\chi_{N+k}-(-1)^{N+k}\chi_0)
\right], 
\]
$\chi_k-(-1)^k \chi_0 \in C(a)$ ($k \in \Z$), 
the extension of \eqref{Eq_308} to $\mathcal{V}_N(a)$, and 
the definition $\langle u,v \rangle_W = W(u \ast \widetilde{v})$. 
Therefore, by definition \eqref{Eq_502}, 
estimates of $\langle \eta_j , \eta_l \rangle$ 
and $\langle u_n , \eta_j \rangle$ ($0<|n| \leq N+1$) are reduced 
to the estimates of  $\langle \chi_m, \chi_n \rangle_W$ ($m,n \in \Z$), 
but they are known as 
$|\langle \chi_m,\chi_n \rangle_W| \ll |m-n|^{-1}$ 
if $m\not=n$ and 
$|\langle \chi_n,\chi_n \rangle_W| \sim \log |n|$ 
as $|n| \to \infty$ 
in \cite[(5.17) and (5.18)]{Yo92}, 
where the implied constants can be taken independent of $a \in U_{a_0}$. 
From these estimates and the explicit formula \eqref{Eq_505}, 
we can easily deduce the estimates 
\begin{equation} \label{Eq_509}
|\langle \eta_j , \eta_l \rangle| \ll 
 \frac{1}{|j-l|} + \frac{1}{\max\{j,l\}} \log \max\{j,l\} 
\end{equation}
for $j, l \in \N$ with $j \not= l$ and 
\begin{equation} \label{Eq_510}
|\langle u_n, \eta_j \rangle|=
|\langle \chi_n, \eta_j \rangle| \ll 
\frac{\log j}{j} ~ (0<|n| \leq N), 
\qquad |\langle u_{\pm (N+1)}, \eta_j \rangle| \ll \frac{\log j}{j}
\end{equation}
for $j \in \N$, where the implied constants independent of $a \in U_{a_0}$. 
Further, we obtain
\begin{equation} \label{Eq_511}
|\langle u_0, \eta_j \rangle|=
|\langle \chi_0, \eta_j \rangle| \ll 
\frac{(\log j)^2}{j} 
\end{equation}
for $j \in \N$. This estimate cannot be reduced 
to the estimate of $\langle \chi_m,\chi_n \rangle_W$ 
and must be treated separately. 
But we leave that for later (Section \ref{Section_3_3} below) 
and continue with the proof of Theorem \ref{Thm_1_4}.
\medskip

Let $\{\psi_k\}$ be the basis of $\mathcal{V}_N(a)$ obtained from  $\{\eta_k\}$ 
by the Gram--Schmidt orthogonalization process with respect to $\langle \cdot,\cdot \rangle$. 
The positivity of $\langle \cdot,\cdot \rangle$ on $\mathcal{V}_N(a)$ 
ensures that this orthogonalization process works. 
By \eqref{Eq_503}, we have
\[
v_n = \sum_{i=1}^{\infty} f_{ni} \psi_i, \quad
 f_{ni}=\langle v_n, \psi_i \rangle=\langle u_n, \psi_i \rangle \quad (|n| \leq N+1). 
\]
Then $f_{ni}$ is a continuous function of $a$ 
by \eqref{Eq_502}, \eqref{Eq_506}, \eqref{Eq_507}, 
and the obvious continuity of $\langle \chi_m,\chi_n \rangle$ for $a$. 
We have 
\[
\langle v_m, v_n \rangle = \sum_{i=1}^{\infty} f_{mi} \overline{f_{ni}}, 
\quad 
\left( \sum_{i>M} |f_{mi} \overline{f_{ni}}| \right)^2 \leq 
\sum_{i>M} |f_{mi}|^2  \sum_{i>M} |f_{ni}|^2.
\]
Therefore the continuity of $\langle v_m, v_n \rangle$ 
for $a$ reduces to the uniformity of convergence 
of $\sum_{i=1}^{\infty} |f_{ni}|^2=\sum_{i=1}^{\infty} |\langle u_n, \psi_i \rangle|^2$ 
in $U_{a_0}$ for every $|n| \leq N+1$. 

To prove such convergence, we write
\[
\psi_i = \sum_{j=1}^{i} d_{ij} \eta_j, \qquad 
\eta_i = \sum_{j=1}^{i} c_{ij} \psi_j
\]
by infinite dimensional lower triangular matrices $C = (c_{ij})$ and $D = (d_{ij})$. 
For a positive integer $M$, we set
\[
C = 
\begin{bmatrix} 
X_1 & 0 \\ X_3 & X_4
\end{bmatrix}, \quad 
D = 
\begin{bmatrix} 
Y_1 & 0 \\ Y_3 & Y_4
\end{bmatrix}, 
\]
where $X_1$ and $Y_1$ denote the first $M \times M$-blocks. 
Let $\Vert A \Vert$ be the operator norm of 
the operator $A$ on a Hilbert space $\ell^2$ of row vectors   
defined by $\Vert A \Vert = \sup_{\Vert x \Vert_{\ell^2}=1} \Vert x A \Vert_{\ell^2}$. 
Using the estimates \eqref{Eq_501}, \eqref{Eq_508}, 
and 
\eqref{Eq_509}--\eqref{Eq_511}, 
we obtain 
\begin{equation} \label{Eq_512}
\Vert X_1^{-1} \Vert \ll 1, \quad 
\Vert X_3 \Vert \ll 1, \quad 
\Vert X_4^{-1} \Vert \ll (\mu(M)+O(1))^{-1}
\end{equation}
with implied constants independent of $a \in U_{a_0}$ 
in the same way as the proof of \cite[Lemma 9]{Yo92}, 
because the difference between \eqref{Eq_509} and (II) of \cite[Lemma 9]{Yo92} 
and the difference between \eqref{Eq_510}--\eqref{Eq_511} and (III) of \cite[Lemma 9]{Yo92} 
do not affect the calculations to prove these boundedness.
\medskip

Finally, we estimate $\sum_{i>M}|\langle u_n, \psi_i \rangle|^2$ for $|n| \leq N+1$. 
Since 
\[\sum_{i>M}|\langle u_n,  \psi_i \rangle|
=\sum_{i>M}\left|\sum_{j=1}^{i} d_{ij} \langle u_n,  \eta_j \rangle\right|^2,
\] 
we get 
\[
\sum_{i>M} |\langle u_n,  \psi_i  \rangle|^2
= \left\Vert \xi \begin{bmatrix} {}^{t}Y_3 \\ {}^{t} Y_4 \end{bmatrix} \right\Vert_{\ell^2}^2,  
\]
where $\xi=(\langle u_n, \eta_1 \rangle,~\langle u_n,  \eta_2 \rangle,\cdots,
\langle u_n,  \eta_j \rangle,\cdots)$. By \eqref{Eq_510}--\eqref{Eq_511}, 
\begin{equation} \label{Eq_513}
\Vert \xi \Vert_{\ell^2} \ll 1. 
\end{equation}
From $CD=1$, we get $Y_4=X_4^{-1}$ , $Y_3=-X_4^{-1}X_3X_1^{-1}$. 
Therefore, 
\begin{equation} \label{Eq_514}
\sum_{i>M} |\langle u_n,  \psi_i \rangle|^2
= \left\Vert \xi \begin{bmatrix} {}^{t}X_1^{-1}X_3 \\ 1 \end{bmatrix} {}^{t}X_4^{-1}\right\Vert_{\ell^2}^2. 
\end{equation}
By \eqref{Eq_508}, we obtain 
\[
\left\Vert \xi \begin{bmatrix} {}^{t}X_1^{-1}X_3 \\ 1 \end{bmatrix} {}^{t}X_4^{-1}\right\Vert_{\ell^2}^2 
\ll (\mu(M)+O(1))^{-1} \Vert \xi \Vert_{\ell^2}
\]
with implied constants independent of $a \in U_{a_0}$. 
From \eqref{Eq_513} and \eqref{Eq_514}, 
$\sum_{i=1}^{\infty} |\langle u_n, \psi_i \rangle|^2$ converges uniformly on $U_{a_0}$, 
so the proof is complete. \hfill $\Box$

\subsection{Proof of \eqref{Eq_511}} \label{Section_3_3}

To complete the proof of Theorem \ref{Thm_1_4}, 
we prove \eqref{Eq_511} 
using the same notation as in the second half of Section \ref{Section_3_2}. 
We use the Weil explicit formula in the following form: 
\begin{equation} \label{Eq_515}
\aligned 
\lim_{X \to \infty} & \sum_{|\gamma|\leq X} \widehat{\phi}(\gamma) \\
& = \widehat{\phi}(i/2)+\widehat{\phi}(-i/2) 
 - \sum_{n=1}^{\infty} \frac{\Lambda(n)}{\sqrt{n}} \phi(\log n) 
- \sum_{n=1}^{\infty} \frac{\Lambda(n)}{\sqrt{n}} \phi(-\log n)  \\
& \quad 
- (\log \pi)\phi(0)
+
\frac{1}{2\pi} \int_{-\infty}^{\infty} \Re\left[\frac{\Gamma'}{\Gamma}\left(\frac{1}{4}+\frac{iz}{2}\right) \right]
\widehat{\phi}(z)\, dz, 
\endaligned 
\end{equation}
where the sum on the left-hand side ranges over all zeros $\gamma$ of $\xi(1/2-iz)$ 
counting with multiplicity, 
that is, $\rho=1/2-i\gamma$ for complex zeros $\rho$ of the Riemann zeta-function. 
Formula \eqref{Eq_515} is obtained 
from the explicit formula in \cite[p. 186]{Bo01}  
by taking $\phi(t) = e^{t/2}f(e^t)$ for $f(x)$ in that formula 
and noting the symmetry of zeros $\gamma \mapsto -\gamma$ .  
For the conditions for test functions, we use the conditions in \cite[Section 3]{BoLa99}.

We prove \eqref{Eq_511} by calculating $\langle \chi_0,\eta_j \rangle$ 
directly using \eqref{Eq_515} as in \cite[(5.15)--(5.18)]{Yo92}. 
However, we only give the outlines, 
since the argument of the proof is similar to \cite{Yo92} 
except for the choice of test functions $\phi(t)$.
\medskip

By integrating by parts using \eqref{Eq_109} and \eqref{Eq_110},
\begin{equation} \label{Eq_516}
\aligned 
\langle \chi_0, \chi_k \rangle
& = \sum_\gamma \frac{(\cos(a\gamma)-1)}{\gamma^2} \cdot 
 \frac{(-1)^k2a\sin(a\gamma)}{k\pi+a\gamma} \\
& \quad 
 -  \sum_\gamma \frac{a\gamma\cos(a\gamma)-\sin(a\gamma)}{a \gamma^3} \cdot 
 \frac{(-1)^k2a\sin(a\gamma)}{k\pi+a\gamma} .
\endaligned 
\end{equation}
The functions on the right-hand side 
are expressed as Fourier transforms as follows: 
\[
\aligned 
\int_{-a}^{a} \frac{1}{2} (|t|-a)e^{iz t} \, dt & = \frac{\cos(az)-1}{z^2}, \\
\int_{-a}^{a} \frac{1}{4a} (t^2-a^2)e^{iz t} \, dt 
& = \frac{az\cos(az)-\sin(az)}{az^3}, \\
\int_{-a}^{a}  \exp\left( \frac{\pi i k t}{a} \right) e^{iz t} \, dt 
& = \frac{(-1)^k2a\sin(az)}{\pi k + az}. 
\endaligned 
\]
Therefore, if we set $\phi_{1,k}(t)$ and $\phi_{2,k}(t)$ as 
\[
\aligned 
\phi_{1,k}(t):&=\int_{-a}^{a} \exp\left( \frac{\pi i k (t-u)}{a} \right)\mathbf{1}_{(-a,a)}(t-u)
\frac{1}{2} (|u|-a) \, du, \\
\phi_{2,k}(t):&=\int_{-a}^{a} \exp\left( \frac{\pi i k (t-u)}{a} \right)\mathbf{1}_{(-a,a)}(t-u)
\frac{1}{4a} (u^2-a^2) \, du, 
\endaligned 
\]
we have
\[
\aligned 
\widehat{\phi_{1,k}}(z) & = (-1)^k2a \frac{(\cos(az)-1)\sin(az)}{z^2(k\pi+az)}, \\
\quad 
\widehat{\phi_{2,k}}(z) & = (-1)^k2\frac{(az\cos(az)-\sin(az))\sin(az)}{z^3(k\pi+az)}. 
\endaligned 
\]
These functions are compactly supported and continuously differentiable functions 
as follows:  
\[
\phi_{1,k}(t)
 = 
\begin{cases}
~0, & |t| \geq 2a; \\
~\displaystyle{ \frac{a(-1)^k}{2\pi^2k^2}\left[
a \exp\left( \frac{\pi i kt}{a} \right) - a-\pi i k(t-2a) \right]}, & a \leq t \leq 2a; \\[10pt]
~\displaystyle{ \frac{a}{2\pi^2k^2}\left[
a 
((-1)^k-2)\exp\left( \frac{\pi i kt}{a} \right) 
+ (-1)^k\Bigl(a+\pi i kt\Bigr) \right]} & -a < t < a; \\[10pt]
~\displaystyle{ \frac{a(-1)^k}{2\pi^2k^2}\left[ 
a \exp\left( \frac{\pi i kt}{a} \right) - a-\pi i k(t+2a) \right]}, & -2a \leq t \leq -a,
\end{cases}
\]
\[
\phi_{2,k}(t)
 = 
\begin{cases}
~0, \qquad  |t| \geq 2a; &  \\[14pt]
~\displaystyle{ \frac{i(-1)^{k+1}}{4\pi^3k^3}\left[
2a^2 (1+\pi i k)\exp\left( \frac{\pi i kt}{a} \right) 
-2a^2 - 2\pi i a k(t-a) + \pi^2 k^2 t(t-2a)
 \right]}, &  \\[10pt]
\qquad \qquad 0 \leq t \leq 2a;  & \\[14pt]
~\displaystyle{ \frac{i(-1)^k}{4\pi^3k^3}\left[
2a^2 (1-\pi i k)\exp\left( \frac{\pi i kt}{a} \right) 
-2a^2 - 2\pi i a k(t+a) + \pi^2 k^2 t(t+2a)
 \right]}, & \\[10pt] 
\qquad \qquad -2a \leq t \leq 0.
\end{cases}
\]
Applying the Weil explicit formula \eqref{Eq_515} 
to $\phi_{1,k}(t)$ and $\phi_{2,k}(t)$, 
we obtain 
\begin{equation} \label{Eq_517}
\aligned 
\sum_\gamma & \frac{(\cos(a\gamma)-1)}{\gamma^2} \cdot 
 \frac{(-1)^k2a\sin(a\gamma)}{k\pi+a\gamma} \\
& = (-1)^{k}32 a^2 i (\cos(ia/2)-1)
 \frac{\sin(ia/2)}{4\pi^2 k^2 +a^2} \\
& \quad - \sum_{1 \leq n \leq e^a} \frac{\Lambda(n)}{\sqrt{n}} 
\frac{a^2}{\pi^2k^2}\left[
 ((-1)^k-2)\cos\left( \frac{\pi k \log n}{a} \right) + (-1)^k 
\right] \\
& \quad -  
\sum_{e^a < n \leq e^{2a}} \frac{\Lambda(n)}{\sqrt{n}} 
\frac{a^2(-1)^k}{\pi^2k^2}\left[
 \cos\left( \frac{\pi  k \log n}{a} \right) 
-1 \right] \\
& \quad - \frac{a^2}{\pi^2k^2}((-1)^k-1)\log \pi \\
& \quad + 8a^2 \sum_{n=0}^{\infty}
\frac{ (e^{-a(4n+1)}-2e^{-a(4n+1)/2})(-1)^{k+1}}{(4n+1)(a^2(4n+1)^2+4\pi^2k^2)} \\
& \quad 
+ \frac{a^2(-1)^k (1-2(-1)^k)}{4\pi^2k^2} 
\left[ 
\frac{\Gamma'}{\Gamma}\left(\frac{1}{4}+\frac{\pi i k}{2a}\right) 
+
\frac{\Gamma'}{\Gamma}\left(\frac{1}{4}-\frac{\pi i k}{2a}\right) 
\right]
\\
& \quad 
+ \frac{a^2(-1)^k }{2\pi^2 k^2}
\frac{\Gamma'}{\Gamma}\left(\frac{1}{4}\right) 
\endaligned 
\end{equation}
and 
\begin{equation} \label{Eq_518}
\aligned
\sum_\gamma & \frac{a\gamma\cos(a\gamma)-\sin(a\gamma)}{a \gamma^3} \cdot 
 \frac{(-1)^k2a\sin(a\gamma)}{k\pi+a\gamma} \\
& = 64 a \Bigl( (ai/2)\cos(ai/2)-\sin(ai/2) \Bigr)
\frac{(-1)^{k}\sin(ia/2)}{4\pi^2 k^2 +a^2} \\
& \quad - a^2\sum_{1 \leq n \leq e^{2a}} \frac{\Lambda(n)}{\sqrt{n}} 
\frac{(-1)^k}{\pi^3k^3}\sin\left( \frac{\pi  k \log n}{a} \right)  \\
& \quad +  a\sum_{1 \leq n \leq e^{2a}} \frac{\Lambda(n)}{\sqrt{n}} 
\frac{(-1)^k}{\pi^2k^2} 
\left( (\log n-a )-a  \cos\left( \frac{\pi  k \log n}{a} \right) \right) \\
& \quad - \frac{a^2(-1)^k}{\pi^2k^2} \log \pi \\
& \quad + 8a
\sum_{n=0}^{\infty}\frac{(a(4n+1)+2)e^{-a(4n+1)}(-1)^{k+1}}{(4n+1)^2(a^2(4n+1)^2+4\pi^2 k^2)} \\
& \quad + \frac{a^2i(-1)^k }{4\pi^3k^3} 
\left[
\frac{\Gamma'}{\Gamma}\left(\frac{1}{4}+\frac{\pi ik}{2a}\right) 
(1-\pi ik)
 - 
\frac{\Gamma'}{\Gamma}\left(\frac{1}{4}-\frac{\pi ik}{2a}\right) 
(1+\pi ik)
\right] \\
& \quad + \frac{a(-1)^k}{4\pi^2k^2} \left[  
2a \frac{\Gamma'}{\Gamma}\left(\frac{1}{4}\right) 
+ \psi^{(1)}\left(\frac{1}{4}\right) 
\right]
,
\endaligned 
\end{equation}
respectively, where $\psi^{(1)}(z) := \frac{d^2}{dz^2} \log \Gamma(z)$. 
The calculation for the integral on the right-hand side of \eqref{Eq_515} 
is performed in the same way as in \cite[\S5]{Yo92}.
By substituting \eqref{Eq_517} and \eqref{Eq_518} into \eqref{Eq_516}, 
\begin{equation} \label{Eq_519} 
\langle \chi_0, \chi_k \rangle \ll \frac{\log k}{k^2} , 
\end{equation}
where the implied constant depends only on $a$. 
On the other hand, we have 
\[
\aligned 
\langle \chi_0, D(\tau_j^\pm) \rangle
&= \pm \frac{(-1)^{j-1}\pi i}{a\sqrt{j(j+1)}} 
\left[ \sum_{k=1}^{j} (-1)^{k-1}(N+k)\langle \chi_0, \chi_{\pm(N+k)} \rangle 
\right. \\
& \qquad \qquad \qquad \qquad \qquad \left.
-j(-1)^{(j+1)-1}(N+j+1)\langle \chi_0, \chi_{\pm(N+j+1)} \rangle 
\right]
\endaligned 
\]
by \eqref{Eq_505}. 
Therefore, 
\[
\langle \chi_0, D(\tau_j^\pm) \rangle \ll \frac{(\log j)^2}{j} 
\]
by  \eqref{Eq_519}, 
where the implied constant depends only on $a$. 
This implies \eqref{Eq_511} by definition of $\eta_j$. 

\subsection{Comparison with Yoshida's results} 

For every $0<a<\infty$, there exists $N_0=N_0(a)>0$ such that 
$\langle\cdot,\cdot\rangle_W$ is positive definite 
on $K_N(a)$ if $N \geq N_0$ (\cite[Lemma 3]{Yo92}). 
Therefore, we can take the completion $\widehat{K_N(a)}$ 
of $K_N(a)$ with respect to $\langle\cdot,\cdot\rangle_W$. 
Define $W(a)$ by $K(a) = W(a) \oplus K_N(a)$ and set 
$\widehat{K(a)} := W(a) \oplus \widehat{K_N(a)}$. 
Then the hermitian form can be extended to $\widehat{K(a)}$. 
Yoshida proved that the RH is true if and only if 
$\langle\cdot,\cdot\rangle_W$ is non-degenerate 
on $\widehat{K(a)}$ for every $a>0$. 
Further, he proved that $\langle\cdot,\cdot\rangle_W$ 
is non-degenerate on $C(a)$ and $K(a)$ for every $a>0$ (\cite[Propositions 2 and 7]{Yo92}). 

Theorem \ref{Thm_1_4} is an analog of Yoshida's result, 
but it has the advantage that the space is simpler than $\widehat{K(a)}$ 
and the hermitian forms are described by an integral operator with continuous kernel. 
Further, Proposition \ref{Prop_3_1} and 
Yoshida's result \cite[Proposition 2]{Yo92} imply that 
$\langle \cdot, \cdot \rangle_{G_g,a}$ is non-degenerate on $\mathfrak{C}_0(a)$. 

%
\section{Properties of the kernel $G_g(t,u)$} \label{Section_6}
%

In this part we prove Theorem \ref{Thm_1_5} 
and study a little about eigenvalues of $\mathsf{G}_g[a]$. 

\subsection{Proof of Theorem \ref{Thm_1_5}} 

Using $g_\ast(t)$ in \eqref{Eq_408}, we define 
$G_\ast(t,u)=g_\ast(t-u)-g_\ast(t)-g_\ast(-u)+g_\ast(0)$ 
for $\ast \in \{0,1,\infty\}$ so that 
$G_g(t,u)=G_0(t,u)+G_1(t,u)+G_\infty(t,u)$. 
It is known that a continuous integral kernel $K(t,u)$ on $[-a,a]\times[-a,a]$ 
is of trace class if it satisfies the Lipschitz condition 
$|K(t,u_1)-K(t,u_2)| \leq C |u_1-u_2|^\alpha$ 
for some $C>0$ and $1/2 < \alpha \le 1$ (\cite[Chapter IV, Theorem 8.2]{GGK01}). 
Therefore, $G_0(t,u)$ and $G_1(t,u)$ are trace class kernels,  
but the Lipschitz condition can not be applied to $G_\infty(t,u)$, 
because 
\[
C - e^{-t/2}\Phi(e^{-2t},2,1/4) = 2 t\log(1/t) + A \,t 
- \sum_{k=2}^{\infty} \zeta(2-k,1/4) \frac{(-2t)^k}{k!}
\]
for $t>0$ by using the series expansion of $\Phi(z,s,a)$ for $z$ 
in \cite[p.30, (9)]{EMOT81}, where $A = -2(\log 2 + \psi(1/4)-\psi(2)) = \pi+4\log 2+2$ 
and $\zeta (2-k,1/4)=-B_{k-1}(1/4)/(k-1)$ with Bernoulli polynomials $B_n(x)$. 
However, if we decompose the kernel as 
$G_{\infty}(t,u)=G_{\infty,1}(t,u)+G_{\infty,2}(t,u)$ 
according to $g_\infty(t)=2|t|\log(1/|t|)+(g_\infty(t)-2|t|\log(1/|t|))=g_{\infty,1}(t)+g_{\infty,2}(t)$, 
then $G_{\infty,2}(t,u)$ is a trace class kernel.  
Therefore, it remains to show that $G_{\infty,1}(t,u)$ is of trace class. 

If a Hilbert--Schmidt kernel  $K(t,u)$ on $[-a,a]\times[-a,a]$ 
has the derivative in the mean  $(\partial/\partial u)K(t,u)$ 
which is also a Hilbert--Schmidt kernel, 
then $K(t,u)$ defines a trace class operator 
(\cite[p. 120, 3]{GoKr69}). 
Since $g_{\infty,1}(t)=2|t|\log(1/|t|)$ is a piecewise continuously differentiable function 
and its derivative is absolutely integrable on $[-a,a]$, 
the kernel $G_{\infty,1}(t,u)$ is of trace class.   \hfill $\Box$
\bigskip

If $\widehat{\phi}(0)=0$ for $\phi \in L^2(-a,a)$, we have 
\[
\langle \phi, \phi \rangle_{G_g,a} 
= \int_{-a}^{a}\left( \int_{-a}^{a} g(t-u) \phi(u) \, du \right) \overline{\phi(t)}\, dt. 
\]
From this, if $G_g(t,u)$ is non-negative definite, 
one may expect $g(t-u)$ to be so. 
However it is not the case, because $g(t-u)$ is a trace class kernel without 
any assumptions by the proof of Theorem \ref{Thm_1_5}, 
and $\int g(t-t) dt=\int g(0) dt =0$. 
Therefore, if $g(t-u)$ is assumed to be non-negative definite, 
we have $g(t)=0$. 
This is a contradiction. 

\subsection{Eigenvalues} 

For $0<a<\infty$, we study the eigenvalues of the integral operator $\mathsf{G}_g[a]$ 
defined in \eqref{eq_110}. 
Let $\phi(t)$ be the eigenfunction of $\mathsf{G}_g[a]$ 
with the eigenvalue $\lambda$. Then 
\[
\aligned 
\lambda \phi(t) 
& = \mathbf{1}_{[-a,a]}(t) \sum_{\gamma} 
\left( \frac{\widehat{\phi}(-\gamma)-\widehat{\phi}(0)}{\gamma} \right)
\frac{e^{i\gamma t}-1}{\gamma} 
\endaligned 
\]
by formula \eqref{Eq_109} of the kernel $G_g(t,u)$. 
If $\lambda\not=0$, we can write 
\[
\phi(t) = \mathbf{1}_{[-a,a]}(t)
\sum_{\gamma} w_\gamma \frac{e^{i\gamma t}-1}{\gamma} \quad \text{with} \quad 
w_\gamma = \frac{1}{\lambda}\frac{\widehat{\phi}(-\gamma)-\widehat{\phi}(0)}{\gamma} 
\in \C.
\] 
In this case, we have $\phi(0)=0$ and 
\[
\aligned 
\frac{\hat{\phi}(-\gamma)-\hat{\phi}(0)}{\gamma}
& =\sum_{\mu} w_\mu 
\int_{-a}^{a}
\frac{e^{i\mu t}-1}{\mu} \frac{e^{-i\gamma t}-1}{\gamma} \, dt\\
& = \frac{2a}{\gamma}
\sum_{\mu} \frac{w_\mu}{\mu}
\left(
\frac{\sin(a(\gamma-\mu))}{a(\gamma-\mu)} 
- \frac{\sin(a \gamma)}{a\gamma} 
- \frac{\sin(a \mu)}{a\mu} +1
\right).
\endaligned 
\]
Therefore, 
\[
\lambda w_\gamma
=
\frac{2a}{\gamma}
\sum_{\mu} \frac{w_\mu}{\mu}
\left(
\frac{\sin(a(\gamma-\mu))}{a(\gamma-\mu)} 
- \frac{\sin(a \gamma)}{a\gamma} 
- \frac{\sin(a \mu)}{a\mu} + 1
\right).
\]
Hence, if we put 
\[
H(x,y;a)
= \frac{2a}{xy}
\left(
\frac{\sin(a(x-y))}{a(x-y)} 
- \frac{\sin(ax)}{ax} 
- \frac{\sin(ay)}{ay} 
+1 \right), 
\]
then the non-zero eigenvalues of $\mathsf{G}_g[a]$ correspond to 
the eigenvalues of the linear system
\[
\lambda w_\gamma
=
\sum_{\mu}  H(\gamma,\mu;a) \, w_{\mu}
\]
for $w_\gamma$'s, 
where $\gamma$, $\mu$ runs over the zeros of $\xi(1/2-iz)$ 
counting with multiplicity. 
The system is an analog of the linear system introduced 
in Bombieri \cite[Section 7]{Bo01} to study Weil's hermitian form $\langle \cdot, \cdot \rangle_W$.  

On the other hand, if $\lambda=0$, we have 
\[
\sum_{\gamma} w_\gamma \frac{e^{i\gamma t}-1}{\gamma^2} =0 
\] 
for $t \in (-a,a)$ by writing $w_\gamma=\widehat{\phi}(-\gamma)-\widehat{\phi}(0)$. 
Further, $w_\gamma\not=0$ for some $\gamma$ 
by Lemma \ref{Lem_2_1}, 
since $\widehat{\phi}(z)$ is a non-constant entire function of the exponential type. 
By Theorem \ref{Thm_1_4}, $\lambda=0$ is actually an eigenvalue 
when the RH is false. 
Therefore, we obtain the following result similar to 
(iii) in the introduction or the corollary of Theorem 11 in \cite{Bo01}. 

\begin{theorem} 
Suppose that the RH is false. Then there exists $a_0>0$ 
and a non-identically vanishing sequence of complex numbers $\{w_\gamma\}_{\gamma}$ such that 
\[
\sum_{\gamma} w_\gamma \frac{e^{i\gamma t}-1}{\gamma^2} = 0
\]
on the interval $(-a_0,a_0)$ as a function of $t$. 
\end{theorem}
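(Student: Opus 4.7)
The plan is to derive the statement as an immediate corollary of Theorem \ref{Thm_1_4} together with the eigenvalue analysis carried out in the paragraphs preceding the theorem. No new analytic input is needed; one simply assembles existing ingredients.

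First I would apply the contrapositive of Theorem \ref{Thm_1_4}: since the RH is assumed false, there must exist some $a_0 \in (0,\infty)$ for which the integral operator $\mathsf{G}_g[a_0]$ on $L^2(-a_0,a_0)$ has zero as an eigenvalue. Fix a corresponding non-zero eigenfunction $\phi \in L^2(-a_0,a_0)$, so that $\mathsf{G}_g[a_0]\phi = 0$ as elements of $L^2(-a_0,a_0)$.

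Next I would substitute $\phi$ into this equation and use the series representation \eqref{Eq_109} of the kernel exactly as in the computation immediately above the theorem. Since $G_g(t,u)$ is continuous, $(\mathsf{G}_g[a_0]\phi)(t)$ is continuous in $t$, so the vanishing holds pointwise on $(-a_0,a_0)$. Interchanging the integration against $\phi$ with the sum over $\gamma$, which is legitimate by the absolute convergence of \eqref{Eq_109} and the bound $|\widehat\phi(z)| \leq \sqrt{2a_0}\,\|\phi\|_{L^2}$ on the real line, yields
\[
\sum_{\gamma} \frac{\widehat{\phi}(-\gamma)-\widehat{\phi}(0)}{\gamma} \cdot \frac{e^{i\gamma t}-1}{\gamma} \;=\; 0 \qquad \text{for every } t \in (-a_0,a_0).
\]
Setting $w_\gamma := \widehat{\phi}(-\gamma)-\widehat{\phi}(0)$ gives the required identity $\sum_\gamma w_\gamma (e^{i\gamma t}-1)/\gamma^2 = 0$ on $(-a_0,a_0)$, with absolute convergence guaranteed by $|w_\gamma| \leq 2\sqrt{2a_0}\,\|\phi\|_{L^2}$ and $\sum_\gamma |\gamma|^{-2} < \infty$.

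Finally I would rule out that $\{w_\gamma\}$ is identically zero. If every $w_\gamma$ vanished, then the entire function $F(z) := \widehat{\phi}(-z) - \widehat{\phi}(0)$ would vanish at every zero $\gamma$ of $\xi(1/2-iz)$. By Paley--Wiener, $\widehat{\phi}$ is entire of exponential type since $\phi$ has compact support, hence so is $F$. Lemma \ref{Lem_2_1} then forces $F \equiv 0$, so $\widehat{\phi}$ is the constant $\widehat{\phi}(0)$. But a non-zero constant cannot be the Fourier transform of an $L^2$ function supported in $[-a_0,a_0]$, so $\widehat{\phi} \equiv 0$, i.e.\ $\phi = 0$, contradicting the choice of $\phi$. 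Hence $\{w_\gamma\}$ is non-identically vanishing, completing the proof.

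The main obstacle is not in this argument itself but was discharged already in the proof of Theorem \ref{Thm_1_4}, which supplies the critical parameter $a_0$ at which degeneracy of $\langle\cdot,\cdot\rangle_{G_g,a}$ on $L^2(-a,a)$ first occurs. The only minor bookkeeping here is the interchange of operator action with the spectral sum in the kernel, but this is routine given the absolute convergence of \eqref{Eq_109} used throughout Section \ref{Section_6}.
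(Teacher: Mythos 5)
Your proposal is correct and follows exactly the route the paper takes: invoke Theorem \ref{Thm_1_4} to produce $a_0$ at which $\mathsf{G}_g[a_0]$ has a zero eigenvalue, feed the eigenfunction into the series representation \eqref{Eq_109}, set $w_\gamma=\widehat{\phi}(-\gamma)-\widehat{\phi}(0)$, and use Lemma \ref{Lem_2_1} to see that not all $w_\gamma$ vanish. The only difference is that you spell out the small step (a constant Fourier transform of a nonzero compactly supported $L^2$ function is impossible) which the paper leaves implicit when asserting $\widehat\phi$ is a non-constant entire function of exponential type.
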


%
\section{Proofs of Theorems \ref{Thm_1_6}, \ref{Thm_1_7}, and \ref{Thm_1_8} } \label{Section_7}
%

\subsection{Proof of Theorems \ref{Thm_1_6}} 

Suppose that the RH is true. 
Then all $\gamma$ in \eqref{Eq_103} are real. 
Therefore, noting that $\xi(1/2-iz)$ is an entire function of order one, 
$|\Psi(t)| \leq 2\sum_{\gamma} |\gamma|^{-2} < \infty$. 

Conversely, suppose that $\Psi(t)$ is bounded on $[0,\infty)$. 
Then, the integral on the left-hand side of \eqref{Eq_102} 
converges absolutely and uniformly on any compact subset in $\C_+$. 
Hence, $(\xi'/\xi)(1/2-iz)$ has no poles in  $\C_+$, 
which implies that the RH is true. \hfill $\Box$
\medskip

From Theorem \ref{Thm_1_6} we are interested in the supreme of values of $\Psi(t)$. 
It is clearly less than or equal to $2\sum_\gamma \gamma^{-2}\,(<0.094)$ 
by \eqref{Eq_103} under the RH. 
However, it is not easy to determine the the limit superior 
and limit inferior of $\Psi(t)$ even if assuming the RH. 
Assuming that all zeros $\gamma$ of $\xi(1/2-iz)$ are real (RH) and simple 
and that the set of all positive zeros is linearly independent over the rationals, 
we can prove 
\[
\limsup_{t \to \infty} \Psi(t)=2\sum_\gamma \frac{1}{\gamma^2}
\quad \text{and} \quad 
\liminf_{t \to \infty} \Psi(t)=0
\]
by Kronecker's theorem in Diophantine approximations. 
The conjectural value of the limit inferior of $\Psi(t)$ suggests 
the difficulty of proving the non-negativity of $\Psi(t)$ by approximation.

\subsection{Proof of Theorems \ref{Thm_1_7}} 

Suppose that the RH is true. 
Then all $\gamma$ in \eqref{Eq_103} are real. 
Therefore, each term of the middle sum is non-negative. 
Also, for each $t>0$, not all terms are simultaneously zero 
by Lemma \ref{Lem_2_1}. 
Hence $\Psi(t)>0$ for $t>0$. 

Conversely, suppose that $\Psi(t) \geq 0$ for $t>0$. 
Because $\zeta(s)$ has no zeros in the half-line $[1/2,\infty)$ 
by the series expansion 
$(1-2^{1-s})\zeta(s) = \sum_{n=1}^{\infty} (-1)^{n-1}n^{-s}$ for $\Re(s)>0$, 
the logarithmic derivative $(\xi'/\xi)(1/2-iz)$ 
has no poles in the half-line $i\cdot[0,\infty)$ of the imaginary axis. 
Then, by the integral formula \eqref{Eq_102} 
and the well-known result for the Laplace transform for non-negative functions 
(\cite[Theorem 5b in Chap. II]{Wi41}),
 $(\xi'/\xi)(1/2-iz)$ has no poles in  $\C_+$, 
which implies that the RH is true. \hfill $\Box$

\subsection{Proof of Theorem \ref{Thm_1_8}} 

Suppose that the RH is true. Then $\Psi(t)>0$ for $t>0$ by Theorem \ref{Thm_1_7}. 
Therefore, $\{\mu_n\}_{n\geq 0}$ is a Stieltjes moment sequence 
for the measure $4^{-1}e^{-t/2}\Psi(t)\,dt$ on $[0,\infty)$. 
Hence $\det \Delta_n \geq 0$ and $\det \Delta_n^{(1)} \geq 0$ for all $n \in \Z_{>0}$ 
(\cite[Chapter V, \S1]{KrNu77}). 
Conversely, suppose that 
$\det \Delta_n \geq 0$ and $\det \Delta_n^{(1)} \geq 0$ for all $n \in \Z_{>0}$. 
Then $\{\mu_n\}_{n\geq 0}$ is a Stieltjes moment sequence of some measure 
on $[0,\infty)$ (\cite[Chapter V, \S1]{KrNu77}). 
On the other hand, 
the estimate $4^{-1}e^{-t/2}\Psi(t) \ll e^{-c\sqrt{t}}$ obtained from 
Proposition \ref{Prop_2_1} implies that the Stieltjes moment problem 
for $\{\mu_n\}_{n\geq 0}$ is uniquely determined by \cite[Theorem 2]{Lin17}. 
Hence $\{\mu_n\}_{n\geq 0}$ must be equal to the moment sequence 
of the measure $4^{-1}e^{-t/2}\Psi(t)\,dt$, 
and thus $\Psi(t) \geq 0$ for $t>0$. \hfill $\Box$ 

%
\section{Relations with Li coefficients} \label{Section_8}
%

Comparing formulas \eqref{Eq_114} and \eqref{Eq_115}, 
the similarity between the moments $\{\mu_n\}$ 
and the Li coefficients $\{\lambda_{n+1}\}$ is obvious. 
In fact, they have the following explicit relation. 

\begin{theorem} 
Li coefficients \eqref{Eq_114} are represented by the moments \eqref{Eq_113}  as 
\begin{equation} \label{Eq_801}
\lambda_1=\mu_0, \quad 
\lambda_2 = 6\mu_0-\mu_1, \quad 
\lambda_3 = 19\mu_0-7\mu_1 + \frac{1}{2}\mu_2
\end{equation}
and 
\begin{equation} \label{Eq_802}
\frac{1}{n!} \,\lambda_{n} = \sum_{k=0}^{n-1} 
\frac{1}{k!(k+3)!(n-k-1)!}
\left(\Bigl(k - \frac{4n-1}{2} \Bigr)^2 + 2n + \frac{7}{4}\right)
(-1)^k \mu_k
\end{equation}
for $n \geq 4$. 
Conversely, the moments are represented by the Li coefficients as 
\begin{equation} \label{Eq_803}
\frac{1}{n!} \, \mu_n 
= 
\sum_{j=1}^{n+1} 
\left[
\sum_{k=1}^{n-j+2} k\, 2^{k-1} 
\binom{n-k+2}{j} 
\right] (-1)^{j+1}\lambda_j 
\end{equation}
for all non-negative integers $n$. 
The relations \eqref{Eq_801}--\eqref{Eq_802} and \eqref{Eq_803} are inverse to each other.
\end{theorem}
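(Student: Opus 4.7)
The plan is to derive both \eqref{Eq_802} and \eqref{Eq_803} from a single generating-function identity and then extract Taylor coefficients. Introduce the formal power series
\[
G_\lambda(w) := \sum_{n=0}^{\infty} \lambda_{n+1}\, w^n, \qquad G_\mu(X) := \sum_{n=0}^{\infty} \frac{\mu_n}{n!}\, X^n,
\]
which, by \eqref{Eq_114} and \eqref{Eq_115}, equal $(1-w)^{-2}(\xi'/\xi)(1/(1-w))$ and $(1-2X)^{-2}(\xi'/\xi)(1-X)$ respectively. The arguments of $\xi'/\xi$ coincide under the involution $w = -X/(1-X) \leftrightarrow X = -w/(1-w)$, which also gives $1 - w = (1-X)^{-1}$ and $1 - 2X = (1+w)/(1-w)$. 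Eliminating $\xi'/\xi$ between the two expressions yields the key identities
\[
G_\mu(X) \;=\; \frac{1}{(1-X)^2 (1-2X)^2}\, G_\lambda\!\Bigl(\tfrac{-X}{1-X}\Bigr), \qquad G_\lambda(w) \;=\; \frac{(1+w)^2}{(1-w)^4}\, G_\mu\!\Bigl(\tfrac{-w}{1-w}\Bigr).
\]

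To prove \eqref{Eq_803}, I expand the first identity. Write $G_\lambda(-X/(1-X)) = \sum_{j\ge 0}(-1)^j \lambda_{j+1} X^j (1-X)^{-j-2}$, apply the binomial series $(1-X)^{-j-2} = \sum_{i\ge 0}\binom{i+j+1}{j+1}X^i$, and multiply by $(1-2X)^{-2} = \sum_{l\ge 0}(l+1)2^l X^l$. Collecting the coefficient of $X^n$, swapping the order of summation, shifting $j \mapsto j-1$ so that $\lambda_{j+1}$ becomes $\lambda_j$ and $(-1)^j$ becomes $(-1)^{j+1}$, and then setting $k = l+1$ (so $(l+1)2^l = k\cdot 2^{k-1}$ and $\binom{n-l+1}{j} = \binom{n-k+2}{j}$), one arrives at \eqref{Eq_803}.

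To prove \eqref{Eq_801}--\eqref{Eq_802}, I expand the second identity analogously. Using $G_\mu(-w/(1-w)) = \sum_{k\ge 0}\frac{(-1)^k\mu_k}{k!}w^k(1-w)^{-k}$ and combining powers of $(1-w)$ gives
\[
G_\lambda(w) = \sum_{k\ge 0}\frac{(-1)^k \mu_k}{k!} w^k (1+w)^2 (1-w)^{-k-4}.
\]
Expanding $(1-w)^{-k-4} = \sum_{m\ge 0}\binom{k+3+m}{k+3}w^m$ and $(1+w)^2 = 1 + 2w + w^2$, then reading off the coefficient of $w^{n-1}$, yields
\[
\lambda_n \;=\; \sum_{k=0}^{n-1}\frac{(-1)^k \mu_k}{k!}\left[\binom{n+2}{k+3} + 2\binom{n+1}{k+3} + \binom{n}{k+3}\right],
\]
where binomials with top $<$ bottom are understood as zero. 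It then remains to verify the algebraic identity
\[
\binom{n+2}{k+3} + 2\binom{n+1}{k+3} + \binom{n}{k+3} \;=\; \frac{n!}{(k+3)!\,(n-k-1)!}\left[\Bigl(k - \tfrac{4n-1}{2}\Bigr)^2 + 2n + \tfrac{7}{4}\right].
\]
Clearing the common factor $n!/[(k+3)!(n-k-1)!]$ on the left (so the three terms become $(n+1)(n+2)$, $2(n+1)(n-k-1)$, and $(n-k-1)(n-k-2)$) reduces this to a polynomial identity in $k$ of degree two, checked by direct expansion. The three small-$n$ cases of \eqref{Eq_801} follow from the same formula upon noting that the second and third binomial coefficients vanish when $k > n-2$ and $k > n-3$ respectively.

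Finally, since both \eqref{Eq_802} and \eqref{Eq_803} are derived from the same bijective substitution $1-X = 1/(1-w)$ acting on formal power series, composing them returns the identity, so the two formulas are mutual inverses with no further work required. The only genuinely nontrivial step is the quadratic polynomial identity for the sum of three binomials above; everything else is routine manipulation of binomial series.
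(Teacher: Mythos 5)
Your proposal is correct, and it reaches the same formulas by a route that differs from the paper in both halves of the argument. For \eqref{Eq_801}--\eqref{Eq_802}, the paper returns to the Laplace-transform representation \eqref{Eq_102}, expands $\exp\bigl(-\tfrac{t}{2}\tfrac{1+w}{1-w}\bigr)$ and $\tfrac{(1+w)^2}{(1-w)^4}$, multiplies, and extracts polynomials $P_n(t)$ so that $\lambda_{n+1}=\int 4^{-1}e^{-t/2}\Psi(t)P_n(t)\,dt$ (requiring a justification for interchanging sum and integral via Proposition \ref{Prop_2_1}); you instead work purely formally with the generating functions $G_\lambda$ and $G_\mu$ and reduce everything to the three-binomial identity
\[
\binom{n+2}{k+3}+2\binom{n+1}{k+3}+\binom{n}{k+3}
= \frac{n!}{(k+3)!\,(n-k-1)!}\left[\Bigl(k-\tfrac{4n-1}{2}\Bigr)^2+2n+\tfrac{7}{4}\right],
\]
which I have verified by direct expansion (both sides equal $\frac{n!}{(k+3)!(n-k-1)!}(k^2-4nk+k+4n^2+2)$ after factoring out the common factorial). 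Your approach is more elementary, avoids any analytic interchange, and unifies the $n\le 3$ cases of \eqref{Eq_801} with \eqref{Eq_802} as a single formula with vanishing binomials. For \eqref{Eq_803}, the paper substitutes $\lambda_n=\sum_\rho(1-(1-\rho^{-1})^n)$ from Li's formula, passes through the power-sum relation $\sum_\rho\rho^{-m}=\sum_k(-1)^{k+1}\binom{m}{k}\lambda_k$, and compares coefficients; you again bypass this by a direct triple binomial expansion of $\tfrac{1}{(1-X)^2(1-2X)^2}G_\lambda\bigl(\tfrac{-X}{1-X}\bigr)$, with the reindexing $j\mapsto j-1$, $k=l+1$ producing the coefficient $\sum_k k\,2^{k-1}\binom{n-k+2}{j}$ exactly as stated. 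Both proofs close in essentially the same way on the mutual-inverse claim: the paper notes that \eqref{Eq_801}--\eqref{Eq_802} and \eqref{Eq_803} come from two expansions of the same function \eqref{Eq_804}, and you observe that the change of variables $1-X=1/(1-w)$ is an involution on formal power series with no constant term. In short, your argument is sound, slightly more self-contained (no appeal to Li's zero-sum formula), and trades the paper's analytic manipulations for routine binomial-series bookkeeping.
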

\begin{proof}
We first prove \eqref{Eq_801} and \eqref{Eq_802}. 
Formula \eqref{Eq_114} means that Li coefficients are 
defined by the power series expansion 
\begin{equation} \label{Eq_804}
\sum_{n=0}^{\infty} \lambda_{n+1} w^n 
= 
\frac{1}{(1-w)^2} \, \frac{\xi'}{\xi}\left(\frac{1}{1-w}\right).
\end{equation}
By the changing of variables $z=(i/2)(1+w)/(1-w)$, we have 
\begin{equation} \label{Eq_805}
\aligned 
\frac{1}{(1-w)^2} \frac{\xi'}{\xi}\left(\frac{1}{1-w}\right)
& = -\frac{z^2}{(1-w)^2} \left[ -\frac{1}{z^2}  \, \frac{\xi'}{\xi}\left(\frac{1}{2}-iz\right) \right] \\
& = \frac{1}{4}\frac{(1+w)^2}{(1-w)^4}
\int_{0}^{\infty} \Psi(t) \exp\left( -\frac{t}{2}\cdot \frac{1+w}{1-w} \right) \,dt \\
& = \sum_{n=0}^{\infty} 
\int_{0}^{\infty} 4^{-1}e^{-t/2} \Psi(t)  P_n(t)\, dt \cdot w^n, 
\endaligned 
\end{equation}
where $P_n(t)$ are some polynomials. 
The exchanging of the order of the integral and sum is justified 
by the estimate in Proposition \ref{Prop_2_1}. 
Comparing \eqref{Eq_804} and \eqref{Eq_805}, 
we have $\lambda_{n+1}=\sum_k c_{nk} \mu_k$ 
if $P_n(t) = \sum_k c_{nk} t^k$. 
Therefore, we then calculate $P_n(t)$ explicitly to conclude 
\eqref{Eq_801} and \eqref{Eq_802}.
Using the power series of the exponential, 
\[
\aligned 
\exp & \left( -\frac{t}{2}\cdot \frac{1+w}{1-w} \right) 
= e^{-t/2} \exp\left(-\frac{wt}{1-w} \right) \\
&= e^{-t/2}
 + e^{-t/2} (-t)  \sum_{l=0}^{\infty} w^{l+1}
 + e^{-t/2} \sum_{k=2}^{\infty} \frac{(-t)^k}{k!} w^k 
\frac{1}{(k-1)!} \sum_{l=0}^{\infty} \prod_{i=1}^{k-1}(l+i) w^l. 
\endaligned 
\]
On the other hand, 
\[
\frac{(1+w)^2}{(1-w)^4}
= \frac{1}{3}\sum_{m=0}^{\infty} (m+1)(2m^2+4m+3)w^m. 
\]
Multiplying these two expansions, 
\[
\aligned 
\sum_{n=0}^{\infty} & P_n(t) \,w^n
= 
e^{t/2} \cdot \frac{(1+w)^2}{(1-w)^4} 
\exp\left( -\frac{t}{2}\cdot \frac{1+w}{1-w} \right) 
\\
& = 1 + (6-t)w + \frac{1}{3}\sum_{m=2}^{\infty} (m+1)(2m^2+4m+3)w^m \\
& \quad + (-7t)\sum_{l=0}^{\infty} w^{l+2} 
+ (-t)\frac{1}{3}\sum_{m=2}^{\infty} (m+1)(2m^2+4m+3)\sum_{l=0}^{\infty} w^{m+l+1} 
\\
& \quad 
+ \sum_{k=2}^{\infty} \sum_{l=0}^{\infty}\sum_{m=0}^{\infty}
(-t)^k\frac{\prod_{i=1}^{k-1}(l+i)}{3k!(k-1)!} 
(m+1)(2m^2+4m+3) \cdot w^{k+l+m} .
\endaligned 
\]
Rearranging the right-hand side concerning monomials $w^k$, we obtain
\[
\aligned 
\sum_{n=0}^{\infty} & P_n(t) \,w^n
= 1 + (6-t)w + \left(19-7t + \frac{t^2}{2}\right) w^2  \\ 
& \quad + \sum_{n=3}^{\infty}\left[ 
\frac{(n+1)(2(n+1)^2+1)}{3} -t
\left( 7 + 
\sum_{m=2}^{n-1}\frac{(m+1)(2(m+1)^2+1)}{3}
\right) 
\right. \\
& \quad \left.
\qquad \qquad + 
\sum_{k=2}^{n} \frac{(-t)^k}{k!(k-1)!}
\sum_{m=0}^{n-k} \frac{(m+1)(2(m+1)^2+1)}{3}
\prod_{i=1}^{k-1}(n-k-m+i)
 \right] w^{n}. 
\endaligned 
\]
On the right-hand side, we have 
\[
7 + \sum_{m=2}^{n-1}\frac{(m+1)(2(m+1)^2+1)}{3} 
= \frac{n(n+1)(n^2+n+1)}{6}, 
\]
\[
\prod_{i=1}^{k-1}(n-k-m+i)  = \frac{(n-m-1)!}{(n-k-m)!}, 
\]
and
\[
\aligned 
\sum_{m=0}^{n-k} & \frac{(m+1)(2(m+1)^2+1)}{3} \frac{(n-m-1)!}{(n-k-m)!}\\
& = \frac{1}{k(k+1)(k+2)(k+3)} \, \frac{(n+1)!}{(n-k)!}
\left(\Bigl(k - \frac{4(n+1)-1}{2} \Bigr)^2 + 2(n+1) + \frac{7}{4} \right) 
\endaligned 
\]
by elementary calculations. Therefore, 
\[
\aligned 
\sum_{n=0}^{\infty} & P_n(t) \,w^n 
= 1 + (6-t)w + \left(19-7t + \frac{t^2}{2}\right) w^2  \\ 
& \quad + \sum_{n=3}^{\infty}\left[ 
\frac{(n+1)(2(n+1)^2+1)}{3} -t
 \frac{n(n+1)(n^2+n+1)}{6}
\right. \\
& \quad \left.
\qquad \qquad + 
\sum_{k=2}^{n} \frac{(-t)^k}{k!(k+3)!}\frac{(n+1)!}{(n-k)!}
\left(\Bigl(k - \frac{4(n+1)-1}{2} \Bigr)^2 + 2(n+1) + \frac{7}{4} \right)
 \right] w^{n}. 
\endaligned 
\]
In the sum for $n \geq 3$, we find that the first and second terms 
are equal to cases of $k=0$ and $k=1$, respectively, 
for the formula of coefficients of  $t^k$ for $k \geq 2$. 
Hence, we obtain the relations \eqref{Eq_801} and \eqref{Eq_802}. 

We second prove \eqref{Eq_803}. 
By the changing of variables $w=X/(1-X)$ in \eqref{Eq_804}, we have 
\begin{equation} \label{Eq_806}
\frac{1}{(1-2X)^2(1-X)^2}
\sum_{n=0}^{\infty} \lambda_{n+1} \left( \frac{X}{X-1}\right)^n
= 
\sum_{n=0}^{\infty} 
\mu_n \frac{X^n}{n!}.
\end{equation}
Applying $\lambda_n=\sum_\rho (1-(1-\rho^{-1})^n)$ in \cite[(1.4)]{Li97} 
to the left-hand side, 
\begin{equation} \label{Eq_807}
\frac{1}{(1-2X)^2(1-X)^2}
\sum_{n=0}^{\infty} \lambda_{n+1} \left( \frac{X}{X-1}\right)^n
= 
\sum_{n=0}^{\infty} \left[ 
\sum_{k=1}^{n+1} k\, 2^{k-1} \sum_\rho \rho^{-(n-k+2)}
\right] X^n. 
\end{equation}
On the other hand, $\lambda_n=\sum_\rho (1-(1-\rho^{-1})^n)$ gives
\begin{equation} \label{Eq_808}
\sum_\rho \rho^{-m} = \sum_{k=1}^{m} (-1)^{k+1}\binom{m}{k} \lambda_k 
\end{equation}
in an inductive procedure. 
Substituting \eqref{Eq_808} into \eqref{Eq_807} 
and comparing its coefficients with \eqref{Eq_806}, we obtain the relation \eqref{Eq_803}. 

Since both relations \eqref{Eq_801}--\eqref{Eq_802} and \eqref{Eq_803} 
are obtained by different power series expansions of \eqref{Eq_804}, 
it is clear that they are inverse to each other.
\end{proof}

If the RH is true, both $\{\lambda_n\}$ and $\{\mu_n\}$ are positive, 
but unfortunately in \eqref{Eq_801}--\eqref{Eq_802} and \eqref{Eq_803}, 
the positivity of one does not directly lead to the positivity of the other. 
However, as an application of the relation \eqref{Eq_801}--\eqref{Eq_802}, 
the following recurrence formula of the moments is obtained.

\begin{theorem} 

Let $\{a_j\}_{j \geq 1}$ be coefficients 
of the power series expansion 
$\xi(1/(1-w))=1+\sum_{j=1}^{\infty} a_j w^j$.  
For $n \in \Z_{>0}$ and $k \in \Z_{\geq 0}$ with $n \geq k$, 
we set
\begin{equation*} 
\aligned 
b_{n,k} &:= \frac{n!}{k!(k+3)!} 
\left[
\frac{(n+1)!}{(n-k)!}
\Bigl(
\Bigl(k - \frac{4n+3}{2} \Bigr)^2 + 2n + \frac{15}{4}\Bigr)
 \right. \\
&
\qquad \qquad \qquad \qquad \qquad \left. 
+
\sum_{j=k+1}^{n}  
\frac{j!}{(j-k-1)!}
\Bigl((2j-k)^2+k+2\Bigr) \, a_{n-j+1} 
 \right]. 
\endaligned 
\end{equation*}
Then, all $b_{n,k}$ are positive and the recurrence relation 
\begin{equation} \label{Eq_809}
(-1)^{n}\mu_n =  (n+1)!\,a_{n+1} -\sum_{k=0}^{n-1} (-1)^{k}b_{n,k} \,\mu_k 
\end{equation}
holds for all non-negative $n$. 
\end{theorem}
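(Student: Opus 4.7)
The natural vehicle is the Newton identity connecting the Li coefficients to the Taylor coefficients $a_j$ of $F(w) := \xi(1/(1-w)) = 1 + \sum_{j \geq 1}a_j w^j$. Indeed, \eqref{Eq_114} says $\sum_{n \geq 0}\lambda_{n+1}w^n = F'(w)/F(w)$, so comparing coefficients in $F'(w) = F(w)\sum_n\lambda_{n+1}w^n$ yields
\begin{equation*}
(n+1)a_{n+1} = \lambda_{n+1} + \sum_{j=1}^{n} a_{n-j+1}\lambda_j, \qquad n \geq 0.
\end{equation*}
The plan is to substitute \eqref{Eq_802} (which a direct check shows is valid for all $j \geq 1$, reproducing \eqref{Eq_801} for $j = 1,2,3$) into every $\lambda_j$ on the right, multiply through by $n!$ so the factorials align with those in $b_{n,k}$, and transpose the $\mu_n$-term to the left.

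The only $\mu_n$-contribution comes from the $k=n$ term of \eqref{Eq_802} applied to $\lambda_{n+1}$. A direct computation
\[
\Bigl(n - \tfrac{4(n+1)-1}{2}\Bigr)^{2} + 2(n+1) + \tfrac{7}{4} = (n+2)(n+3),
\qquad
\frac{n!\,(n+1)!\,(n+2)(n+3)}{n!\,(n+3)!\cdot 0!} = 1,
\]
shows this coefficient equals $(-1)^n$, giving the $(-1)^n\mu_n$ of \eqref{Eq_809}. For $0 \leq k \leq n-1$, the coefficient of $\mu_k$ splits into (i) the $k$-th term of \eqref{Eq_802} applied to $\lambda_{n+1}$, which produces $(n+1)!/((n-k)!) \cdot [(k-(4n+3)/2)^2 + 2n + 15/4]$ inside the bracket of $b_{n,k}$, and (ii) the $k$-th term of \eqref{Eq_802} applied to $\lambda_j$ for $k+1 \leq j \leq n$, weighted by $a_{n-j+1}$, giving the summation over $j$. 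The elementary identity
\[
\Bigl(k - \tfrac{4j-1}{2}\Bigr)^{2} + 2j + \tfrac{7}{4} = (2j-k)^{2} + k + 2,
\]
verified by expanding both sides, converts the kernel of \eqref{Eq_802} into the shape displayed in $b_{n,k}$, and collecting completes the derivation of \eqref{Eq_809}.

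For the positivity of $b_{n,k}$, the quadratic factors $(k-(4n+3)/2)^{2}+2n+15/4$ and $(2j-k)^{2}+k+2$ are manifestly positive, so it suffices to show $a_m > 0$ for every $m \geq 1$. I would invoke P\'{o}lya's representation $\xi(1/2+w) = 2\int_0^\infty \omega(u)\cosh(wu)\,du$ with $\omega(u) > 0$ (arising from Jacobi's theta identity), which together with $(\cosh(wu))^{(m)}|_{w=1/2} = u^m\cosh(u/2)$ or $u^m\sinh(u/2)$ immediately yields $\xi^{(m)}(1) > 0$ for every $m \geq 0$. Expanding
\[
\xi\!\left(\tfrac{1}{1-w}\right) = \sum_{m \geq 0}\frac{\xi^{(m)}(1)}{m!}\left(\frac{w}{1-w}\right)^{m}
\]
and reading off the coefficient of $w^j$ yields $a_j = \sum_{m=1}^{j}\binom{j-1}{m-1}\xi^{(m)}(1)/m! > 0$. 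The only real difficulty in the proof is bookkeeping: tracing the two sources of $\mu_k$-contributions to the two summands inside the bracket of $b_{n,k}$, and noticing the quadratic identity that normalises the kernel of \eqref{Eq_802}; no new analytic input is needed.
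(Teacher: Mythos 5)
Your proof is correct and follows essentially the same route as the paper: substitute the expression \eqref{Eq_802} for each $\lambda_j$ into Li's recurrence $\lambda_{n+1} = (n+1)a_{n+1} - \sum_{j=1}^{n} a_{n-j+1}\lambda_j$, isolate the $\mu_n$-term, multiply by $n!$, and collect coefficients of $\mu_k$ to obtain $b_{n,k}$, with positivity of $b_{n,k}$ reduced to positivity of $a_j$. The only differences are cosmetic: the paper simply cites \cite[p.~327]{Li97} for both the recurrence \eqref{Eq_810} and for $a_j>0$, whereas you rederive the recurrence from $F'/F$ and reprove $a_j>0$ via P\'olya's integral representation of $\xi$; and you observe that \eqref{Eq_802} already subsumes \eqref{Eq_801} for $j=1,2,3$, avoiding the paper's case split between $n\le 2$ and $n\ge 3$. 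Your explicit quadratic identity $(k-\tfrac{4j-1}{2})^2+2j+\tfrac{7}{4}=(2j-k)^2+k+2$ and the coefficient check $\frac{(n+1)!(n+2)(n+3)}{(n+3)!}=1$ are both correct.
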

\begin{proof} 
The positivity of $b_{n,k}$ follows from the positivity of $a_j$ 
in \cite[p. 327]{Li97}. 
We recall that the recurrence formula
\begin{equation} \label{Eq_810}
\lambda_{n+1} = (n+1)a_{n+1} - \sum_{j=1}^{n}  a_{n-j+1} \lambda_j
\end{equation}
holds for every non-negative integer $n$ in \cite[p. 327]{Li97}. 
Substituting \eqref{Eq_801} to \eqref{Eq_810}  
and then calculating a little, 
\eqref{Eq_809} is obtained for $n=1$ and $2$. 
For $n \geq 3$, 
substituting \eqref{Eq_801} and \eqref{Eq_802} for both sides of \eqref{Eq_810}, 
then moving the terms other than $(-1)^n\mu_n/n!$ on the left-hand side 
to the right-hand side, 
and finally grouping the right-hand side concerning moments $\mu_k$, we obtain \eqref{Eq_809}. 
\end{proof}

%
\section{Relation with strings under the RH} \label{Section_9}
%

For a Kre\u{\i}n string $S[m, L]$ which consists of 
$0<L \leq \infty$ 
and a 
right-continuous 
nondecreasing 
non-negative function $m(x)$ on $[0,L)$ with $m(0_-)=0$, 
we take solutions $\phi(x,z)$ and $\psi(x,z)$ 
of the string equation 
$dy'(x) + z y(x) dm(x)=0$ on $[0,L)$ 
satisfying the initial condition $\phi'(0,z)=\psi(0,z)=0$, $\phi(0,z)=\psi'(0,z)=1$. 
Then the Titchmarsh--Weyl function $q(z) = \lim_{x \to L} \psi(x,z)/\phi(x,z)$ 
exists and belongs to the subclass $\mathcal{N}_S$ 
of the Nevanlinna class $\mathcal{N}$ consisting of $q(z)$ such that 
\[
q(z) = b + \int_{0}^{\infty} \frac{d\sigma(\lambda)}{\lambda-z} 
\]
for some $b \geq 0$ and a measure $\sigma$ on $[0,\infty)$ 
with $\int_{0}^{\infty} d\sigma(\lambda)/(\lambda+1) < \infty$. 
Kre\u{\i}n \cite{Kr52} proved that the correspondence 
$S[m,L] \mapsto \mathcal{N}_S$ is bijective. 

Using the meromorphic function $Q_{\xi}(z)$ in \eqref{Eq_107}, we define 
\[
q_\xi(z) = \frac{Q_\xi(\sqrt{z})}{\sqrt{z}}
\]
and suppose that the RH is true.  
Then, both $Q_\xi(z)$ and $q_\xi(z)$ belong to $\mathcal{N}$. 
Moreover, we have 
\[
\aligned 
q_\xi(z) 
& 
=  \frac{i}{\sqrt{z}} \frac{\xi'}{\xi}\left(\frac{1}{2}-i\sqrt{z}\right) 
= \sum_{\gamma>0} \frac{2}{\gamma^2-z} 
= b + \int_{0}^{\infty} \frac{d\sigma(\lambda)}{\lambda-z} \quad (b=0)
\endaligned 
\]
where $\sigma$ is a measure on $[0,\infty)$ supported on points $\gamma^2$. 
In other words, $q_\xi \in \mathcal{N}_S$. 
Hence, there exists a string $S[m_\xi,L_\xi]$ corresponding to $q_\xi(z)$. 
Such string is called Zeta string in Kotani~\cite[\S3.2]{Ko21}, 
where he proved that $m_\xi(x) \sim 4x(\log x)^{-2}$ as $x \to 0+$. 

%
\section{Mean Periodicity} \label{Section_10}
%

Here we explain that the screw function $g(t)$ 
in \eqref{Eq_101} and \eqref{Eq_108} is mean-periodic 
even without assuming the RH. 
Let 
\[
\eta(t):=  
2 e^{-t/2}\sum_{n=1}^{\infty} \bigl( 2\pi^2 n^4 e^{-4t} - 3 \pi n^2 e^{-2t} \bigr)
\exp(-\pi n^2 e^{-2t}). 
\]
Then $\xi(1/2-iz) = \widehat{\eta}(z)$ for all $z \in \C$ (\cite[\S10.1]{Tit86}). 
In addition, $-iz \, \xi(1/2-iz) = \widehat{\eta'}(z)$ for all $z \in \C$, 
since $\eta(t)$ is smooth and decays faster than any exponential as $|t| \to +\infty$. 
Therefore, by using \eqref{Eq_103} and putting $\phi=\eta'$, we have 
\[
(g \ast \phi)(t) 
= \sum_{\gamma} \frac{1}{\gamma^2} (e^{i\gamma t} \widehat{\phi}(-\gamma) - \widehat{\phi}(0) ) =0.
\]
Hence, if we chose a suitable function space $X$ consisting of functions on $\R$ 
such that $g \in X$ and $\phi=\eta'$ belongs to the dual space $X^\ast$, 
then $g$ is a $X$-mean periodic function 
(see \cite[\S2]{FRS12} for a quick overview of mean-periodic functions).  
In fact, we can chose the spaces $\mathcal{C}_{\rm exp}^\infty(\R)$ 
and $\mathbf{S}(\R)^\ast$ in \cite{FRS12} 
as such a space $X$ by Theorem \ref{Thm_1_1} (3). 
For the same reason that the convolution equation $g\ast \phi=0$ holds, 
if we chose the space $X$ such that the dual space $X^\ast$ contains the space 
\[
\mathcal{Z}:=
\left\{\left.
\frac{d}{dt}\left( 
e^{-t/2}\sum_{n=1}^{\infty} f(ne^{-t}) \right) 
~\right|~ f \in S(\R),\, f(x)=f(-x),\,f(0)=\widehat{f}(0)=0
\right\}
\]
like $\mathcal{C}_{\rm exp}^\infty(\R)$ 
and $\mathbf{S}(\R)^\ast$, 
then the space $\mathcal{T}(g)\subset X$ spanned by all translations $g(t-r)$ by $r \in \R$ 
is orthogonal to $\mathcal{Z}$ with respect to the pairing $(\cdot,\cdot): X \times X^\ast \to \C$. 
The space $\mathcal{Z}$ consists of the first derivatives of functions 
in the space introduced by A. Connes  in \cite[Appendix I]{Co99} (see also \cite[\S6.1]{CC21b} 
and R. Meyer \cite{Me05}) 
to study the spectral realization of the zeros of $\xi(s)$. 
In the above sense, the screw function $g$ ``generates'' an orthogonal complement 
(which, depending on $X$, may be a subspace of this by the existence of trivial zeros of $\zeta(s)$) 
of Connes' space. This is a situation similar to \cite[Theorem 3.1]{Su12}. 

Independent of the choice of function space, 
the convolution equation $g\ast \phi=0$ yields the integral representation 
\[
\frac{\widehat{g\ast \phi^+}(z)}{\widehat{\phi^+}(z)} 
= -\frac{\widehat{g\ast \phi^-}(z)}{\widehat{\phi^-}(z)} = \frac{1}{z^2}\, \frac{\xi'}{\xi}\left(\frac{1}{2}-iz \right)
\]
that holds for all $z \in \C$ by the Fourier--Carleman transform (\cite[Definition 2.8]{FRS12}), 
where $\phi^+(t)=\phi(t)$ if $t \geq 0$ and $=0$ otherwise 
and $\phi^-(t)=0$ if $t \geq 0$ and $=\phi(t)$ otherwise.  

Although the screw function $g(t)$ attached to $\xi(s)$ 
is outside the framework of mean-periodic functions studied in \cite{FRS12}, 
we may regard the positivity in Theorem \ref{Thm_1_7} 
an example of the single sign property of mean-periodic functions in \cite[\S4.3]{FRS12}, 
and the equality obtained from \eqref{Eq_101} and \eqref{Eq_103} 
is an analog of the summation formula \cite[Corollary 4.6]{FRS12}. 

%
\section{Variants of $\Psi(t)$}  \label{Section_11}
%

We consider ``shifted'' variants of $\Psi(t)$. 
For a real number $\omega$, we define $\Psi_\omega(t)$ by
\begin{equation} \label{Eq_1101}
\Psi_\omega(t)
:= e^{-\omega t} \Psi(t) 
+ 2\omega \int_{0}^{t} e^{-\omega u} \Psi(u) \, du 
+ \omega^2 \int_{0}^{t} (t-u)e^{-\omega u} \Psi(u) \, du  
\end{equation}
for $t>0$ and extend it as an even function on $\R$. 
Then we have $\Psi_0(t)=\Psi(t)$ and 
\begin{equation} \label{Eq_1102}
\int_{0}^{\infty} \Psi_\omega(t) \,e^{izt} \, dt 
=  -\frac{1}{z^2}\frac{\xi'}{\xi}(\tfrac{1}{2}+\omega-iz), \quad 
\Im(z)>1/2-\omega
\end{equation}
by \eqref{Eq_102} and a little calculation.  
Therefore a nontrivial estimate of $\Psi_\omega(t)$ 
leads to an expansion of the zero-free region of $\xi(s)$ 
as in the case of $\Psi(t)$. 

On the right-hand side of \eqref{Eq_1102}, 
the expansion 
\begin{equation*} 
\Im\left[ i\frac{\xi'}{\xi}(s+\omega) \right] 
= \sum_{\rho} \frac{\Re(s+\omega)-\Re(\rho)}{|s+\omega-\rho|^2} 
\end{equation*}
shows that $i(\xi'/\xi)(1/2+\omega-iz)$ belongs to the class $\mathcal{N}$ 
if  $\xi(s)\not=0$ for $\Re(s)>1/2+\omega$, 
where $\rho$ runs over all zeros of $\xi(s)$ counting with multiplicity.   
Therefore, if $\xi(s)\not=0$ for $\Re(s)>1/2+\omega$, 
the function $\Psi_\omega(t)$ is non-negative 
by \eqref{Eq_1102} and \cite[Satz 5.9]{KrLa77} 
as in the case of $g(t)=-\Psi(t)$. 
Furthermore, 
from formula \eqref{Eq_1102} and the fact that $\xi(s)\not=0$ 
on the positive real line, if we use the same result of Laplace transforms 
as in the proof of Theorem \ref{Thm_1_7}, 
the following seemingly milder equivalent condition is obtained.

\begin{theorem}
Let $\omega \in \R$. Then $\xi(s)\not=0$ for $\Re(s)>1/2+\omega$ 
if and only if there exists $t_0>0$ such that 
$\Psi_\omega(t)$ is non-negative when $t \geq t_0$. 
\end{theorem}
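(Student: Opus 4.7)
The plan is to treat the two implications separately, with sufficiency being the substantial direction.

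For necessity, I would appeal directly to the discussion immediately preceding the theorem. Assuming $\xi(s)\neq 0$ for $\Re(s)>1/2+\omega$, the function $i(\xi'/\xi)(1/2+\omega-iz)$ lies in the Nevanlinna class $\mathcal{N}$, and satisfies the growth condition \eqref{Eq_106} unconditionally (by the same Dirichlet-series and digamma asymptotics used for $Q_\xi$). The Kre\u{\i}n--Langer correspondence \cite[Satz 5.9]{KrLa77}, combined with the Fourier representation \eqref{Eq_1102}, then forces $-\Psi_\omega \in \mathcal{G}_\infty$. Since $\Psi_\omega(0)=0$ (immediate from \eqref{Eq_1101}), the diagonal non-negativity $G_{-\Psi_\omega}(t,t)\geq 0$ gives $\Psi_\omega(t)\geq 0$ for every real $t$, which is strictly stronger than the required eventual non-negativity.

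For sufficiency, I would imitate the proof of Theorem \ref{Thm_1_7} but with the Laplace transform taken only over the tail $[t_0,\infty)$. Setting $s=-iz$, formula \eqref{Eq_1102} becomes
\[
\int_0^\infty \Psi_\omega(t)\,e^{-st}\,dt \;=\; \frac{1}{s^2}\,\frac{\xi'}{\xi}\!\left(\tfrac{1}{2}+\omega+s\right)
\]
on the half-plane where the left-hand side converges. The truncation $\int_0^{t_0}\Psi_\omega(t)e^{-st}\,dt$ is entire in $s$, so the tail transform
\[
H(s):=\int_{t_0}^{\infty}\Psi_\omega(t)\,e^{-st}\,dt
\]
admits a meromorphic continuation to $\C$ whose singularities coincide with those of $\frac{1}{s^2}(\xi'/\xi)(1/2+\omega+s)$, namely $s=0$ together with $s=\rho-1/2-\omega$ for nontrivial zeros $\rho$ of $\xi$.

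The decisive step is the application of Widder's theorem \cite[Theorem~5b, Chap.~II]{Wi41} to the non-negative function $\Psi_\omega(t)\mathbf{1}_{[t_0,\infty)}(t)$: its Laplace transform must have a singularity on the real axis at its abscissa of convergence $\sigma_c$. Because the Riemann xi-function is strictly positive on $\R$ (combining $\xi(s)=\xi(1-s)$ with the sign analysis of $\zeta$ via $(1-2^{1-s})\zeta(s)=\sum(-1)^{n-1}n^{-s}$ and positivity of $\Gamma(s/2)$ and $s(s-1)$ on the appropriate intervals), the function $\xi(1/2+\omega+s)$ has no real zeros, and the only real singularity of the meromorphic continuation of $H$ is $s=0$. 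Widder therefore forces $\sigma_c=0$, so $H$ extends analytically to $\Re(s)>0$, and consequently so does $\frac{1}{s^2}(\xi'/\xi)(1/2+\omega+s)$. This yields $\xi(1/2+\omega+s)\neq 0$ for $\Re(s)>0$, which is the desired conclusion.

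The main delicate point is precisely the verification that $\xi$ has no real zeros, so that the only obstruction on the real axis to analyticity is the elementary pole at $s=0$ contributed by the factor $1/s^2$. A secondary subtlety is that for $\omega>1/2$ the function $\Psi_\omega(t)$ exhibits linear growth in $t$ rather than exponential decay, so some care is needed regarding the domain of absolute convergence of \eqref{Eq_1102}; however, Widder applies uniformly once the non-negative tail is isolated from the compactly supported part, so this affects only the bookkeeping rather than the structural logic of the argument.
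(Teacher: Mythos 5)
Your proof is correct and follows the paper's intended route: necessity via the Kre\u{\i}n--Langer correspondence applied to $-\Psi_\omega$, and sufficiency via Widder's singularity theorem for Laplace transforms of non-negative functions combined with the non-vanishing of $\xi$ on the real axis. The paper only sketches this argument in the paragraph preceding the statement, and your truncation of the Laplace integral at $t_0$ (isolating the non-negative tail so that Widder applies while the compactly supported piece contributes an entire function) is precisely the detail needed to exploit the ``eventually non-negative'' hypothesis; the only minor imprecision is that Widder gives $\sigma_c \le 0$ rather than $\sigma_c = 0$, which is all that is needed.
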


The above result shows that 
$\Psi_\omega(t) \geq 0$ unconditionally if $\omega \geq 1/2$, 
because it is well-known that $\xi(s)\not=0$ for $\Re(s)>1$. 
In contrast, $\Psi_\omega(t)$ changes the sign infinitely many 
if $\omega$ is negative, 
because it is known that $\xi(s)$ has infinitely many zeros 
on the critical line $\Re(s)=1/2$. 
Similar to the relation between $\Psi(t)$ and $\Psi_\omega(t)$, we have 
\begin{equation*} 
\Psi_{\omega+\eta}(t)
= e^{-\eta t} \Psi_\omega(t) 
+ 2\eta \int_{0}^{t} e^{-\eta u} \Psi_\omega(u) \, du 
+ \eta^2 \int_{0}^{t} (t-u)e^{-\eta u} \Psi_\omega(u) \, du
\end{equation*}
for two reals $\omega$ and $\eta$. 
Therefore, the nonnegativity of $\Psi_\omega(t)$ on some interval $[0,t_0)$ 
implies the nonnegativity of 
$\Psi_{\omega+\eta}(t)$ on the same range when $\eta>0$. 
\medskip

For $t>0$, the analogs of \eqref{Eq_101} and \eqref{Eq_103} are as follows:
\[
\aligned 
\Psi_\omega(t) 
& = \sum_{\gamma}\frac{1}{(\gamma^2+\omega^2)^2} 
\Bigl[
\omega t (\gamma^2+\omega^2)
+
(\gamma^2-\omega^2) \\
& \qquad \qquad \qquad 
-
e^{-\omega t} \cdot (\gamma^2 - \omega^2) \cos(\gamma t) 
-
e^{-\omega t} \cdot 2\gamma \omega \sin(\gamma t)
\Bigr] \\
& = 4
\left[
\frac{e^{(\frac{1}{2}-\omega)t}}{(1-2\omega)^2}
+
\frac{e^{-(\frac{1}{2}+\omega)t}}{(1+2\omega)^2} 
-
\frac{4-2(1-\omega t)(1-4\omega^2)}{(1-4\omega^2)^2} 
\right] \\ 
& \quad + 
\frac{t}{2}\left[ \psi
\left(\frac{1}{4}+\frac{\omega}{2}\right) - \log \pi \right] \\
&\quad  + 
\frac{1}{4}\left[ \psi^{(1)}\left(\frac{1}{4}+\frac{\omega}{2}\right) 
- e^{-(\frac{1}{2}+\omega)t}\Phi\left(e^{-2t},2,\frac{1}{4}+\frac{\omega}{2}\right) \right] \\
&\quad  - \sum_{n \leq e^t} \frac{\Lambda(n)}{n^{\frac{1}{2}+\omega}}(t-\log n), 
\endaligned 
\]
where $\psi^{(k)}(s) = \frac{d^{k+1}}{ds^{k+1}}\log \Gamma(s)$ and $\psi(s)=\psi^{(0)}(s)$. 
This is obtained by calculating the right-hand side of \eqref{Eq_1101} 
using \eqref{Eq_101} and \eqref{Eq_103}.  
We have $\Psi_\omega(0)=0$ as well as $\Psi(t)$. 

As a special case, 
\[
\aligned 
\Psi_{1/2}(t) 
&= \,\frac{1}{2}(t+1)^2 - \frac{3}{2} + e^{-t}
\\ 
& \quad - 
\frac{t}{2}( \gamma_0 + 2\log 2 + \log \pi ) 
+
\frac{1}{4}\left[ \frac{\pi^2}{2} 
- e^{-t}\Phi\left(e^{-2t},2,\frac{1}{2}\right) \right] \\
&\quad  - \sum_{n \leq e^t} \frac{\Lambda(n)}{n}(t-\log n) 
\endaligned 
\]
for $t>0$. In this case, $g_{1/2}(t):=-\Psi_{1/2}(t)$ belongs to the class $\mathcal{G}_\infty$, 
because $Q_{1/2}(z)=i(\xi'/\xi)(1-iz)$ belongs to $\mathcal{N}$ 
and satisfies \eqref{Eq_106}. 
\bigskip

%

%
\end{document}